\newcommand{\inv}{^{\raisebox{.2ex}{$\scriptscriptstyle-1$}}}
\definecolor{brightmaroon}{rgb}{0.76, 0.13, 0.28}
\newtheorem{theorem}{Theorem}[section]
\newtheorem{proposition}[theorem]{Proposition}
\newtheorem{lemma}[theorem]{Lemma}
\newtheorem{corollary}[theorem]{Corollary}
\theoremstyle{definition}
\newtheorem{definition}[theorem]{Definition}
\newtheorem{example}[theorem]{Example}
\newtheorem{remark}[theorem]{Remark}
\numberwithin{equation}{section}
\begin{document}

\author{Pubali Sengupta}
\address{
	Department of Mathematics, Jadavpur University, 188, Raja Subodh Chandra Mallick Rd,
	Jadavpur, Kolkata, West Bengal 700032, India.}
\email{pubalis.math.rs@jadavpuruniversity.in}
	
\author{Amartya Goswami}
\address{[1]  Department of Mathematics and Applied Mathematics, University of Johannesburg, P.O. Box 524, Auckland Park 2006, South Africa. [2]  National Institute for Theoretical and Computational Sciences (NITheCS), South Africa.}
\email{agoswami@uj.ac.za}

\author{Pronay Biswas}

\address{Department of Mathematics, Jadavpur University, 188, Raja Subodh Chandra Mallick Rd,
	Jadavpur, Kolkata, West Bengal 700032, India}
\email{pronayb.math.rs@jadavpuruniversity.in, pronaybiswas1729@gmail.com }
	
\author{Sujit Kumar Sardar}
\address{Department of Mathematics, Jadavpur University, 188, Raja Subodh Chandra Mallick Rd,
Jadavpur, Kolkata, West Bengal 700032, India.}
\email{sujitk.sardar@jadavpuruniversity.in}
	
\title{On the Subtractive Ideal Structure of Commutative Semirings}

\date{}

\subjclass{16Y60.}


\keywords{semiring, strongly irreducible ideal, radical, $k$-ideal, subtractive ideal, additively idempotent semiring.}

\begin{abstract}
In the theory of commutative semirings, the lack of additive inverses creates a structural divergence between ideals and congruences that does not exist in ring theory. The aim of this article is to  restore critical ideal-theoretic properties via the subtractive property. We first prove a subtractive analogue of Krull’s existence theorem, guaranteeing the existence of $k$-prime ideals disjoint from multiplicative sets. We show that in arithmetic semirings, the distinction between $k$-irreducible and $k$-strongly irreducible ideals vanishes, a coherence that we show is preserved under localisation. We investigate the structural properties and coincidence phenomena among associated subclasses of $k$-ideals in Laskerian semirings, von Neumann regular semirings, unique factorisation semidomains, principal ideal semidomains, and weakly Noetherian semirings. 
Finally, within the framework of additively idempotent semirings, we tether subtractive ideal-theoretic structures to underlying order-theoretic constraints, thereby obtaining new characterizations of $k$-prime and $k$-semiprime ideals. In that process, we also 
establish that every absolutely $k$-prime ideal is $k$-prime and every $k$-maximal ideal is absolutely $k$-prime.
\end{abstract}
\maketitle
\tableofcontents 

\section{Introduction} 
Since Vandiver introduced \emph{rings without negatives} \cite{Vandiver34, Vandiver39}, the study of semirings has continued to evolve. Although semirings are often viewed as a natural generalisation of both rings and distributive lattices, they have yet to settle into a definitive position between these two theories.
Throughout, the term \emph{semiring} tacitly refers to a commutative semiring $(S,+,0,\cdot,1)$ with multiplicative identity $1$ and an element $0$, which is the additive identity and also multiplicatively \emph{absorbing}, that is, $a\cdot 0=0$, for all $a\in S$. 
In contrast to the ring-theoretic case, ideals and congruences in semirings are no longer in bijective correspondence. For example, take the semiring of non-negative integers, following Golan \cite{Golan}, we denote this by $(\mathds{N},+,0)$, and fix two natural numbers $a$, $b\in \mathds{N}\setminus \{0,1\}$.
Let $\mathcal{D}:=\{(x,y)\in \mathds{N}\mid a\ \text{divides}\ |x-y|,\ \text{and}\ x>b,y>b\}$. Then $k=\mathcal{D}\cup \{(x,x)\mid x\in \mathds{N}\}$ is a congruence on $\mathds{N}$. Albeit the zeroth class $[0]_k$ is the singleton $\{(0,0)\}$, which shows that we cannot recover the original congruence from the zeroth class. \emph{De facto}, there are semirings which are ideal-simple but not congruence-simple, see \cite[Section 2]{Nam21}.  Therefore, in the sense of Gumm and Ursini \cite{GummUrsini}, semirings are not \emph{ideal determined}. As a generalisation of rings, semirings support a wider array of ideals (\textit{cf.} Definition \ref{Def: k-ideal and strong ideal}) and congruences (\textit{cf.} Definition \ref{Def: k-congruence}), including structures that are either trivial or do not arise in classical ring theory. For example, the class of $k$-ideals (also known as subtractive ideals) is trivial in rings, since each ideal of a ring is a $k$-ideal. Moreover, the class of \emph{strong ideals} in rings is incoherent. The ideal theory of semirings has been extensively studied; see \cite{Atani08, AtaniGarfami13, Nasehpour19, Iseki56, LaGrassa95, Goswami23, Goswami25, GoswamiDube24, GoswamiMaatallah25, BiswasBagSardarFilomat24}.
Despite its early appearance in an AMS notice \cite{Henriksen58}, Henriksen’s notion of a $k$-ideal has proved to be a persistent and productive idea in semiring theory. For a detailed account of the historical development of semiring theory, we refer the reader to G\l azek \cite{Glazek02}. Several authors, including Katsov, Abuhlail, Nam, and Il'in, have focused on characterizing algebraic properties of semirings admitting a rich class of $k$-ideals; see \cite{Il'inKatsovNam17, Abuhlail22, Abuhlail18, Katsov09, Katsov11}.
The study of the class of $k$-ideals appears in the earlier works \cite{LaTorre65, SenAdhikari92, SenAdhikari93, Han15, Han21, JunRayTolliver22, WeinertSenAdhikari96} and has been taken up again in more recent work \cite{GoswamiDube24}. This work is a sequel to \cite{GoswamiDube24}. We study the classes of $k$-prime, $k$-semiprime, $k$-primary, $k$-irreducible, and $k$-strongly irreducible ideals, together with their associated congruence classes, both in general semirings and across several distinguished classes of semirings. For the corresponding of ideals in rings we refer to \cite{AtiyahMacdonald, Azizi08, Schwartz16}.
The so-called ``exchange principal'' (\textit{cf.}\cite[Proposition 3.5]{JunRayTolliver22} and \cite[Proposition 3.11, Proposition 3.21]{GoswamiDube24}), namely
\begin{center}
 $k$-$X$-ideal $\iff $ $k$-ideal $+$ $X$-ideal,   
\end{center}
is true for $k$-prime, $k$-semiprime, $k$-irreducible and $k$-strongly irreducible ideals, which make these classes more tractable. 

Let us now retrace the main results established in this paper. 
In Section~\ref{Section 3: Some results on k-ideals on semirings}, we begin by examining examples and non-examples of subtractive and strongly subtractive semirings. We show that the ideal lattice of a strongly subtractive semiring is distributive, while that of a subtractive semiring is known to be modular. Consequently, strongly subtractive semirings provide a new source of examples of arithmetic semirings (see, Definition~\ref{Definition: arithmetic semiring}). In Proposition~\ref{krull's theorem}, we establish a subtractive analogue of Krull’s theorem for semirings, guaranteeing the existence of $k$-prime ideals. Although the classes of $k$-prime and $k$-primary ideals are shown to be distinct in Remark~\ref{Remark: ideal classes are different}, however, Theorem~\ref{loc14} shows that they coincide in von Neumann regular semirings. We also record a correspondence between $k$-strongly irreducible ideals and $i_k$-systems. The section concludes by proving that, within the class of arithmetic semirings, every $k$-irreducible ideal is in fact $k$-strongly irreducible, a phenomenon that does not occur in general semirings. 

Section~\ref{Section 4: k-ideals of quotient semirings} investigates the structural behaviour of distinguished subclasses of $k$-ideals under semiring homomorphisms and analyses their structural persistence in quotient semirings. We begin by establishing that the class of $k$-prime ideals is invariant under $k$-contraction. An analogous stability result is obtained for $k$-irreducible ideals under a suitably restricted class of semiring homomorphisms (Proposition~\ref{Stableness of k-strongly irreducible ideals}). In Theorem~\ref{k-strongly irreducible ideals in quotient semirings}, we prove that $k$-strongly irreducible ideals descend to quotient semirings, and that the converse implication holds in the arithmetic setting. This result strengthens the corresponding theorem of Atani and Garfami; see \cite[Theorem~2.7]{AtaniGarfami13}. Finally, we show that the $k$-irreducibility of an ideal can be detected purely at the level of the quotient, namely, an ideal is $k$-irreducible if and only if the zero ideal of the associated quotient semiring is $k$-irreducible.

In Section~\ref{Section 5: k-ideals under localisation}, we investigate the behaviour of \(k\)-ideals and their subclasses under localisation of semirings. The central result, Theorem~\ref{loc}, extends Azizi’s correspondence to the semiring setting by establishing a bijection between \(k\)-proper strongly irreducible ideals of \(T\inv S\) and those \(k\)-proper strongly irreducible ideals of \(S\) that are disjoint from \(T\). We then analyse the stability of \(k\)-prime ideals under extension and contraction along semiring homomorphisms, describing the induced maps on \(k\)-spectra and their injectivity and surjectivity properties (Proposition~\ref{contraction of k-prime ideals}). Finally, we examine the behaviour of \(k\)-primary and \(k\)-strongly irreducible ideals under localisation and contraction (Propositions~\ref{loc6} and~\ref{loc7}). Taken together, these results show that localisation preserves and reflects essential ideal-theoretic structure in the subtractive framework.

The Section~\ref{Section 6: k-ideals and its variants in special semirings}, is devoted to a detailed analysis of $k$-ideals and their variants in several important classes of semirings, with particular emphasis on $k$-strongly irreducible ideals and their relationship with $k$-prime and $k$-primary ideals. The section is divided into two complementary parts.

In the first part, we study the behaviour of $k$-strongly irreducible ideals in Laskerian semirings, von Neumann regular semirings, unique factorisation semidomains, principal ideal semidomains, and weakly Noetherian semirings. Proposition \ref{loc3} establishes foundational links between $k$-strong irreducibility, $k$-primeness, and $k$-primary decomposition, showing in particular that in Laskerian and von Neumann regular semirings these notions coincide. In the setting of unique factorisation semidomains, Lemma \ref{loc15} and Theorem \ref{loc2} provide characterizations of $k$-strongly irreducible ideals in terms of prime power decompositions and least common multiples, and show that such ideals are necessarily $k$-primary. Localization properties of $k$-strongly irreducible ideals are analysed in Corollary \ref{loc4}, yielding correspondence results for unique factorisation semidomains and von Neumann regular semirings. Further structural consequences are obtained for principal ideal semidomains, where non-zero $k$-prime ideals are shown to be $k$-maximal, and for weakly Noetherian semirings, whereas Proposition \ref{weakly Noetherian semiring, every k-irreducible ideal is k-primary} demonstrates that every $k$-irreducible ideal is $k$-primary.

The subsection \ref{Subsection: additively idempotent} focuses on additively idempotent semirings, \textit{alias}, $B_1$-algebras. We give a new order-theoretic perspective of $k$-prime and $k$-semiprime ideals in Theorem \ref{k-prime via natural partial order} and Theorem \ref{k-semiprime via natural partial order}, respectively, using the natural partial order. For additively idempotent unique factorisation semidomains, Theorem \ref{k-strongly via order and lcm} characterizes $k$-strongly irreducible ideals through least common multiples. We then establish that saturated ideals (see Definition \ref{Def: excellent congruences and saturated ideals}) coincide precisely with $k$-ideals (Theorem \ref{saturated iff k-ideal}), clarifying the relationship between ideals and excellent congruences in this setting. We conclude the paper by showing that in an additively idempotent semiring, the class of absolutely $k$-prime is contained in $k$-prime ideals and the class of $k$-maximal ideals is contained in absolutely $k$-prime ideals. 

\smallskip
\section{Preliminaries}\label{prim} 
This section collects the basic definitions and results needed to keep the exposition self-contained. We adopt the standard terminology and foundational material as presented in Golan’s work \cite{Golan}. 
Unless otherwise stated, all semirings considered here are commutative with identity. A semiring homomorphism $f\colon S\rightarrow S'$ is a map satisfying $f(x+y)=f(x)+f(y)$, $f(xy)=f(x)f(y)$, $f(0_S)=0_{S'}$ and $f(1_S)=1_{S'}$. 

\begin{definition}[Page 2,\cite{Nasehpour19}] Suppose $S$ is a semiring.
\begin{enumerate}
\item A non-zero and non-unit element $r$ of  $S$ is said to be \emph{irreducible} if $r=r_1r_2$, where $r_1$, $r_2\in S$, then either $r_1$ or $r_2$ is a unit.

\item  An element $p$ of  $S$ is said to be \emph{prime} if the principal ideal $\langle p \rangle$ is prime.
        
\item A semiring $S$ is said to be a \emph{semidomain} if $ab=ac$ implies $b=c$, for all $b$, $c \in S$ and all non-zero $a \in S$.
        
        \item Let $S$ be a semidomain. Then $S$ is called a \emph{unique factorisation semidomain} if the following conditions hold:
        \begin{enumerate}
            \item Each irreducible element of $S$ is a prime element of $S$.
            \item Any non-zero, non-unit element of $S$ can be uniquely written as a product of irreducible elements of $S$.
        \end{enumerate}
    \end{enumerate}
\end{definition}

We follow the definitions of \emph{prime, semiprime} and \emph{maximal} ideals as presented in Golan \cite[Chapter 6, Chapter 7]{Golan}.

\begin{definition}[\cite{Henriksen58}, \cite{Golan}]\label{Def: k-ideal and strong ideal}
    \textit{An ideal $I$ of a semiring $S$ is said to be a \emph{$k$-ideal} if $x+y\in I$ and $y\in I$ implies $x\in I$. Moreover, $I$ is called \emph{strong} if $a+b\in I$ implies $a$, $b\in I$}.
\end{definition} 

\begin{remark}
    In literature $k$-ideals are also known as \emph{subtractive ideals} (see \cite[Chapter 6]{Golan}) and \emph{semistrong ideals} (see \cite[Section 1]{Chermynkh12}).
\end{remark}

It is straightforward to verify that every strong ideal is a $k$-ideal, though the converse need not hold. 

\begin{definition}[Definition 2.2, \cite{GoswamiDube24}]
    \textit{Let $S$ be a semiring $I$ be an ideal of $S$. The $k$-closure of $I$ is defined by
    \[\mathcal{C}_k(I) :=\{s\in S\mid s+x\in I\ \text{for some} \ x\in I\}.\]}
\end{definition}

We will heavily use the following properties of $k$-closure operators lifted from \cite[Lemma 2.3]{GoswamiDube24}.

\begin{lemma}
In what follows, $I$ and $\{I_\lambda\}_{\lambda\in \Lambda}$ are ideals of semiring $S$.
\begin{enumerate} 
        
\item $\mathcal{C}_k(I)$ is the smallest $k$-ideal containing $I$.
        
\item $\mathcal{C}_k(\mathcal{C}_k(I))=\mathcal{C}_k(I)$.
        
\item $\mathcal{C}_k(\bigcap_{\lambda\in \Lambda}I_\lambda)=\bigcap_{\lambda\in \Lambda}\mathcal{C}_k(I_\lambda)$.
\end{enumerate}
\end{lemma}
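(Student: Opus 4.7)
The plan is to establish the three parts in succession, with (1) doing the substantive work and (2), (3) following as short corollaries.

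For part (1), I would verify the four properties that characterize ``the smallest $k$-ideal containing $I$'' in turn. The inclusion $I \subseteq \mathcal{C}_k(I)$ is witnessed by $0 \in I$, since for any $s \in I$ we have $s + 0 = s \in I$. Closure of $\mathcal{C}_k(I)$ under addition and scalar multiplication is routine: if $a, b \in \mathcal{C}_k(I)$ with witnesses $x, y \in I$, then $(a + b) + (x + y) \in I$ with $x + y \in I$, and for $r \in S$ one has $r(a + x) = ra + rx \in I$ with $rx \in I$. The central step is the $k$-property: given $a + b \in \mathcal{C}_k(I)$ and $b \in \mathcal{C}_k(I)$ with witnesses $x, y \in I$, I would combine them by noting that $(a + b + x) + y$ lies in $I$ as a sum of two elements of $I$, and rewriting this as $a + (b + x + y)$ with $b + x + y \in I$; hence $a \in \mathcal{C}_k(I)$. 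Minimality follows immediately: if $J$ is any $k$-ideal with $I \subseteq J$ and $a \in \mathcal{C}_k(I)$ has witness $x \in I \subseteq J$, then $a + x \in J$ and $x \in J$, so the $k$-property of $J$ forces $a \in J$.

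Part (2) is then a one-line corollary of (1): since $\mathcal{C}_k(I)$ is itself a $k$-ideal, it is trivially the smallest $k$-ideal containing itself, hence equals $\mathcal{C}_k(\mathcal{C}_k(I))$.

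For part (3), the inclusion $\mathcal{C}_k(\bigcap_\lambda I_\lambda) \subseteq \bigcap_\lambda \mathcal{C}_k(I_\lambda)$ is the routine direction: each $\mathcal{C}_k(I_\lambda)$ is a $k$-ideal, so their intersection is a $k$-ideal containing $\bigcap_\lambda I_\lambda$, and minimality from (1) delivers the inclusion. The reverse inclusion is where I expect the main obstacle. Given $a \in \bigcap_\lambda \mathcal{C}_k(I_\lambda)$, one obtains for each $\lambda$ a witness $x_\lambda \in I_\lambda$ with $a + x_\lambda \in I_\lambda$, but these witnesses depend on $\lambda$, and to conclude $a \in \mathcal{C}_k(\bigcap_\lambda I_\lambda)$ one must produce a single $y \in \bigcap_\lambda I_\lambda$ such that $a + y \in \bigcap_\lambda I_\lambda$. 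My approach here would be to exploit ideal absorption, noting that products of witnesses from distinct $I_\lambda$ lie in every $I_\lambda$ through which a factor runs, and then to verify the required closure relation by a careful elementwise argument; for arbitrary index sets this is the delicate step, and I would follow the method of the cited predecessor for the technical coherence of the assembly.
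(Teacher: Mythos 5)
Your parts (1) and (2) are correct and complete: the witness bookkeeping for $I\subseteq\mathcal{C}_k(I)$, closure under addition and absorption, the regrouping $(a+b+x)+y=a+(b+x+y)$ for the $k$-property, and the minimality argument are exactly the standard verification, and (2) does follow formally from (1). The first half of (3), namely $\mathcal{C}_k\bigl(\bigcap_\lambda I_\lambda\bigr)\subseteq\bigcap_\lambda\mathcal{C}_k(I_\lambda)$, is also fine (it only needs that an intersection of $k$-ideals is a $k$-ideal together with (1)). Note that the paper itself gives no proof of this lemma; it is quoted from the reference \cite{GoswamiDube24}, so there is no in-paper argument to compare against.

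The genuine gap is the reverse inclusion in (3), which you do not prove: ``exploit ideal absorption \ldots products of witnesses \ldots follow the method of the cited predecessor'' is a statement of intent, not an argument, and no elementwise assembly of the witnesses $x_\lambda$ can succeed at the stated level of generality, because for arbitrary (non-subtractive) ideals the inclusion $\bigcap_\lambda\mathcal{C}_k(I_\lambda)\subseteq\mathcal{C}_k\bigl(\bigcap_\lambda I_\lambda\bigr)$ is simply false. Take $S=\mathds{N}[a,x_1,x_2]$, $I_1=\langle x_1,\,a+x_1\rangle$ and $I_2=\langle x_2,\,a+x_2\rangle$. Then $a\in\mathcal{C}_k(I_1)\cap\mathcal{C}_k(I_2)$, with witnesses $x_1$ and $x_2$. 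Suppose some $y\in I_2$ satisfied $a+y\in I_1$, and write $y=rx_2+s(a+x_2)$, $a+y=px_1+q(a+x_1)$ with $p,q,r,s\in S$. Since coefficients lie in $\mathds{N}$ there is no cancellation of monomials; comparing the coefficient of the monomial $a$ on both sides of $a+y=px_1+q(a+x_1)$ forces the constant term $q_0\geqslant 1$, while comparing the coefficient of the monomial $x_1$ gives $0=p_0+q_0\geqslant 1$ (the left side is $0$ because every monomial of $y\in I_2$ is divisible by $x_2$ or $a$), a contradiction. Hence no $y\in I_1\cap I_2$ witnesses $a\in\mathcal{C}_k(I_1\cap I_2)$, so equality fails. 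The missing direction is therefore not merely ``delicate'': it requires an additional hypothesis (for instance that the $I_\lambda$ are already $k$-ideals, in which case $\mathcal{C}_k(I_\lambda)=I_\lambda$ and (3) collapses to closure of $k$-ideals under intersection), or the claim must be weakened to the single inclusion you did prove. As written, your plan for (3) cannot be completed.
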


Let us recall the definitions $k$-contraction and $k$-extension of $k$-ideals from \cite[Definition 4.1]{GoswamiDube24}.

\begin{definition} 
	Let $S$, and $S'$ be two semirings and $\varphi\colon S\to S'$ be a semiring homomorphism. 
	\begin{enumerate}
		\item If $J$ is a $k$-ideal of $S'$, then the \emph{$k$-contraction of} $J$, denoted by $J^c$, is defined by $\varphi\inv(J)$.
		\item If $I$ is a $k$-ideal of $S$, then the \emph{$k$-extension of} $I$, denoted by $I^e$, is defined by $\mathcal{C}_k(\langle\varphi(I)\rangle)$.
	\end{enumerate}    
\end{definition}

\begin{remark}
	By a $k$-contracted ideal $A$ of $S$, we mean, there exists a $k$-ideal $I$ of $S'$ such that $A=I^c$. Similarly, if $B$ is a $k$-extended ideal of $S'$, then $B=J^e$, for some $k$-ideal $J$ of $S$.
\end{remark}

We recall some subclasses of $k$-ideals that play a central role throughout this work.

\begin{definition}[Definition 3.4, Definition 3.20, \cite{GoswamiDube24}]
Let $S$ be a semiring. 
    \begin{enumerate}
        \item A $k$-ideal $I$ of $S$ is said to be \emph{$k$-strongly irreducible} if $A\cap B \subseteq I$ implies $A\subseteq I$ or $B\subseteq I$, for two $k$-ideals $A$, $B$ of $S$ and \emph{$k$-irreducible} if $A\cap B=I$ implies either $A=I$ or $B=I$.
        
        \item A $k$-ideal $I$ of $S$ is said to be \emph{$k$-prime} if $AB\subseteq I$ implies $A\subseteq I$ or $B\subseteq I$, for two $k$-ideals $A$, $B$ of $S$.
        
        \item If $I$ is a $k$-ideal of $S$, it is said to be \emph{$k$-semiprime} if $A^2\subseteq I$ implies $A\subseteq I$. 
        
        \item A $k$-ideal $I$ of $S$ is called \emph{$k$-primary} if $ab\in I$ and $a\notin I$ implies $b^n\in I$, for some $n\in \mathds{N}$.
    \end{enumerate}
\end{definition}

\begin{remark}\label{Remark: ideal classes are different} Despite their close relationship, the classes of $k$-prime, $k$-semiprime, and $k$-strongly irreducible ideals do not coincide in general semirings. We note two instances where this divergence occurs. 
\begin{enumerate}
    \item Clearly, every $k$-prime ideal is $k$-strongly irreducible and $k$-semiprime. The converse is not always true, like in $(\mathds{N},+,\cdot)$, the ideal $4\mathds{N}$ is $k$-strongly irreducible but not $k$-prime. Moreover, the ideal $6\mathds{N}$ in $\mathds{N}$ is a $k$-semiprime ideal, which is neither $k$-prime nor $k$-strongly irreducible. Moreover, the ideal $p^2\mathds{N}$, where $p$ is any prime number, is a $k$-primary ideal which is not $k$-prime. 
    \item Each $k$-strongly irreducible ideal is $k$-irreducible, whereas the converse is not true in general. Consider the semiring $\mathds{N}[x,y]$ and the ideal $\langle x,y\rangle$. Then $\langle x,y\rangle$ is a $k$-ideal, and it cannot be written as an intersection of two proper $k$-ideals, that is, it is $k$-irreducible. If $I=\langle x,y^2\rangle$ and $J=\langle x^2,y\rangle$, then $\langle x,y^2\rangle \cap \langle x^2,y\rangle \subseteq \langle x,y\rangle$. But $\langle x,y^2\rangle \nsubseteq \langle x,y\rangle$ and $\langle x^2,y\rangle \nsubseteq \langle x,y\rangle$. 
\end{enumerate}
    We also note that not all prime, semiprime, and strongly irreducible ideals are $k$-ideals in general semirings. For instance, take the unique maximal ideal $\mathds{Q}_+[X]\setminus \mathds{Q}_+$ in the semiring $\mathds{Q}_+[X]$, which is not a $k$-ideal. 
\end{remark}

We denote the collection of all prime ideals of a semiring $S$ by $\mathrm{Spec}(S)$, and the collection of all $k$-prime ideals by $\mathrm{Spec}_k(S)$. 

\begin{definition}[Definition 3.12, \cite{GoswamiDube24}]
    The \emph{$k$-radical} of a $k$-ideal $I$ of a semiring $S$ is defined as \[\mathcal{R}_k(I):=\bigcap_{I\subseteq P}\{P\mid P\in \mathrm{Spec}_k(S)\}.\]
\end{definition}

 Throughout the paper, we repeatedly invoke the following elementary properties of the $k$-radical closure (lifted from \cite[Lemma 3.13]{GoswamiDube24}) whenever required.

\begin{lemma} In the following, $I$ and $J$ are ideals of a semiring $S$.
\begin{enumerate}
    \item $\mathcal{R}_k(I)$ is a $k$-ideal containing $I$.
    \item $\mathcal{R}_k(IJ)=\mathcal{R}_k(I\cap J)=\mathcal{R}_k(I)\cap \mathcal{R}_k(J)$.
\end{enumerate}
\end{lemma}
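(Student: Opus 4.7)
The plan is to deduce both statements from the definition of $\mathcal{R}_k$ as an intersection of $k$-prime ideals, together with the ``exchange principle'' recorded in the introduction, which guarantees that every $k$-prime ideal is simultaneously a $k$-ideal and an ordinary prime ideal of $S$.

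For part (1), the inclusion $I\subseteq \mathcal{R}_k(I)$ is tautological, since every ideal entering the defining intersection already contains $I$. To see that $\mathcal{R}_k(I)$ is itself a $k$-ideal, I would first record the elementary fact that an arbitrary intersection of $k$-ideals is a $k$-ideal: if $x+y$ and $y$ both lie in $\bigcap_{\lambda\in\Lambda}I_{\lambda}$, then $x\in I_{\lambda}$ for each $\lambda\in \Lambda$, hence $x\in \bigcap_{\lambda\in\Lambda}I_{\lambda}$. Applied to the family of all $k$-prime ideals containing $I$, this gives the result. In the degenerate situation where no $k$-prime ideal contains $I$, one adopts the usual convention $\mathcal{R}_k(I)=S$, which is vacuously a $k$-ideal; Proposition~\ref{krull's theorem} ensures non-vacuity whenever $I$ is a proper $k$-ideal.

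For part (2), the heart of the argument is the set-theoretic identity
\[
\bigl\{P\in \mathrm{Spec}_k(S) : IJ\subseteq P\bigr\} \;=\; \bigl\{P\in \mathrm{Spec}_k(S) : I\cap J\subseteq P\bigr\} \;=\; \bigl\{P\in \mathrm{Spec}_k(S) : I\subseteq P \text{ or } J\subseteq P\bigr\}.
\]
The chain of inclusions $IJ\subseteq I\cap J\subseteq I,\,J$ makes the ``$\supseteq$'' direction in both equalities automatic. Conversely, suppose $P\in \mathrm{Spec}_k(S)$ with $IJ\subseteq P$; by the exchange principle $P$ is prime in the classical sense, so $I\subseteq P$ or $J\subseteq P$. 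Intersecting over this common indexing family then yields
\[
\mathcal{R}_k(IJ) \;=\; \mathcal{R}_k(I\cap J) \;=\; \Bigl(\bigcap_{I\subseteq P} P\Bigr)\cap \Bigl(\bigcap_{J\subseteq P} P\Bigr) \;=\; \mathcal{R}_k(I)\cap \mathcal{R}_k(J).
\]

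The only point requiring more than routine manipulation is the translation of the ideal-level $k$-primeness condition (stated in the paper only for pairs of $k$-ideals $A$, $B$) to the arbitrary ideals $I$ and $J$; this is precisely the content of the exchange principle, and once it is invoked the rest of the argument is purely formal.
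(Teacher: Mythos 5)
Your proof is correct: part (1) follows since every $k$-prime ideal is by definition a $k$-ideal and an intersection of $k$-ideals containing $I$ is a $k$-ideal containing $I$ (with the usual convention $\mathcal{R}_k(I)=S$ when the family is empty), and part (2) follows because the exchange principle makes each $P\in\mathrm{Spec}_k(S)$ a prime ideal in the classical sense, so the three indexing families $\{P: IJ\subseteq P\}$, $\{P: I\cap J\subseteq P\}$ and $\{P: I\subseteq P\ \text{or}\ J\subseteq P\}$ coincide and the intersections agree. The paper itself gives no proof of this lemma (it is quoted from \cite[Lemma 3.13]{GoswamiDube24}), and your argument is exactly the standard one underlying that cited result, so there is nothing substantive to compare; the only cosmetic point is that invoking Proposition~\ref{krull's theorem} for non-vacuity also needs Zorn's lemma to produce a maximal $k$-ideal disjoint from $\{1\}$ containing $I$, but this remark is not needed for the lemma as stated.
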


Note that the appropriate congruence theoretic counterpart of $k$-ideals is the $k$-congruences, in the sense of Han \cite[Definition 3.2]{Han21}. 

\begin{definition} \label{Def: k-congruence}
    Let $S$ be a semiring. A congruence $\rho$ on $S$ is said to be a \emph{$k$-congruence} if $\rho=K_I$, for some ideal $I$ of $S$.

\end{definition}

There exists a one-one correspondence between $k$-ideals and $k$-congruences, see \cite[Theorem 3.8]{Han21}.

\begin{definition}
    A $k$-congruence $\theta$ is said to \emph{irreducible} if for any two $k$-congruence $\rho_1$, $\rho_2$, and $\rho_1\cap \rho_2=\theta$, implies that $\rho_1=\theta$ or $\rho_2=\theta$.
\end{definition}

If $S$ is an additively idempotent semiring, a natural partial order on $S$ is defined as follows: for $x$, $y\in S$, \[x\leqslant y \Longleftrightarrow x+y=y.\]
Then, we have the following from \cite[Proposition 3.11]{JunRayTolliver22}. 

\begin{proposition}
    Let $S$ be an additively idempotent semiring and $I$ be an ideal. Then $I$ is a $k$-ideal if and only if for all $x\in I$ and $y\leqslant x$, we have $y\in I$.
\end{proposition}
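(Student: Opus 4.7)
The plan is to prove both directions directly from the definitions, using additive idempotency only in the backward implication to convert the $k$-ideal condition into an order statement.

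For the forward direction, I would assume $I$ is a $k$-ideal and take any $x\in I$ together with some $y\in S$ satisfying $y\leqslant x$. By the definition of the natural partial order, this means $y+x=x$. Hence $y+x\in I$ (since $x\in I$) while simultaneously $x\in I$, so the $k$-ideal property applied to the sum $y+x\in I$ with summand $x\in I$ yields $y\in I$, as required.

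For the backward direction, assume that every element bounded above by a member of $I$ again lies in $I$, and suppose $a+b\in I$ together with $b\in I$; I want to conclude $a\in I$. The key algebraic identity is
\[a+(a+b)=(a+a)+b=a+b,\]
where the second equality uses additive idempotency of $S$. This identity says exactly that $a\leqslant a+b$ in the natural partial order. Since $a+b\in I$, the hypothesis immediately forces $a\in I$, so $I$ is a $k$-ideal.

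There is no substantial obstacle here; the argument is essentially a one-line translation in each direction. The only subtlety worth flagging is that additive idempotency is genuinely used in the backward direction via the step $a+a=a$, and the forward direction does not require idempotency at all. Thus the proposition really records the fact that, in the additively idempotent setting, the $k$-ideal condition is precisely the down-closure condition with respect to $\leqslant$.
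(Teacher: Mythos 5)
Your proof is correct: the forward direction is an immediate application of the subtractivity condition to $y+x=x\in I$ with $x\in I$, and the backward direction correctly uses additive idempotency to get $a+(a+b)=a+b$, i.e.\ $a\leqslant a+b$, so down-closure gives $a\in I$. The paper itself states this proposition without proof, citing Jun--Ray--Tolliver, and your argument is exactly the standard one (including the accurate remark that idempotency is needed only for the backward implication), so there is nothing to correct.
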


\smallskip
\section{Basic results}\label{Section 3: Some results on k-ideals on semirings}
 
The ideal theory of a semiring has a remarkably different flavour from that of a ring. While the lattice of ideals of a ring is always modular, this property fails in general for semirings; see \cite[Example 6.36]{Golan}. In contrast, $k$-ideals retain many of the familiar properties of ring ideals, for example, the lattice of all $k$-ideals with a meaningful join and meet is modular; see \cite[Theorem 2.4]{SenAdhikari92}. A semiring can be $k$-ideal simple, that is, the only $k$-ideals are the trivial ones. We give some examples of $k$-ideal simple semirings. 

\begin{example}
\begin{enumerate}
    \item[]
        \item The semiring $\mathds{Q}_+[\sqrt{2}]$ with usual addition and multiplication is $k$-ideal simple; see \cite[Section 5.2]{BiswasGoswamiSardar}. In general, a semiring $S$ satisfying the condition:
        \[
        1+xa=ya,\ \text{for all}\ a\in S\setminus\{0\}\ \text{and for some}\ x,y\in S,
        \] has trivial $k$-ideals. For a proof, see \cite[Corollary 2.8]{SenAdhikari93}.
        
        \item  Consider the semiring $C^\infty(X)=\{f\colon X\rightarrow (0, \infty]\mid $  $f$ $ \text{is continuous}\}\cup \{0\}$ with pointwise addition and pointwise multiplication, where $X$ is a topological space and the constant function $0$ which is the additive identity and an absorbing element.  Since $1+\infty=\infty$ and the constant function $\infty$ is also multiplicatively absorbing,  then $\infty$ belongs to every ideal, but $1$ is not. Therefore, the semiring $C^\infty(X)$ is a $k$-ideal simple semiring. 
    \end{enumerate}
\end{example}

\begin{definition}
    A semiring $S$ is said to be a \emph{$k$-semiring} if each ideal of $S$ is a $k$-ideal. Moreover, $S$ is called \emph{strongly $k$-semiring}  if each ideal of $S$ is a strong ideal. 
\end{definition}

In literature, $k$-semirings and strongly $k$-semirings are also known as \emph{subtractive semirings} and \emph{strongly subtractive}, see \cite{Katsov09, Katsov11, Il'inKatsovNam17}. There are numerous examples of $k$-semirings; we list a few of them.

\begin{example}\label{example: subtractive semirings}
\begin{enumerate}
    \item[] 
        \item Any bounded distributive lattice $(L,\vee,\wedge,0,1)$ when considered as a semiring. In fact, it is a strongly $k$-semiring.
        \item Product of rings and bounded distributive lattices, both considered as semirings.
        \item The semiring of non-negative real-valued continuous functions over an $F$-space, see \cite[Theorem 3.9]{BiswasBagSardarFilomat24}
        \item The semiring of non-negative real-valued measurable functions over a measurable space; see \cite[Corollary 3.2]{BiswasBagSardarFilomat25}.
    \end{enumerate}
\end{example}

\begin{remark}
    The semirings in (3) and (4) of Example \ref{example: subtractive semirings} are cancellative. In contrast, there are semirings which are cancellative, but not every ideal is a $k$-ideal. For example, take the semiring $C^+(X)$ for any Tychonoff space $X$, Then with pointwise addition and multiplication, $C^+(X)$ is a cancellative semiring, but not all ideals are $k$-ideals; see \cite[Example 3.1]{BiswasBagSardarFilomat24}.
\end{remark}

While $k$-ideals recover much of the behaviour of ring ideals, the next theorem shows that strong ideals align naturally with lattice ideals. We denote the collection of all ideals of a semiring $S$ by $\mathrm{Id}(S)$. 

\begin{theorem}\label{Ideal lattice of strongly subtractive is distributive}
    Let $S$ be a strongly subtractive semiring. Then the lattice $(\mathrm{Id}(S),+,\cap)$ is distributive. 
\end{theorem}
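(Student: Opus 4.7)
The plan is to verify the two defining distributive identities directly, and the strong-ideal hypothesis will handle the only non-trivial containment. Since $(\mathrm{Id}(S),+,\cap)$ is always a lattice with $+$ as join and $\cap$ as meet, and since distributivity of a lattice is equivalent to the identity
\[
A\cap (B+C)=(A\cap B)+(A\cap C)\qquad\text{for all }A,B,C\in\mathrm{Id}(S),
\]
I will focus on proving this single equality.

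First I would record the easy containment $(A\cap B)+(A\cap C)\subseteq A\cap (B+C)$. This holds in every semiring: both $A\cap B$ and $A\cap C$ are contained in $A$, so their sum lies in $A$; and both are contained in $B+C$, so their sum lies in $B+C$ as well. No use of the strong-ideal hypothesis is needed here.

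For the reverse containment, the idea is to pick an arbitrary $x\in A\cap(B+C)$ and write $x=b+c$ with $b\in B$ and $c\in C$. Because $x\in A$ and $A$ is, by hypothesis, a strong ideal, the relation $b+c=x\in A$ forces $b\in A$ and $c\in A$. Hence $b\in A\cap B$ and $c\in A\cap C$, so $x=b+c\in (A\cap B)+(A\cap C)$. This is the pivotal step, and it is exactly where the strongly subtractive assumption is consumed: in an ordinary semiring one cannot split a sum that lies in $A$ into two summands lying in $A$, which is precisely why the ideal lattice of a general semiring fails to be distributive.

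There is no real obstacle beyond this; the whole argument rests on the tautological observation that the strong-ideal property is exactly the closure condition needed to invert the formation of sums inside a given ideal. Once both containments are established, $A\cap(B+C)=(A\cap B)+(A\cap C)$ follows, and hence $(\mathrm{Id}(S),+,\cap)$ is distributive, completing the proof.
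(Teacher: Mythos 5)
Your proof is correct and follows essentially the same argument as the paper: the one nontrivial containment $A\cap(B+C)\subseteq(A\cap B)+(A\cap C)$ is obtained by writing $x=b+c\in A$ and invoking the strong-ideal property of $A$ to split the sum. No comments beyond that.
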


\begin{proof}
    It is enough to show that $I\cap (J+K)\subseteq (I\cap J)+(I\cap K)$ for any $I$, $J$ and $K$ in $\mathrm{Id}(S)$. Let $a\in I\cap (J+K)$. Then $a=x+y\in I$, for some $x\in J$ and $y\in K$. Since $I$ is strong, $x\in I$ and $y\in I$. We conclude that $a\in (I\cap J)+(I\cap K)$ and hence $I\cap (J+K)\subseteq (I\cap J)+(I\cap K)$.
\end{proof}

As a corollary, we get that the ideal lattice of any bounded distributive lattice is distributive. 
Next, we note an elementary fact, which will be used freely in what follows.
\begin{lemma}\label{loc16}
    Every $k$-maximal ideal of a semiring is $k$-prime.
\end{lemma}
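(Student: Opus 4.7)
My plan is to argue directly from the definition of $k$-prime: given $k$-ideals $A, B$ of $S$ with $AB \subseteq M$, I want to show that $A \subseteq M$ or $B \subseteq M$. Suppose toward a contradiction that $A \not\subseteq M$; the goal is to deduce $B \subseteq M$.

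The key move is a semiring-theoretic substitute for the ring trick ``$M$ maximal and $a \notin M$ implies $M + \langle a \rangle = R$''. Since $S$ admits no subtraction, and since the ordinary sum $A + M$ of $k$-ideals need not itself be a $k$-ideal, I replace it by the $k$-closure $\mathcal{C}_k(A + M)$, which is the smallest $k$-ideal containing $A + M$. Choosing any $a \in A \setminus M$ gives $a \in A + M \subseteq \mathcal{C}_k(A + M)$ while $a \notin M$, so $M \subsetneq \mathcal{C}_k(A + M)$. By $k$-maximality of $M$, this forces $\mathcal{C}_k(A + M) = S$, and in particular $1 \in \mathcal{C}_k(A + M)$.

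Unpacking this via the definition of $\mathcal{C}_k$, there exists $x \in A + M$ with $1 + x \in A + M$. Writing $x = a_0 + m_0$ and $1 + x = a_1 + m_1$ with $a_i \in A$ and $m_i \in M$ yields the additive identity
\[
1 + a_0 + m_0 \;=\; a_1 + m_1.
\]
Multiplying both sides by an arbitrary $b \in B$ and using $AB \subseteq M$ together with the fact that $M$ is an ideal, every term apart from the leading $b$ lies in $M$. So $b + m = m'$ for some $m, m' \in M$, and because $M$ is a $k$-ideal this forces $b \in M$. Hence $B \subseteq M$, the desired contradiction.

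The only genuine obstacle, and the reason this is not the verbatim ring-theoretic proof, is the passage from $A + M$ to $\mathcal{C}_k(A + M)$: without subtraction, $k$-maximality has to be applied to a bona fide $k$-ideal, and the resulting membership $1 \in \mathcal{C}_k(A + M)$ must be exploited in the additive form ``$1 + x \in A + M$ for some $x \in A + M$'' rather than as an equation ``$1 = \alpha + \mu$''. Once this additive identity is in hand, multiplying by $b$ and invoking the $k$-ideal property of $M$ closes the argument without further difficulty.
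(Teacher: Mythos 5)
Your proof is correct and follows essentially the same route as the paper's: $k$-maximality forces the $k$-closure of the enlarged ideal to equal $S$, the membership $1\in\mathcal{C}_k(\cdot)$ is unpacked into an additive identity, and multiplying by $b$ together with the subtractivity of $M$ gives $b\in M$. The only (harmless) differences are that you verify the ideal-wise definition of $k$-prime directly via $\mathcal{C}_k(A+M)$, whereas the paper checks elementwise primeness via $\mathcal{C}_k(M+\langle a\rangle)$, and that your closing ``desired contradiction'' is really just the desired conclusion $B\subseteq M$ rather than a contradiction.
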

    
\begin{proof}
    Let $S$ be a semiring and let $M$ be a $k$-maximal ideal of $S$. Let $ab \in M$ and $a \notin M$, for some $a$, $b \in S$. Then $M \subseteq M+\langle a\rangle$, which further implies that $M \subseteq M+\langle a\rangle  \subseteq \mathcal{C}_k(M+\langle a\rangle)$. Since $M$ is a $k$-maximal ideal, we must have $\mathcal{C}_k(M+\langle a\rangle)= S$. Therefore $1 \in \mathcal{C}_k(M+\langle a\rangle)$. By definition, $1+ ax_1+m_1 = ax_2+m_2$, for some $x_1$, $x_2 \in S$ and $m_1$, $m_2 \in M$. We get $b+abx_1+bm_1 = abx_2+bm_2$. So, $b \in \mathcal{C}_k(M)=M$ as $M$ is a $k$-ideal. Hence, $M$ is a  $k$-prime ideal. 
\end{proof}

\begin{proposition}
    Let $S$ be a semiring and $x\in S$. Then the following are equivalent.
    \begin{enumerate}
        \item $1_{S}\in \mathcal{C}_k(\langle x \rangle)$.
        \item $x$ cannot be in any $k$-maximal ideal of $S$.
    \end{enumerate}
\end{proposition}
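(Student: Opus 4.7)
The plan is to prove the equivalence by showing the contrapositive of each direction, using the fact that $\mathcal{C}_k(\langle x\rangle)$ is the smallest $k$-ideal containing $x$.

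For the direction $(1)\Rightarrow(2)$, I would argue by contradiction. Suppose $1_S\in\mathcal{C}_k(\langle x\rangle)$ and yet $x$ lies in some $k$-maximal ideal $M$ of $S$. Since $M$ is an ideal containing $x$, we have $\langle x\rangle\subseteq M$, and since $M$ is itself a $k$-ideal, the minimality property of the $k$-closure forces $\mathcal{C}_k(\langle x\rangle)\subseteq M$. Then $1_S\in M$, contradicting the fact that $M$ is proper. This step is essentially mechanical once the minimality of $\mathcal{C}_k$ is invoked.

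For the direction $(2)\Rightarrow(1)$, I would again use the contrapositive: assume $1_S\notin\mathcal{C}_k(\langle x\rangle)$ and produce a $k$-maximal ideal containing $x$. Since $\mathcal{C}_k(\langle x\rangle)$ is a proper $k$-ideal, I would consider the family $\mathcal{F}$ of all proper $k$-ideals of $S$ containing $\mathcal{C}_k(\langle x\rangle)$, ordered by inclusion. The family $\mathcal{F}$ is nonempty because $\mathcal{C}_k(\langle x\rangle)\in\mathcal{F}$. For any chain in $\mathcal{F}$, its union $U$ is again an ideal, and one quickly checks $U$ is a $k$-ideal (if $a+b\in U$ and $b\in U$, both lie in some common member of the chain, which is a $k$-ideal, so $a\in U$); moreover $U$ is proper since $1_S$ belongs to none of the members. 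Zorn's lemma then yields a maximal element $M\in\mathcal{F}$, which is a $k$-maximal ideal containing $x$, contradicting (2).

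The main obstacle — really the only nontrivial point — is verifying that $\mathcal{F}$ is closed under unions of chains, that is, that the union of an ascending chain of proper $k$-ideals is again a proper $k$-ideal. This is a subtractive analogue of the standard Zorn argument for maximal ideals; the properness is immediate from $1_S$ being absent throughout the chain, and the $k$-property is preserved because subtractivity is witnessed by a single relation $x+y\in U$, $y\in U$, which lifts into some single member of the chain. Once this is in place, the rest of the argument is formal and runs parallel to the classical ring-theoretic proof.
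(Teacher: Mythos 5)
Your proposal is correct and follows essentially the same route as the paper: the direction $(1)\Rightarrow(2)$ is the identical minimality-of-$\mathcal{C}_k$ argument, and for $(2)\Rightarrow(1)$ the paper simply cites the fact that every proper $k$-ideal is contained in a $k$-maximal ideal, which is exactly the Zorn's lemma argument you spell out (your chain-union verification is the standard justification of that fact, and maximality in your family $\mathcal{F}$ does give $k$-maximality in $S$, since any proper $k$-ideal containing $M$ again lies in $\mathcal{F}$).
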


\begin{proof}
    Let $x$ be an element that does not belong to any $k$-maximal ideal of $S$. Since every proper $k$-ideal is contained in a $k$-maximal ideal of $S$, the ideal $\mathcal{C}_k(\langle x \rangle)$ cannot be a proper $k$-ideal. Therefore, $\mathcal{C}_k(\langle x \rangle)=S$ and so $1_{S}\in \mathcal{C}_k(\langle x \rangle)$. Conversely, let $1_{S} \in \mathcal{C}_k(\langle x \rangle)$. Let $I$ be a $k$-maximal ideal of $S$ such that $x\in I$. Then $\langle x \rangle\subseteq I$, and hence, $\mathcal{C}_k(\langle x \rangle) \subseteq \mathcal{C}_k(I)=I$. From this, we conclude that $1_{S} \in I$, a contradiction. Hence $x\notin I$.
\end{proof}

As in the case of rings, the correspondence between prime ideals and $m$-systems is well established in semiring theory; see \cite[Corollary 7.11]{Golan}. Golan further proves a Krull-type converse in \cite[Proposition 7.12]{Golan}. The following proposition provides a subtractive analogue of Krull’s result for semirings. 

\begin{proposition}\label{krull's theorem}
    Let $T$ be a multiplicatively closed set of a semiring $S$ and let $I$ be a $k$-ideal maximal among the $k$-ideals disjoint from $T$. Then $I$ is $k$-prime.
\end{proposition}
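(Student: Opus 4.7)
The plan is an argument by contradiction. Suppose $I$ is not $k$-prime; then by definition there exist $k$-ideals $A$, $B$ of $S$ with $AB\subseteq I$, while $A\nsubseteq I$ and $B\nsubseteq I$. The natural ideals to exploit the maximality are $\mathcal{C}_k(A+I)$ and $\mathcal{C}_k(B+I)$: both are $k$-ideals by the basic properties of the $k$-closure operator, and each strictly contains $I$, since any element $a\in A\setminus I$ lies in $A+I\subseteq \mathcal{C}_k(A+I)$ but not in $I$ (and analogously for $B$). The maximality of $I$ among $k$-ideals disjoint from $T$ then forces $\mathcal{C}_k(A+I)\cap T\neq\emptyset$ and $\mathcal{C}_k(B+I)\cap T\neq\emptyset$. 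Pick $t_1\in \mathcal{C}_k(A+I)\cap T$ and $t_2\in \mathcal{C}_k(B+I)\cap T$; by multiplicative closure $t_1 t_2\in T$, so the entire proof is reduced to establishing $t_1 t_2\in I$, which will contradict $I\cap T=\emptyset$.

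For this I would unpack the $k$-closure hypotheses: write $t_1+x_1=y_1$ with $x_1,y_1\in A+I$, and $t_2+x_2=y_2$ with $x_2,y_2\in B+I$. Multiplying these identities gives
\begin{equation*}
t_1 t_2+t_1 x_2+x_1 t_2+x_1 x_2 = y_1 y_2 \in (A+I)(B+I)\subseteq AB+AI+IB+I^2 \subseteq I,
\end{equation*}
and $x_1 x_2\in (A+I)(B+I)\subseteq I$ by the same inclusion. The delicate step is the cross terms $t_1 x_2$ and $x_1 t_2$. Decomposing $x_2=b+i$ with $b\in B$, $i\in I$, one has $t_1 x_2=t_1 b+t_1 i$, with $t_1 i\in I$; to handle $t_1 b$, multiply the relation $t_1+x_1=y_1$ by $b$ to get $t_1 b+x_1 b=y_1 b$, where $x_1 b,\,y_1 b\in (A+I)B\subseteq AB+IB\subseteq I$. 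The $k$-ideal property of $I$ then extracts $t_1 b\in I$, hence $t_1 x_2\in I$. A symmetric argument gives $x_1 t_2\in I$. Collecting all four terms, $t_1 t_2+(\text{element of }I)=(\text{element of }I)$, and a final appeal to the $k$-ideal property of $I$ yields $t_1 t_2\in I$, the desired contradiction.

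The main obstacle is precisely the cross-term bookkeeping. The products $t_1 b$ and $a\,t_2$ a priori live only in the larger $k$-ideals $\mathcal{C}_k(A+I)$ and $\mathcal{C}_k(B+I)$, so one cannot directly invoke $AB\subseteq I$; instead one must perform a two-level application of subtractivity: first to pull $t_1 b$ and $a\, t_2$ into $I$ using $(A+I)B,\,A(B+I)\subseteq I$, and then to pull $t_1 t_2$ itself into $I$. Once this nested $k$-closure extraction is in place, the rest is the usual Zorn-style passage from maximality to primeness, and the conclusion that $I$ is $k$-prime follows.
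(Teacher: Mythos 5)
Your argument is correct, and it rests on exactly the same mechanism as the paper's proof: maximality of $I$ forces the $k$-closures of the enlarged ideals to meet $T$, and then a product of the two subtractivity relations, together with a two-stage application of the $k$-ideal property of $I$, places the product of the two chosen elements of $T$ inside $I$, contradicting disjointness. The only real difference is the level at which you work: you verify the ideal-wise definition of $k$-prime directly, taking arbitrary $k$-ideals $A$, $B$ with $AB\subseteq I$ and using $\mathcal{C}_k(A+I)$, $\mathcal{C}_k(B+I)$, whereas the paper argues element-wise, taking $ab\in I$ with $a,b\notin I$, using $\mathcal{C}_k(I+\langle a\rangle)$, $\mathcal{C}_k(I+\langle b\rangle)$, and tacitly invoking the equivalence ``$k$-prime $=$ $k$-ideal $+$ prime'' (the exchange principle) to reduce to primeness. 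Your version buys self-containedness, since it never appeals to that equivalence, at the cost of slightly heavier cross-term bookkeeping; the paper's version streamlines the computation by first extracting $s_1b\in I$ and $s_2a\in I$ from the principal-ideal relations, which is the element-wise counterpart of your handling of $t_1b$ and $a't_2$. Both are complete proofs.
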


\begin{proof}
 To prove $I$ is $k$-prime, we need to show that $I$ is prime. Let $ab \in I$ such that $a$, $b \notin I$. Consider the $k$-ideal $\mathcal{C}_k(I+\langle a\rangle)$. Then $\mathcal{C}_k(I+\langle a\rangle)$ is a $k$-ideal properly containing $I$, and therefore, intersects $T$. So, there exists $s_1 \in \mathcal{C}_k(I+\langle a\rangle)$, which implies $s_1+i_1+x_1a=i_2+x_2a$, for some $i_1$, $i_2 \in I$ and $x_1$, $x_2 \in S$. Then, $s_1b+i_1b+x_1ab=i_2b+x_2ab$. This implies $s_1b \in I$ as $i_1$, $i_2$, $ab \in I$. Similarly, $\mathcal{C}_k(I+\langle b\rangle)$ is a $k$-ideal properly containing $I$, and so, it intersects $T$. Therefore, there exists $s_2 \in \mathcal{C}_k(I+\langle b\rangle)$. Then, $s_2+j_1+y_1b=j_2+y_2b$, for some $j_1$, $j_2 \in I$ and $y_1$, $y_2 \in S$. So, $s_2a+j_1a+y_ab=j_2a+y_2ab$, which implies $s_2a \in I$, as $j_1$, $j_2$, $ab \in I$. We have, \[(s_1+i_1+x_1a)(s_2+j_1+y_1b)=(i_2+x_2a)(j_2+y_2b).\] Thus \[s_1s_2+s_1+s_1j_1+y_1s_1b+i_1(s_2+j_1+y_1b)+x_1s_2a+j_1x_1a+x_1y_1ab=i_2(j_2+y_2b)+j_2x_2a+x_2y_2ab.\] Therefore, we get $s_1s_2 \in I$, which is a contradiction to the fact that $I$ is disjoint from $T$. Hence $I$ is $k$-prime.
\end{proof}
Although $k$-primary and $k$-prime ideals are distinct in general (\textit{c.f.} Remark \ref{Remark: ideal classes are different}), the following theorem shows that this distinction disappears for von Neumann regular semirings. 

\begin{theorem}\label{loc14}
    In a von Neumann regular semiring, the classes of $k$-prime and $k$-primary ideals coincide.
\end{theorem}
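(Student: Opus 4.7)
The plan is to prove the non-trivial inclusion: every $k$-primary ideal in a von Neumann regular semiring is $k$-prime. The reverse inclusion (every $k$-prime is $k$-primary) is immediate from the definitions, since $ab \in I$ with $a\notin I$ forces $b\in I$, hence $b^{1}\in I$.

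So let $I$ be a $k$-primary ideal of a commutative von Neumann regular semiring $S$. Since $I$ is already a $k$-ideal, by the exchange principle for $k$-prime ideals it suffices to show that $I$ is prime, that is, $ab\in I$ with $a\notin I$ implies $b\in I$. The $k$-primary hypothesis gives an integer $n\in \mathds{N}$ with $b^{n}\in I$, and the task reduces to promoting a power of $b$ in $I$ to $b$ itself. This is where von Neumann regularity enters: pick $x\in S$ with $b=bxb$.

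The key computational step I would carry out is a short induction showing that, for every $n\geqslant 1$,
\[
b \;=\; (bx)^{n-1}b \;=\; x^{n-1}b^{n},
\]
where the second equality uses commutativity of $S$. The base case $n=1$ is trivial, and the inductive step simply replaces one factor of $b$ by $bxb$. Once this identity is in hand, the conclusion is immediate: since $b^{n}\in I$ and $I$ is an ideal of $S$, the element $x^{n-1}b^{n}$ lies in $I$, hence $b\in I$. Thus $I$ is prime, and being a $k$-ideal it is $k$-prime.

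There is no real obstacle here beyond recording the correct identity $b=x^{n-1}b^{n}$; the absence of additive inverses is harmless because the argument never passes through $b(1-xb)=0$ as one would in the ring-theoretic version. A side benefit worth a remark in the write-up is that the same computation shows every ideal in a commutative von Neumann regular semiring is semiprime (in particular, radical), so that the coincidence proved here is really a manifestation of the collapse of nilpotents modulo any ideal.
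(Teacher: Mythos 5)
Your proposal is correct and follows essentially the same route as the paper: the easy direction uses the exchange principle, and the hard direction uses von Neumann regularity ($b=b^2x$) to pull $b$ out of $b^n\in I$. The only cosmetic difference is that you package the descent as the closed-form identity $b=x^{n-1}b^n$, whereas the paper lowers the exponent one step at a time via $y^{m-1}=y^{m}x\in I$.
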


\begin{proof}
    Let $S$ be a von Neumann regular semiring and $I$ be a $k$-prime ideal of $S$. Then $I$ is a $k$-ideal of $S$. Therefore, $xy \in I$ implies $x \in I$ or $y \in I$, which further implies $I$ is $k$-primary.
    Conversely, suppose that $I$ is a $k$-primary ideal of $S$. Let $xy \in I$. Then $x \in I$ or $y^n \in I $, for some natural number $n$. If $x \in I$, then $I$ is prime. Now, let $y^n \in I$. Since $S$ is von Neumann regular for $y \in S$, there exists $x \in S$ such that $y=y^2x$. This implies $y^{n-2} y = y^{n-2}y^2x$, which further implies $y^{n-1}=y^nx \in I$, as $I$ is an ideal of $S$. Again, $y^{n-3} y=y^{n-3} y^2x$ implies $y^{n-2}=y^{n-1}x \in I$. Continuing in this way, we have $y^2 \in I$. Thus, $y=y^2x \in I$.
\end{proof}

Motivated by the correspondence between strongly irreducible ideals and $i$-systems \cite[Proposition 7.33]{Golan}, we introduce a subtractive variant of the notion of an $i$-system.

\begin{definition}
    A non-empty subset $A$ of a semiring $S$ is an \emph{$i_k$-system} if $a$, $b \in A $ implies that $\mathcal{C}_k(\langle a\rangle) \cap \mathcal{C}_k(\langle b\rangle) \cap A \neq \emptyset$.
\end{definition}

\begin{theorem}\label{loc11}
    Let $I$ be a $k$-ideal of a semiring $S$. Then the following conditions are equivalent.
    \begin{enumerate}
        \item $I$ is $k$-strongly irreducible.
        \item If $a$, $b \in S$ satisfying $\mathcal{C}_k(\langle a\rangle) \cap \mathcal{C}_k(\langle b\rangle) \subseteq I$, then $a \in I$ or $b \in I$.
        \item $S\backslash I$ is an $i_k$-system.
    \end{enumerate}
\end{theorem}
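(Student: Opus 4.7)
The plan is a cyclic implication $(1)\Rightarrow(2)\Rightarrow(1)$ together with the observation that $(2)$ and $(3)$ are literal contrapositives of each other.

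For $(1)\Rightarrow(2)$, I would simply note that both $\mathcal{C}_k(\langle a\rangle)$ and $\mathcal{C}_k(\langle b\rangle)$ are $k$-ideals by the first clause of the $k$-closure lemma, so strong irreducibility of $I$ applied to the inclusion $\mathcal{C}_k(\langle a\rangle)\cap\mathcal{C}_k(\langle b\rangle)\subseteq I$ forces one of them to lie in $I$; since $a\in\mathcal{C}_k(\langle a\rangle)$ and $b\in\mathcal{C}_k(\langle b\rangle)$, the conclusion follows.

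For $(2)\Rightarrow(1)$, I would argue by contrapositive. Assume $A$, $B$ are $k$-ideals with $A\cap B\subseteq I$, and suppose toward contradiction that neither $A$ nor $B$ is contained in $I$. Pick $a\in A\setminus I$ and $b\in B\setminus I$. The key observation is that since $A$ is a $k$-ideal containing $a$, it contains $\langle a\rangle$, and being a $k$-ideal it is closed under the $k$-closure operator, so $\mathcal{C}_k(\langle a\rangle)\subseteq A$; similarly $\mathcal{C}_k(\langle b\rangle)\subseteq B$. Therefore $\mathcal{C}_k(\langle a\rangle)\cap\mathcal{C}_k(\langle b\rangle)\subseteq A\cap B\subseteq I$, and $(2)$ forces $a\in I$ or $b\in I$, contradicting the choice of $a$ and $b$.

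For the equivalence $(2)\Leftrightarrow(3)$, I would unwind the definition: $S\setminus I$ is an $i_k$-system means that for every $a,b\in S\setminus I$ the intersection $\mathcal{C}_k(\langle a\rangle)\cap\mathcal{C}_k(\langle b\rangle)\cap(S\setminus I)$ is non-empty, equivalently that $\mathcal{C}_k(\langle a\rangle)\cap\mathcal{C}_k(\langle b\rangle)\not\subseteq I$ whenever $a,b\notin I$. This is precisely the contrapositive of $(2)$. There is no substantive obstacle here; the only subtle point, worth stating once, is the minimality fact that $\mathcal{C}_k(\langle a\rangle)$ is contained in every $k$-ideal containing $a$, which is what makes the implication $(2)\Rightarrow(1)$ go through.
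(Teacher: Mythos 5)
Your argument is correct and is essentially the paper's proof in a slightly rearranged form: the paper closes the cycle $(1)\Rightarrow(2)\Rightarrow(3)\Rightarrow(1)$, and its step $(3)\Rightarrow(1)$ rests on exactly the minimality fact you isolate, namely that $\mathcal{C}_k(\langle a\rangle)$ is contained in every $k$-ideal containing $a$. Your reorganization (proving $(2)\Rightarrow(1)$ directly and observing that $(2)$ and $(3)$ are contrapositives of one another) is only a cosmetic repackaging of the same ideas.
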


\begin{proof}
    (1)$\Rightarrow$(2): Clear from the definition.

    (2)$\Rightarrow$(3): Suppose $a$, $b \in S\setminus I$ and if possible, let $\mathcal{C}_k(\langle a\rangle) \cap \mathcal{C}_k(\langle b\rangle) \cap (S \setminus I)= \emptyset$. This implies $\mathcal{C}_k(\langle a\rangle) \cap \mathcal{C}_k(\langle b\rangle) \subseteq I$. So, $a \in I$ or $b \in I$, from $(2)$, which is a contradiction. Therefore, $\mathcal{C}_k(\langle a\rangle) \cap \mathcal{C}_k(\langle b\rangle) \cap (S \setminus I) \neq \emptyset$. Hence, $S \setminus I$ is an $i_k$-system.

    (3)$ \Rightarrow $(1):
    Let $A$, $B$ be two $k$-ideals of $S$ such that $A \cap B \subseteq I$, but $A \nsubseteq I$ and $B \nsubseteq I$. Then, there exist $x \in A \backslash I$ and $y \in B \backslash I$. By $(3)$, as $S \setminus I$ is an $i_k$-system, there exists $z \in \mathcal{C}_k(\langle a\rangle) \cap \mathcal{C}_k(\langle b\rangle) \cap (S \setminus I)$. This further implies $z \in A \cap B$ such that $z \notin I$, a contradiction. So, we get $A \subseteq I$ or $B \subseteq I$. Thus $I$ is $k$-strongly irreducible.
\end{proof}

As observed in Remark~\ref{Remark: ideal classes are different}, $k$-strongly irreducible and $k$-irreducible ideals may differ in general. In the sequel, we show that they coincide for a special class of semirings, after introducing the necessary definition.

\begin{definition}\label{Definition: arithmetic semiring}
    A semiring $S$ is said to be an \emph{arithmetic semiring} if 
        $A+(B \cap C)= (A+B) \cap (A+C)$, or equivalently,  
        $A \cap (B+C)= (A \cap B)+(A \cap C)$, for any ideals $A$, $B$, $C$ of $S$. In other words, in an arithmetic semiring, the set of all ideals of  $S$ forms a distributive lattice.
    
\end{definition}

\begin{proposition}
    The following are equivalent for a semiring $S$.
    \begin{itemize}
        \item[1.] $S$ is an arithmetic semiring.
        \item[2.] For any $a$, $b$, $c\in S$, $\langle a\rangle\cap (\langle b\rangle +\langle c\rangle)=(\langle a\rangle \cap \langle b\rangle)+(\langle a\rangle\cap \langle c\rangle)$ 
    \end{itemize}
\end{proposition}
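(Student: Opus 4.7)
The plan is to observe that the implication (1)$\Rightarrow$(2) is immediate, since $\langle a\rangle$, $\langle b\rangle$, $\langle c\rangle$ are themselves ideals of $S$, so the distributive identity from (1) specialises directly to this principal triple. The substantive content lies in the converse (2)$\Rightarrow$(1), where I would reduce distributivity for arbitrary ideals to distributivity merely for principal ideals.

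For the reverse implication, the first step is to record that the inclusion $(A\cap B)+(A\cap C)\subseteq A\cap(B+C)$ is automatic for any three ideals, so only the opposite inclusion requires proof. Pick $x\in A\cap(B+C)$ and write $x=b+c$ with $b\in B$, $c\in C$. The key manoeuvre is to apply hypothesis (2) not to $x$, $b$, $c$ as scalars in some general identity, but to the ideals $\langle x\rangle$, $\langle b\rangle$, $\langle c\rangle$ that they generate. Since $x\in\langle x\rangle$ and $x=b+c\in\langle b\rangle+\langle c\rangle$, we have $x\in\langle x\rangle\cap(\langle b\rangle+\langle c\rangle)$; by (2) this common set equals $(\langle x\rangle\cap\langle b\rangle)+(\langle x\rangle\cap\langle c\rangle)$, whence I can decompose $x=u+v$ with $u\in\langle x\rangle\cap\langle b\rangle$ and $v\in\langle x\rangle\cap\langle c\rangle$.

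To close the argument, I would invoke the fact that in a commutative semiring with identity the principal ideal generated by an element $z$ is simply $Sz$, so that $\langle z\rangle\subseteq J$ whenever $z\in J$ for an ideal $J$. Applied to $x\in A$, $b\in B$, $c\in C$, this gives $\langle x\rangle\subseteq A$, $\langle b\rangle\subseteq B$, $\langle c\rangle\subseteq C$, so $u\in A\cap B$ and $v\in A\cap C$, yielding $x\in(A\cap B)+(A\cap C)$, as required.

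No serious obstacle is expected; the only subtlety worth flagging is that Definition~\ref{Definition: arithmetic semiring} lists two distributive identities related by lattice duality, so once the meet-over-join form has been established for arbitrary ideals, the join-over-meet form follows automatically, and hypothesis (2), giving only the meet-over-join identity on principal ideals, is enough to recover the full arithmeticity of $S$.
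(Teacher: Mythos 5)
Your proposal is correct and follows essentially the same route as the paper: the paper also reduces the inclusion $I\cap(J+K)\subseteq(I\cap J)+(I\cap K)$ to the principal-ideal hypothesis by writing $x=i=j+k$ and applying (2) to $\langle i\rangle$, $\langle j\rangle$, $\langle k\rangle$, which is exactly your decomposition with $\langle x\rangle$, $\langle b\rangle$, $\langle c\rangle$.
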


\begin{proof}
    (1)$\Rightarrow$(2): Obvious.

    (2)$\Rightarrow$(1): Suppose (2) holds. Let $I$, $J$, and $K$ be any elements of $\mathrm{Id}(S)$. Then $(I\cap J)+(I\cap K)\subseteq I\cap (J+K)$ is always true in any lattice. Let $x\in I\cap (J+K)$. Therefore $x=i$ and $x=j+k$, for some $i\in I$, $j\in J$, and $k\in K$. Then, by the hypothesis, we have 
    \[x\in \langle i \rangle \cap (\langle j \rangle + \langle k \rangle) = (\langle i \rangle \cap \langle j \rangle)+(\langle i \rangle \cap \langle k \rangle) \subseteq (I\cap J)+(I\cap K).\]
    Hence $I\cap (J+K)\subseteq (I\cap J)+(I\cap K)$.
\end{proof}

From Theorem \ref{Ideal lattice of strongly subtractive is distributive}, we know that all strongly subtractive semirings are arithmetic. In the next theorem, we show that the classes of $k$-strongly irreducible and $k$-irreducible ideals coincide in arithmetic semirings. 
\begin{theorem}
    In an arithmetic semiring $S$, a $k$-ideal $I$ is $ k$-strongly irreducible if and only if it is $k$-irreducible.
\end{theorem}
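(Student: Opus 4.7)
The forward direction is immediate and does not use the arithmetic hypothesis: if $I$ is $k$-strongly irreducible and $A\cap B=I$ for $k$-ideals $A,B$, then $A\cap B\subseteq I$ forces $A\subseteq I$ or $B\subseteq I$; combined with $I\subseteq A\cap B\subseteq A,B$, this yields $A=I$ or $B=I$.

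For the converse, assume $I$ is $k$-irreducible and let $A$, $B$ be $k$-ideals of $S$ with $A\cap B\subseteq I$. The natural idea is to manufacture two $k$-ideals whose intersection is exactly $I$, so that $k$-irreducibility can be invoked. The candidates are $C:=\mathcal{C}_k(I+A)$ and $D:=\mathcal{C}_k(I+B)$, which are $k$-ideals containing $I$, $A$, and $I$, $B$ respectively.

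The heart of the argument is to show $C\cap D=I$. First, by the distributivity of $\mathrm{Id}(S)$ granted by the arithmetic hypothesis,
\[
(I+A)\cap (I+B)=I+(A\cap B)\subseteq I+I=I.
\]
Next, applying the intersection-preservation property of $\mathcal{C}_k$ (part (3) of the $k$-closure lemma) together with idempotency $\mathcal{C}_k(\mathcal{C}_k(I))=\mathcal{C}_k(I)=I$, we obtain
\[
C\cap D=\mathcal{C}_k(I+A)\cap \mathcal{C}_k(I+B)=\mathcal{C}_k\bigl((I+A)\cap(I+B)\bigr)\subseteq \mathcal{C}_k(I)=I.
\]
The reverse inclusion $I\subseteq C\cap D$ is obvious, so $C\cap D=I$. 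Since $I$ is $k$-irreducible, $C=I$ or $D=I$, which forces $A\subseteq C=I$ or $B\subseteq D=I$, establishing $k$-strong irreducibility.

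The main obstacle is mildly subtle: the sum $I+A$ of $k$-ideals need not be a $k$-ideal, so one cannot directly feed $I+A$ and $I+B$ into the $k$-irreducibility hypothesis. The fix is to pass to $k$-closures and to rely on the fact that $\mathcal{C}_k$ commutes with finite intersections, which is precisely what makes the distributivity of the ambient ideal lattice transfer to the sublattice of $k$-ideals. Everything else is a routine verification once the candidates $C$ and $D$ are chosen.
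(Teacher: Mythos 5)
Your proposal is correct and follows essentially the same route as the paper: both use the arithmetic hypothesis to get $(I+A)\cap(I+B)=I+(A\cap B)\subseteq I$, then apply the fact that $\mathcal{C}_k$ commutes with intersections to conclude $\mathcal{C}_k(I+A)\cap\mathcal{C}_k(I+B)=I$, and invoke $k$-irreducibility on these two $k$-closures. Your explicit remark that one must pass to $k$-closures because $I+A$ need not be a $k$-ideal is exactly the implicit point in the paper's argument.
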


\begin{proof}
    We know that every $k$-strongly irreducible ideal is $k$-irreducible.
To prove the converse part, let $I$ be a $k$-irreducible ideal of $S$. Let $A \cap B \subseteq I$, where $A$ and $B$ are two $k$-ideals of $S$. Now, since $S$ is an arithmetic semiring, we have $I+(A \cap B)=(I+A) \cap (I+B)$. As $A \cap B \subseteq I$, we get $I=I+(A \cap B)=(I+A) \cap (I+B)$. Then \[\mathcal{C}_k(I+A) \cap \mathcal{C}_k(I+B)=\mathcal{C}_k((I+A) \cap (I+B))=\mathcal{C}_k(I)=I.\] Therefore, we have $\mathcal{C}_k(I+A)=I$ or $\mathcal{C}_k(I+B)=I$. This further implies $I=I+A$ or $I=I+B$. So, $A \subseteq I$ or $B \subseteq I$. Hence $I$ is $k$-strongly irreducible.
\end{proof}

\smallskip
\section{On quotient semirings}\label{Section 4: k-ideals of quotient semirings}
In this section, we investigate the behaviour of $k$-ideals, $k$-irreducible ideals, and $k$-strongly irreducible ideals under semiring homomorphisms and passage to quotient semirings. 
The following proposition shows that $k$-prime ideals are stable under $k$-contractions.

\begin{proposition}
    Let $S$ and $S'$ be two semirings and $\varphi\colon S\to S'$ be a semiring homomorphism. if $J$ is a $k$-prime ideal of $S'$, then the $k$-contraction of $J$, $J^c$ is a $k$-prime ideal of $S$. 
\end{proposition}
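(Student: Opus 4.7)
The plan is to invoke the exchange principle recalled in the introduction — namely that a $k$-ideal is $k$-prime if and only if it is also prime in the usual sense — so as to reduce the claim to two separate, elementary verifications about $J^c = \varphi^{-1}(J)$: that it is a $k$-ideal of $S$, and that it is a prime ideal of $S$.

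For the $k$-ideal property, I would begin from the routine observation that the preimage of an ideal under a semiring homomorphism is an ideal (closure under addition, multiplicative absorption, and containment of $0$ all transport through $\varphi$, which preserves both operations and sends $0_S$ to $0_{S'}$). To upgrade subtractiveness, suppose $x + y \in J^c$ with $y \in J^c$; applying $\varphi$ yields $\varphi(x) + \varphi(y) = \varphi(x+y) \in J$ and $\varphi(y) \in J$, whence the $k$-property of $J$ forces $\varphi(x) \in J$, i.e., $x \in J^c$.

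For primeness, take $a, b \in S$ with $ab \in J^c$ and $a \notin J^c$. Then $\varphi(a)\varphi(b) = \varphi(ab) \in J$ while $\varphi(a) \notin J$; since $J$ is $k$-prime and hence prime, we conclude $\varphi(b) \in J$, that is, $b \in J^c$. Combining the two verifications and reapplying the exchange principle yields that $J^c$ is $k$-prime.

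I do not foresee any genuine obstacle: everything is elementary once one peels $k$-primeness into its two constituent properties via the exchange principle. The one subtlety worth flagging is that a direct attack from the ideal-level definition would be less clean, since for a $k$-ideal $A$ of $S$ the forward image $\varphi(A)$ need not be a $k$-ideal of $S'$, so one would have to pass to its $k$-closure $\mathcal{C}_k(\langle\varphi(A)\rangle)$ before exploiting the $k$-primeness of $J$; the exchange principle lets us sidestep this manoeuvre entirely.
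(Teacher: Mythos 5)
Your proposal is correct and follows essentially the same route as the paper: both reduce $k$-primeness of $J^c$ to the element-wise prime condition (via the exchange principle) and transfer it through $\varphi$, the paper simply stating ``we only prove the prime part'' and omitting the routine verification that $\varphi\inv(J)$ is a $k$-ideal, which you spell out. No gaps.
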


\begin{proof}
    
    We only prove the `prime part'. Let $xy\in J^c=\varphi\inv(J)$. This implies $\varphi(xy)\in J$, which further implies $\varphi(x)\varphi(y)\in J$. Then $\varphi(x)\in J$ or $\varphi(y)\in J$. Therefore $x\in \varphi\inv(J)=J^c$ or $y\in \varphi\inv(J)=J^c$. Thus $J^c$ is a $k$-prime ideal of $S$. 
\end{proof}

To prove the analogous result for the class of $k$-strongly irreducible ideals, we need to restrict the semiring homomorphism. Note that for rings, the following conditions are equivalent for a homomorphism $\varphi\colon S\to S'$,
\begin{enumerate}
    \item $\mathrm{ker}\varphi\subseteq \langle x \rangle $ for each $x\notin \mathrm{ker}\varphi$.
    
    \item If $\varphi(a)=\varphi(b) \neq 0$, then $\langle a \rangle =\langle b \rangle$.
\end{enumerate}
In fact, if $\varphi$ is surjective and satisfies any one of the above conditions, then $\varphi( \langle a \rangle \cap \langle b \rangle)= \langle \varphi(a) \rangle \cap \langle \varphi (b) \rangle $.

\begin{proposition}\label{Stableness of k-strongly irreducible ideals}
Let $\varphi\colon S\to S'$ be a surjective semiring homomorphism that satisfies the property: if $\varphi(a)=\varphi(b) \neq 0$, then $\langle a \rangle =\langle b \rangle$. If $J$ is a $k$-strongly irreducible ideal of $S'$, then $J^c$ is a
$k$-strongly irreducible ideal of $S$.
\end{proposition}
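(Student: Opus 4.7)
The plan is to invoke the $i_k$-system characterisation established in Theorem~\ref{loc11}, thereby reducing $k$-strong irreducibility of $J^c$ to a statement about principal $k$-closures, and then to transfer that inclusion across $\varphi$ using the very property imposed on the homomorphism.

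First, I note the preliminary observation that $J^c=\varphi^{-1}(J)$ is automatically a $k$-ideal of $S$: if $x+y\in\varphi^{-1}(J)$ and $y\in\varphi^{-1}(J)$, then $\varphi(x)+\varphi(y)\in J$ and $\varphi(y)\in J$, so $\varphi(x)\in J$ since $J$ is a $k$-ideal, forcing $x\in J^c$. Hence it remains to verify the strong-irreducibility clause, and by Theorem~\ref{loc11} it suffices to show: for any $a,b\in S$ with $\mathcal{C}_k(\langle a\rangle)\cap\mathcal{C}_k(\langle b\rangle)\subseteq J^c$, one has $a\in J^c$ or $b\in J^c$.

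Fix such $a,b$. Since $\langle a\rangle\cap\langle b\rangle\subseteq \mathcal{C}_k(\langle a\rangle)\cap\mathcal{C}_k(\langle b\rangle)\subseteq\varphi^{-1}(J)$, applying $\varphi$ gives $\varphi(\langle a\rangle\cap\langle b\rangle)\subseteq J$. The pivotal step is the push-forward identity
\[
\varphi(\langle a\rangle\cap\langle b\rangle)=\langle\varphi(a)\rangle\cap\langle\varphi(b)\rangle,
\]
whose non-trivial inclusion uses both surjectivity and the hypothesis: given $y\in\langle\varphi(a)\rangle\cap\langle\varphi(b)\rangle$, surjectivity lets us write $y=\varphi(as)=\varphi(bt)$ for some $s,t\in S$; if $y\neq 0$, the stated property forces $\langle as\rangle=\langle bt\rangle$, placing $as$ in $\langle a\rangle\cap\langle b\rangle$ with $\varphi(as)=y$, while the case $y=0$ is trivial since $0\in\varphi(\langle a\rangle\cap\langle b\rangle)$. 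Consequently $\langle\varphi(a)\rangle\cap\langle\varphi(b)\rangle\subseteq J$, and taking $k$-closures (using that the $k$-closure commutes with finite intersections and that $\mathcal{C}_k(J)=J$) yields $\mathcal{C}_k(\langle\varphi(a)\rangle)\cap\mathcal{C}_k(\langle\varphi(b)\rangle)\subseteq J$.

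Now Theorem~\ref{loc11} applied to the $k$-strongly irreducible ideal $J$ in $S'$ delivers $\varphi(a)\in J$ or $\varphi(b)\in J$, i.e.\ $a\in J^c$ or $b\in J^c$, completing the verification. The main obstacle is precisely the push-forward identity above; the surjectivity hypothesis alone only yields $\varphi(\langle a\rangle\cap\langle b\rangle)\subseteq\langle\varphi(a)\rangle\cap\langle\varphi(b)\rangle$, and without the additional condition on $\varphi$ there is no reason for the reverse inclusion to hold in the semiring setting, where the absence of additive inverses prevents the standard ring-theoretic argument. Once this identity is secured, the rest is a formal transfer between $S$ and $S'$ via the $i_k$-system criterion.
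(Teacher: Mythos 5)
Your proof is correct. It rests on the same two pillars as the paper's argument: the element-wise criterion of Theorem~\ref{loc11} and the push-forward identity $\varphi(\langle a\rangle\cap\langle b\rangle)=\langle\varphi(a)\rangle\cap\langle\varphi(b)\rangle$, combined with the fact that $\mathcal{C}_k$ commutes with intersections. The difference is the direction of transfer: the paper argues contrapositively, taking $a,b\notin J^c$, using the $i_k$-system property of $S'\setminus J$ to produce $s\notin J$ in $\mathcal{C}_k(\langle\varphi(a)\rangle)\cap\mathcal{C}_k(\langle\varphi(b)\rangle)=\mathcal{C}_k(\varphi(\langle a\rangle\cap\langle b\rangle))$, and then transporting $s$ back via surjectivity to an element $s'\in\mathcal{C}_k(\langle a\rangle\cap\langle b\rangle)\setminus J^c$; you instead push the inclusion forward, deducing $\langle\varphi(a)\rangle\cap\langle\varphi(b)\rangle\subseteq J$ from $\mathcal{C}_k(\langle a\rangle)\cap\mathcal{C}_k(\langle b\rangle)\subseteq J^c$ and applying the strong irreducibility of $J$ in $S'$ directly. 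Your direction is arguably cleaner: the only point where surjectivity and the principal-ideal hypothesis enter is the push-forward identity, which you prove in the semiring setting (the paper records it only for rings and then uses it implicitly), including the necessary $y=0$ case excluded by the hypothesis; the paper's back-transfer step, by contrast, needs a second, tacit use of the hypothesis to see that $s'+t_1\in\langle a\rangle\cap\langle b\rangle$. Both arguments rely equally on the cited property $\mathcal{C}_k(A\cap B)=\mathcal{C}_k(A)\cap\mathcal{C}_k(B)$ and on $J^c$ being a $k$-ideal, which you also verify explicitly.
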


\begin{proof}
Suppose $a\notin J^c$ and $b\notin J^c$. It follows that $\varphi(a)$, $\varphi(b) \notin J$. Since $J$ is a $k$-strongly irreducible ideal, there exists \[s\in \mathcal{C}_k ( \langle \varphi (a) \rangle ) \cap \mathcal{C}_k (\langle \varphi (b) \rangle ) =\mathcal{C}_k( \langle \varphi (a) \rangle \cap \langle  \varphi (b) \rangle )=\mathcal{C}_k (\varphi (\langle a \rangle \cap \langle b \rangle))\] such that $s \notin J$. Since $\varphi$ is surjective, there exists $s' \in S$ such that $s=\varphi(s')$. Furthermore, there must exist $t_1$, $t_2 \in \langle a \rangle \cap \langle b \rangle$ such that $\varphi (s) + \varphi (t_1) =\varphi(t_2)$. Thus $s' \in \mathcal{C}_k (\langle a \rangle \cap \langle b \rangle )$ with $s' \notin J^c$.
\end{proof}

Recall that a \emph{congruence} on a semiring $S$ is an equivalence relation on $S$ that is also a subsemiring of $S \times S$. We begin by recalling from  \cite[Page 78]{Golan} the notion of a \emph{Bourne congruence} associated with an ideal $I$ of $S$.

\begin{definition}
    Let $S$ be a semiring and $I$ be an ideal of $S$. We define a relation on $S$ given by $r_1 \sim r_2$ if and only if there exist $i_1$, $i_2 \in I$ such that $r_1+i_1=r_2+i_2$. This defines an equivalence relation and in fact a congruence on $S$, and equivalence classes are denoted by $r+I$ and the collection of equivalence classes by $S/I$. Then $S/I$ is a semiring with respect to the operations $(r_1+I)+(r_2+I)=r_1+r_2+I$ and $(r_1+I)(r_2+I)=r_1r_2+I$. 
\end{definition}

The following lemmas summarise the basic properties of ideals and $k$-ideals in quotient semirings; we mention them without proof.

\begin{lemma}\label{loc12}
    Let $S$ be a semiring. Then the following hold:
    \begin{enumerate}
        \item If $a\in I$, then $a+I=I$, for all ideals $I$ of $S$.
        \item If $I$ is a $k$-ideal of $S$, then $c+I=I$ if and only if $c \in I$.
        \item $I$, $J$ are ideals of $S$ such that $I\subseteq J$, then $J/I$ is an ideal of $S/I$. In particular, if $J$ is an $k$-ideal of $S$, then $J/I$ is a $k$-ideal of $S/I$.
        \item if $1+I\in J/I$, for ideals $I$, $J$ with $I \subseteq J$, then $S/I =J/I$.
    \end{enumerate}
\end{lemma}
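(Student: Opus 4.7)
The plan is to unfold the Bourne congruence $r_1 \sim r_2 \iff r_1 + i_1 = r_2 + i_2$ for some $i_1, i_2 \in I$, and verify each item by direct manipulation of equivalence classes; the symbol $I$ on the right-hand side of $a+I=I$ is read as the zero class $0+I$ of $S/I$. For (1), I would verify $a \sim 0$ by taking $i_1 = 0$ and $i_2 = a$ in the defining relation (permissible since $a \in I$), so that $a + 0 = 0 + a$ gives $a + I = I$. For (2), the backward implication is just (1); for the forward direction, $c+I=I$ gives $c \sim 0$, so $c+i_1 = i_2$ for some $i_1, i_2 \in I$, hence $c + i_1 \in I$, and the $k$-ideal hypothesis on $I$ yields $c \in I$.

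For (3), the subset $J/I \subseteq S/I$ inherits the semiring operations: closure under addition and absorption by $S/I$ follow from $(j_1+I)+(j_2+I)=(j_1+j_2)+I$ and $(s+I)(j+I)=sj+I$ with $j_1+j_2, sj \in J$, and the zero class lies in $J/I$ by (1). For the $k$-ideal assertion, assume $J$ is a $k$-ideal and that $(x+y)+I \in J/I$ and $y+I \in J/I$. Transcribing these as Bourne relations gives $x+y+i_1 = j_1+i_2$ and $y+i_3 = j_2+i_4$ with $i_* \in I \subseteq J$ and $j_* \in J$. Adding $i_3$ to the first equation and substituting the second produces
\[
x + (j_2+i_4+i_1) \;=\; j_1+i_2+i_3,
\]
an equation of the form $x + \alpha = \beta$ with $\alpha, \beta \in J$; since $J$ is a $k$-ideal, this forces $x \in J$, and hence $x+I \in J/I$.

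For (4), the hypothesis $1+I \in J/I$ yields $1+i_1 = j+i_2$ for some $j \in J$ and $i_1, i_2 \in I$. Multiplying by an arbitrary $s \in S$ gives $s+si_1 = sj+si_2$, and since $sj \in J$ and $si_2 \in I \subseteq J$, this exhibits $s \sim sj$ with an element of $J$ on the right, so $s+I \in J/I$ and therefore $S/I = J/I$. The main obstacle will be (3): the $k$-ideal hypothesis on $J/I$ must be pulled back to an element-level equation in $S$ by coherently combining two independent Bourne relations, after which the $k$-ideal property of $J$ can be applied to isolate the element $x$.
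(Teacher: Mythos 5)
Your proof is correct: reading $a+I=I$ as equality with the zero class is the intended interpretation (the naive set-theoretic reading fails already in $\mathds{N}$), and each of the four verifications — in particular the combination of the two Bourne relations in (3) to produce $x+\alpha=\beta$ with $\alpha,\beta\in J$ before invoking subtractivity of $J$, which uses $I\subseteq J$ — is sound. The paper states this lemma explicitly without proof, and your direct unfolding of the Bourne congruence is exactly the routine argument it leaves to the reader.
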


\begin{lemma}\label{loc13}
    Let $I$ be an ideal of a semiring $S$. If $J$, $K$, $L$ are $k$-ideals containing $I$, then $(J/I)\cap (K/I)=L/I$ if and only if $J\cap K=L$.
\end{lemma}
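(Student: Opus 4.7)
The plan is to verify both implications by unpacking the Bourne congruence and repeatedly invoking the fact that $J$, $K$, and $L$ are $k$-ideals containing $I$. The main subtlety to keep in mind is that in a semiring the membership $x+I \in J/I$ does not directly mean $x\in J$; it only means that $x+i_1 = y+i_2$ for some $y\in J$ and $i_1,i_2\in I$. The key observation, used over and over, will be: since $I\subseteq J$ and $J$ is a $k$-ideal, the relation $x + i_1 = y + i_2$ with $y, i_2 \in J$ forces $x+i_1 \in J$, and then $i_1 \in J$ together with the $k$-ideal property yields $x\in J$. The same reasoning applies to $K$ and $L$.

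For the direction $J\cap K=L \Longrightarrow (J/I)\cap (K/I)=L/I$, the inclusion $L/I \subseteq (J/I)\cap(K/I)$ is immediate from $L\subseteq J$ and $L\subseteq K$, using Lemma \ref{loc12}(3). For the reverse inclusion, take $x+I \in (J/I)\cap(K/I)$, so there exist $y\in J$, $z\in K$ and $i_1,i_2,j_1,j_2\in I$ with $x+i_1=y+i_2$ and $x+j_1=z+j_2$. The key observation above applied to each equation produces $x\in J$ and $x\in K$, hence $x\in J\cap K = L$ and $x+I\in L/I$.

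For the converse direction, assume $(J/I)\cap(K/I)=L/I$. Given $x\in L$, we have $x+I \in L/I = (J/I)\cap(K/I)$, and the same $k$-ideal argument shows $x\in J$ and $x\in K$, so $L \subseteq J\cap K$. Conversely, if $x\in J\cap K$, then $x+I$ lies in both $J/I$ and $K/I$, hence in $L/I$, so $x+I = y+I$ for some $y\in L$; this gives $x+i_1 = y + i_2$ with $i_1,i_2\in I\subseteq L$, and the $k$-ideal property of $L$ yields $x\in L$. The only real obstacle is precisely this coset-to-representative translation, which is exactly why the hypothesis that $J$, $K$, $L$ are $k$-ideals containing $I$ is essential; everything else is routine bookkeeping with the definition of the Bourne congruence.
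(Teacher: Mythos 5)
Your argument is correct: the repeated use of the $k$-ideal property together with $I\subseteq J,K,L$ to pass from a coset identity $x+i_1=y+i_2$ to actual membership ($x\in J$, $x\in K$, or $x\in L$) is exactly the point of the hypothesis, and all four inclusions are handled properly. The paper states Lemma~\ref{loc13} without proof, and your write-up is precisely the routine verification the authors leave to the reader, so there is nothing to add.
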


In \cite[Theorem 2.7]{AtaniGarfami13}, Atani and Garfami established one direction of the correspondence for $k$-strongly irreducible ideals in quotient semirings. The following theorem strengthens this result by proving the converse in the setting of arithmetic semirings.
\begin{theorem}\label{k-strongly irreducible ideals in quotient semirings}
    Let $S$ be a semiring. $I$ is an ideal of $S$ and $J$ be a $k$-strongly irreducible ideal of $S$ containing $I$. Then $J/I$ is a $k$-strongly irreducible ideal of $S/I$. The converse is true if $S$ is an arithmetic semiring.
\end{theorem}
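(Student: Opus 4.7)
The plan is to pass through the correspondence between $k$-ideals of $S$ containing $I$ and $k$-ideals of $S/I$ implemented by the canonical surjection $\pi\colon S\to S/I$, together with Lemma~\ref{loc13} which turns intersections in $S/I$ into intersections in $S$.

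For the forward direction (which does not require the arithmetic hypothesis), first note that $J/I$ is a $k$-ideal of $S/I$ by Lemma~\ref{loc12}(3). Given $k$-ideals $A'$, $B'$ of $S/I$ with $A'\cap B'\subseteq J/I$, I would pull back to $A=\pi^{-1}(A')$ and $B=\pi^{-1}(B')$, both $k$-ideals of $S$ containing $I$ (preimages of $k$-ideals under a semiring homomorphism are $k$-ideals), and satisfying $A/I=A'$, $B/I=B'$. By Lemma~\ref{loc13}, $(A\cap B)/I=A'\cap B'\subseteq J/I$. Unwinding the Bourne congruence — if $x\in A\cap B$, then $x+i_1=j+i_2$ for some $j\in J$ and $i_1,i_2\in I\subseteq J$, and since $J$ is a $k$-ideal, $x\in J$ — gives $A\cap B\subseteq J$. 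The $k$-strong irreducibility of $J$ then yields $A\subseteq J$ or $B\subseteq J$, which descends to $A'\subseteq J/I$ or $B'\subseteq J/I$.

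For the converse, assume $S$ is arithmetic and take $k$-ideals $A$, $B$ of $S$ with $A\cap B\subseteq J$. The arithmetic identity $(J+A)\cap(J+B)=J+(A\cap B)$ combined with $A\cap B\subseteq J$ gives $(J+A)\cap(J+B)=J$. Applying the $k$-closure, which commutes with finite intersections, produces
\[
\mathcal{C}_k(J+A)\cap\mathcal{C}_k(J+B)=\mathcal{C}_k(J)=J.
\]
Both $\mathcal{C}_k(J+A)$ and $\mathcal{C}_k(J+B)$ are $k$-ideals of $S$ containing $I$, so their images in $S/I$ are $k$-ideals whose intersection equals $J/I$ by Lemma~\ref{loc13}. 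Since $J/I$ is $k$-strongly irreducible, one of these images lies inside $J/I$. Lifting back through $\pi$ and using $I\subseteq J$ together with the $k$-ideal property of $J$, I conclude $\mathcal{C}_k(J+A)\subseteq J$ (or the symmetric statement), and hence $A\subseteq J$ (or $B\subseteq J$).

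The main technical hurdle is the translation between $S$ and $S/I$ in both directions: one must verify that containments at the level of $S/I$ genuinely reflect containments in $S$ without losing information through the Bourne congruence. The key observation that does the work is that whenever $a+i_1=j+i_2$ with $j\in J$ and $i_1,i_2\in I\subseteq J$, the $k$-ideal property of $J$ forces $a\in J$, which absorbs the $I$-perturbations inherent to passing to the quotient.
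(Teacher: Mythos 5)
Your proof is correct, but it diverges from the paper's in both halves. For the forward direction the paper simply cites Atani--Garfami \cite[Theorem 2.7]{AtaniGarfami13}, whereas you give a self-contained argument by pulling $A'$, $B'$ back along the canonical surjection $\pi\colon S\to S/I$; this is clean and makes the section independent of the external reference. For the converse the paper works with the ideals $A+I$ and $B+I$, using arithmeticity in the form $(A\cap B)+I=(A+I)\cap(B+I)$ and then passing to $(A+I)/I$, $(B+I)/I$ in the quotient; you instead add $J$ and take $k$-closures, obtaining $\mathcal{C}_k(J+A)\cap\mathcal{C}_k(J+B)=\mathcal{C}_k\bigl(J+(A\cap B)\bigr)=J$, exactly as in the paper's earlier result that $k$-irreducible and $k$-strongly irreducible ideals coincide in arithmetic semirings. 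Your variant buys two things: the ideals $\mathcal{C}_k(J+A)$ and $\mathcal{C}_k(J+B)$ are genuine $k$-ideals of $S$ containing $I$, so Lemma~\ref{loc12}(3) and Lemma~\ref{loc13} apply verbatim and the hypothesis on $J/I$ is invoked only on bona fide $k$-ideals of $S/I$ (the paper's route quietly treats $(A+I)/I$ as such, which is not obvious since $A+I$ need not be a $k$-ideal); moreover, since your intersection in the quotient is an equality, you only use $k$-irreducibility of $J/I$, so your argument in fact proves a slightly stronger converse. The lifting step in both directions --- absorbing the $I$-perturbations $x+i_1=j+i_2$ via $I\subseteq J$ and the subtractivity of $J$ --- is the same device the paper uses.
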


\begin{proof}
    For the first part of the proof, see \cite[Theorem 2.7]{AtaniGarfami13}.
For the converse, suppose $S$ is an arithmetic semiring and let $A\cap B \subseteq I$, where $A$, $B$ are two $k$-ideals of $S$. Then $(A\cap B)+I\subseteq J+I=J$, as $I\subseteq J$. This implies $(A+I) \cap (B+I) \subseteq J$, which further implies, $((A+I)/I)\cap ((B+I)/I) \subseteq J/I$ by Lemma~\ref{loc13}. As, $J/I$ is a $k$-strongly irreducible ideal, $((A+I)/I) \subseteq J/I$ or $((B+I)/I) \subseteq J/I$. Let $x \in A \subseteq A+I$. Then $x+I \in ((A+I)/I) \subseteq J/I$. Therefore, $x+I = j+I$, for some $j \in J$, which implies $x+a_1=j+a_2$ for some $a_1$, $a_2 \in I$. This follows, $x \in J$ as $J$ is a $k$-ideal of $S$. Thus $A \subseteq J$. Similarly, if $((B+I)/I) \subseteq J/I$, we have $B \subseteq J$. Hence, $J$ is $k$-strongly irreducible.
\end{proof}

The following result is significant in the sense that it shows $k$-irreducibility is preserved and reflected under passage to quotients, allowing the study of $k$-irreducible ideals to be reduced to the zero ideal in an appropriate quotient semiring.

\begin{theorem}
    Let $I$ be a $k$-ideal in a semiring $S$. Then $I$ is $k$-irreducible if and only if the zero ideal $\langle \overline{0}\rangle$=$I/I$ in $S/I$ is $k$-irreducible.
\end{theorem}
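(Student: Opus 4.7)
The plan is to exploit the standard order-preserving correspondence between $k$-ideals of $S$ containing $I$ and $k$-ideals of $S/I$, together with the intersection formula recorded in Lemma~\ref{loc13}. Both directions reduce to the observation that if $A \cap B = I$ then automatically $I \subseteq A$ and $I \subseteq B$, so all relevant ideals are eligible to be pushed down to, or lifted from, the quotient.

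For the forward direction, suppose $I$ is $k$-irreducible in $S$. Let $\overline{A}$, $\overline{B}$ be $k$-ideals of $S/I$ with $\overline{A} \cap \overline{B} = I/I$. By the ideal correspondence (Lemma~\ref{loc12}(3)), we may write $\overline{A} = A/I$ and $\overline{B} = B/I$ for uniquely determined $k$-ideals $A$, $B$ of $S$ containing $I$. Lemma~\ref{loc13} then gives $A \cap B = I$, and $k$-irreducibility of $I$ forces $A = I$ or $B = I$, that is, $\overline{A} = I/I$ or $\overline{B} = I/I$.

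For the converse, assume $I/I$ is $k$-irreducible in $S/I$, and let $A$, $B$ be $k$-ideals of $S$ with $A \cap B = I$. Since $I = A \cap B \subseteq A$ and similarly $I \subseteq B$, Lemma~\ref{loc12}(3) tells us that $A/I$ and $B/I$ are $k$-ideals of $S/I$. Applying Lemma~\ref{loc13} in the reverse direction gives $(A/I) \cap (B/I) = I/I$, and $k$-irreducibility of the zero ideal in $S/I$ yields $A/I = I/I$ or $B/I = I/I$. Translating back through the correspondence produces $A = I$ or $B = I$, as required.

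There is no real obstacle here beyond a careful bookkeeping of the correspondence: the automatic inclusion $I \subseteq A \cap B$ ensures that we never have to enlarge $A$ or $B$ artificially, so Lemma~\ref{loc13} applies directly in both directions and nothing needs to be done separately for the case where one of the ideals fails to contain $I$.
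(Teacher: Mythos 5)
Your proof is correct and follows essentially the same route as the paper: both directions reduce, via Lemma~\ref{loc13} and the correspondence between $k$-ideals of $S/I$ and $k$-ideals of $S$ containing $I$, to the equivalence $A\cap B=I$ if and only if $(A/I)\cap(B/I)=I/I$, with the subtractive property of $I$ ensuring that $A/I=I/I$ forces $A=I$. The only cosmetic point is that Lemma~\ref{loc12}(3) gives just the pushforward direction of the correspondence, so the lifting of a $k$-ideal of $S/I$ to a $k$-ideal of $S$ containing $I$ (via the preimage under the quotient map) is used implicitly---exactly as in the paper's own proof.
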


\begin{proof}
    Suppose, $I$ is a $k$-irreducible ideal of $S$ and $I\neq S$. Then, there exists $a \in S$ such that $a+I\neq I$. As, $I$ is a $k$-ideal, the zero ideal $\langle \overline{0}\rangle$ is also a $k$-ideal, by Lemma~\ref{loc12}. Let $J/I \cap H/I = \langle \overline{0}\rangle$. Then, by Lemma~\ref{loc13}, $J\cap H =I$. As, $I$ is $k$-irreducible, $J=I$ or $H=I$. Therefore, $J/I=\langle \overline{0}\rangle$, or $H/I = \langle \overline{0}\rangle$.
    
Conversely, let $\langle \overline{0}\rangle$ in $S/I$ is $k$-irreducible. Suppose, $J \cap H =I$, where $J$, $H$ are $k$-ideals of $S$. Then $J/I$ and $H/I$ are also $k$-ideals of $S$ such that $J/I \cap H/I = \langle \overline{0}\rangle$. So, $J/I = \langle \overline{0}\rangle$ or $H/I=\langle \overline{0}\rangle$. If $J/I=\langle \overline{0}\rangle$, we have  $a+I=I$, for all $a \in J$. Therefore, $a \in I$. So, $J \subseteq I$. Now, if $y \in I$, then $y+I=I$. This implies $y+I \in J/I$. So, $y+i_1 = j+i_2$, for some $j \in J$ and $i_1$, $i_2 \in I$. Therefore, $y \in J$ as $J$ is a $k$-ideal and $I \subseteq J$. Thus $J=I$. Similarly, if $K/I = \langle \overline{0}\rangle$, then we have $K=I$. 
\end{proof}

\smallskip
\section{Under localisation}\label{Section 5: k-ideals under localisation}

Our aim in this section is to study the behaviour of the class of $k$-ideals and its subclasses under localisation. 
Let $S$ be a semiring and let $T$ be a multiplicatively closed subset of $S$. For each $k$-ideal $I$
of the semiring $T\inv{S}$, we consider
\begin{equation}
    I^c:= \{x \in S \mid x/1 \in I \} = I \cap S,\quad
\text{and}\quad  C:= \{I^c \mid I \;\text{is an ideal of}\; T\inv S. \tag{$\star$}\label{Equation of C}
\end{equation}

\begin{lemma}
    Let $S$ be a semiring and $T$ be a multiplicatively closed subset of $S$. If $I \in C$, then $(T\inv I)^c =I$.
\end{lemma}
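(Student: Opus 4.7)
The plan is to exploit the standard Galois-type contraction-extension duality from localisation theory, adapted to the semiring setting. Since $I \in C$, by the definition in $(\star)$ there exists an ideal $J$ of $T\inv S$ such that $I = J^c = J \cap S$. The inclusion $I \subseteq (T\inv I)^c$ is always true for any ideal of $S$: if $x \in I$, then $x/1 \in T\inv I$ by definition of the extension, so $x \in (T\inv I)^c$. Hence the entire content of the lemma lies in the reverse inclusion $(T\inv I)^c \subseteq I$, and this is where we must use the hypothesis that $I$ is a contracted ideal rather than an arbitrary one.

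My strategy for the reverse inclusion is to first establish the intermediate identity $T\inv I \subseteq J$. Given any generator $i/t$ of $T\inv I$ with $i \in I = J \cap S$ and $t \in T$, note that $i/1 \in J$ because $i \in J$, and therefore $i/t = (1/t) \cdot (i/1)$ lies in $J$ since $J$ is an ideal of $T\inv S$. Once $T\inv I \subseteq J$ is in hand, contracting both sides back to $S$ yields $(T\inv I)^c \subseteq J^c = I$, which together with the trivial forward inclusion gives $(T\inv I)^c = I$, as required.

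The main obstacle, such as it is, is not conceptual but notational: one must verify carefully that the rule $i/t = (1/t) \cdot (i/1)$ holds in the semiring localisation $T\inv S$, which follows directly from the multiplicative structure on the equivalence classes in Golan's construction \cite[Chapter 11]{Golan}. No subtraction is used anywhere, so the proof carries over from the ring case without modification. Note also that the argument does not require $I$ or $J$ to be $k$-ideals; the lemma is a purely set-theoretic consequence of the definition of $C$, although in subsequent applications it will be invoked for $k$-ideals.
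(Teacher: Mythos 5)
Your proof is correct, and it reaches the conclusion by a slightly different route than the paper. The paper argues elementwise on the contraction: given $y\in (T\inv I)^c$, it writes $\frac{y}{1}=\frac{i}{s}$ with $i\in I$, $s\in T$, deduces $ys\in I=J^c$, hence $\frac{ys}{1}\in J$, and then recovers $\frac{y}{1}=(\frac{1}{s})(\frac{ys}{1})\in J$, so $y\in J^c=I$. You instead prove the containment $T\inv I\subseteq J$ first (each element $i/t$ of $T\inv I$ satisfies $i/1\in J$ since $I=J^c$, and $J$ absorbs multiplication by $1/t$), and then contract both sides, using only the monotonicity of contraction; in the standard notation this is the identity $J^{cec}=J^c$. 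The two arguments rest on the same two ingredients, namely the hypothesis $I=J^c$ and multiplication by the unit fraction $1/s$ inside the ideal $J$, so conceptually they are close; but your version is marginally more robust, because it never manipulates an equality of fractions. The paper's step ``$\frac{y}{1}=\frac{i}{s}$ implies $ys=i$'' is, in a general semiring localisation, only valid up to a multiplier $u\in T$ (one gets $uys=ui$), a harmless but real gap that your membership-only argument avoids entirely. Your closing observation that the $k$-ideal hypothesis on $J$ is never used is also accurate: the lemma is a purely order-theoretic consequence of the definition of $C$.
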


\begin{proof}
    Let $x\in I$. Then $x/1 \in T\inv I$ implies $x \in (T\inv I)^c$. Therefore, $I \subseteq (T\inv I)^c$.
For the reverse inclusion, suppose $y \in (T\inv I)^c$. As $I \in C$, we have $I=J^c$, for some $k$-ideal $J$ of $T\inv S$. Now $y \in (T\inv I)^c $ implies $\frac{y}{1}=\frac{i}{s}$, for some $i\in I$ and $s\in T$. Then $ys=i \in I$, which implies $ys \in I=J^c$. Therefore $\frac{ys}{1} \in J$. It follows that $\frac{y}{1}=(\frac{1}{s})(\frac{ys}{1})\in J$. So, $y \in J^c=I$. Hence $(T\inv I)^c=I$. 
\end{proof}

In \cite[Theorem 3.1]{Azizi08}, Azizi established a correspondence between proper strongly irreducible ideals of a localized ring and proper strongly irreducible ideals of the ring itself. The next theorem extends this correspondence to the setting of semirings by considering subtractive ideals, thereby generalizing Azizi's result beyond the classical ring-theoretic framework.
\begin{theorem} \label{loc}
Let $S$  be a semiring and $T$ be a multiplicatively closed subset of $S$. Then
there is a one-to-one correspondence between the $k$-proper strongly irreducible ideals of $T\inv{S}$ 
and $k$-proper strongly irreducible ideals of $S$  contained in $C$ (defined in (\ref{Equation of C})) which do not meet $T$.
\end{theorem}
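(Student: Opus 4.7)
The plan is to exhibit mutually inverse maps $\Phi\colon I \mapsto I^c$ and $\Psi\colon J \mapsto T\inv J$ between the two collections. The preceding lemma already gives $(T\inv J)^c = J$ for $J \in C$; the companion identity $T\inv(I^c) = I$ for any $k$-ideal $I$ of $T\inv S$ is a direct localisation check, since $x/t \in I$ forces $x/1 = (t/1)(x/t) \in I$, whence $x \in I^c$. So $\Phi$ and $\Psi$ are mutually inverse as soon as each carries its source into the claimed target.

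I would first verify the structural side-conditions. For $\Phi$: if $I$ is a $k$-proper strongly irreducible ideal of $T\inv S$, then $I^c$ lies in $C$ by definition and is a $k$-ideal (contraction visibly preserves the $k$-condition); properness of $I$ forces $I^c \cap T = \emptyset$, since any $t \in T \cap I^c$ would give $t/1 \in I$, hence the unit $1/1 = (1/t)(t/1) \in I$. For $\Psi$: if $J \in C$ is $k$-proper strongly irreducible with $J \cap T = \emptyset$, then $T\inv J$ is a $k$-ideal of $T\inv S$, and $1/1 \in T\inv J$ would force $st \in J$ for some $s, t \in T$, contradicting disjointness.

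The heart of the argument is preservation of $k$-strong irreducibility in both directions. For the forward direction, given $k$-ideals $A$, $B$ of $S$ with $A \cap B \subseteq I^c$, localising and combining the standard identity $T\inv(A \cap B) = T\inv A \cap T\inv B$ with $T\inv(I^c) = I$ gives $T\inv A \cap T\inv B \subseteq I$; the $k$-strong irreducibility of $I$ then forces one of $T\inv A$, $T\inv B$ into $I$, and contracting yields $A \subseteq (T\inv A)^c \subseteq I^c$ (or the analogous statement for $B$). For the backward direction, given $k$-ideals $\mathcal{A}$, $\mathcal{B}$ of $T\inv S$ with $\mathcal{A} \cap \mathcal{B} \subseteq T\inv J$, contraction commutes with intersection and produces $\mathcal{A}^c \cap \mathcal{B}^c = (\mathcal{A} \cap \mathcal{B})^c \subseteq (T\inv J)^c = J$, with $\mathcal{A}^c$, $\mathcal{B}^c$ still $k$-ideals of $S$; the $k$-strong irreducibility of $J$ then gives, say, $\mathcal{A}^c \subseteq J$, after which localising recovers $\mathcal{A} = T\inv(\mathcal{A}^c) \subseteq T\inv J$.

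The main obstacle I anticipate is the bookkeeping around the $k$-ideal condition at each stage: unlike in the ring setting (where every ideal is a $k$-ideal for free), one must explicitly confirm that both extension and contraction send $k$-ideals to $k$-ideals and that localisation commutes with intersection of $k$-ideals. These preservation facts are standard in semiring theory but need some care because the $k$-closure operator can enlarge ideals; once they are in hand, the bijection and the preservation of $k$-strong irreducibility fall out of the chain of equalities described above.
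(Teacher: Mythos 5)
Your proposal is correct and follows essentially the same route as the paper's proof: both directions hinge on the identities $T\inv(I^c)=I$, $(T\inv J)^c=J$ for $J\in C$, and the compatibility of localisation and contraction with intersections of $k$-ideals, transferring strong irreducibility by localising (resp.\ contracting) the test pair of $k$-ideals. The only difference is that you spell out more explicitly that the two maps are mutually inverse and that $T\inv J$ is proper, details the paper leaves implicit.
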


\begin{proof}
    Suppose $I$ is a  $k$-proper strongly irreducible ideal of $T\inv S$. Then it follows that $I^c$ is a $k$-ideal of $S$ and $I^c \cap T=\varnothing $ since $I$ cannot contain a unit. Let $A \cap B \subseteq I^c$ for $k$-ideals $A$, $B$ of $S$. Then it follows that \[(T\inv A) \cap (T\inv B) =T\inv (A\cap B) \subseteq T\inv I^c =I.\] Therefore, $T\inv A \subseteq I$ or $T\inv B \subseteq I$. Hence $A\subseteq (T\inv A)^c \subseteq I^c$ or $B\subseteq (T\inv B)^c \subseteq I^c$. 
    Conversely, suppose $I$ is a $k$-proper strongly irreducible ideal of $S$ such that $I \cap T =\varnothing $.  Let $A \cap B \subseteq T\inv I$, where $A$ and $B$ are $k$-strongly irreducible ideals of $T\inv R$. Then $A^c \cap B^c =(A \cap B)^c \subseteq (T\inv I)^c$. Note that $I=(T\inv I)^c $. Thus $A^c \cap B^c \subseteq I$, and hence, $A^c \subseteq I$ or $B^c \subseteq I$. We then have $A=T\inv (A^c) \subseteq  T\inv I$ or $B=T\inv(B^c) \subseteq T\inv I.$
\end{proof}

Motivated by Azizi's results \cite[Theorem 3.4]{Azizi08} characterizing the relationships among strongly irreducible, prime, and maximal ideals in localized rings, we extend these ideas to the semiring context by formulating their subtractive counterparts in localized semirings.

\begin{proposition}
 If $S$ is a semiring, then the following are equivalent.
\begin{enumerate}
\item  Every $k$-primary ideal of $S$ is a $k$-strongly irreducible ideal.

\item For any $k$-prime ideal $P$ of $S$, every $k$-primary ideal of $S_P$ is a $k$-strongly irreducible
ideal.

\item For any $k$-maximal ideal $P$ of $S$, every $k$-primary ideal of $S_P$ is a $k$-strongly irreducible
ideal.
\end{enumerate}
\end{proposition}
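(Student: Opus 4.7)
The plan is to prove the cyclic chain $(1)\Rightarrow (2)\Rightarrow (3)\Rightarrow (1)$. The implication $(2)\Rightarrow (3)$ is immediate from Lemma~\ref{loc16}, since every $k$-maximal ideal is $k$-prime. For $(1)\Rightarrow (2)$, fix a $k$-prime ideal $P$ of $S$ and let $J$ be a $k$-primary ideal of $S_P$. Its $k$-contraction $J^c = J\cap S$ is again a $k$-primary $k$-ideal of $S$ (the defining implication for $k$-primariness transfers by composing with the canonical map $S\to S_P$, noting that $J^c$ is automatically a $k$-ideal). Applying $(1)$, $J^c$ is $k$-strongly irreducible; since $J^c\in C$ and $J^c\cap (S\setminus P)=\varnothing$, Theorem~\ref{loc} together with the preceding lemma (giving $T\inv (J^c)=J$, where $T=S\setminus P$) transports $k$-strong irreducibility back to $J$.

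The real work lies in $(3)\Rightarrow (1)$. Let $I$ be a proper $k$-primary ideal of $S$. A Zorn argument in the style of Proposition~\ref{krull's theorem} embeds $I$ into some $k$-maximal ideal $M$, and then $T:=S\setminus M$ is multiplicatively closed (by primality of $M$, which follows from Lemma~\ref{loc16}) and disjoint from $I$. The localised ideal $T\inv I$ is a proper $k$-primary ideal of $S_M$, so by $(3)$ it is $k$-strongly irreducible. Now suppose $A\cap B\subseteq I$ for $k$-ideals $A,B$ of $S$. Since localisation commutes with finite intersections, $T\inv A\cap T\inv B\subseteq T\inv I$, whence, without loss of generality, $T\inv A\subseteq T\inv I$. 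Fix $a\in A$; then $a/1\in T\inv I$ produces $i\in I$, $s\in T$, and a witness $u\in T$ with $usa=ui\in I$. Because $I$ is $k$-primary, either $a\in I$ or $(us)^n\in I\subseteq M$ for some $n\geqslant 1$. The latter forces $us\in M$ by primality of $M$, contradicting $us\in T$. Hence $a\in I$, so $A\subseteq I$, and $I$ is $k$-strongly irreducible.

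The principal obstacle is the descent step $T\inv A\subseteq T\inv I\Longrightarrow A\subseteq I$: in semiring localisation, equality $a/1 = i/s$ only yields $usa=ui$ up to an auxiliary witness $u\in T$, and the absence of additive inverses blocks the cleaner cancellation available in rings. The $k$-primariness of $I$ is precisely the hypothesis that converts $(us)a\in I$ into either $a\in I$ or a contradiction with $us\in T$ via primality of $M$. This is also why $M$ must be chosen above $I$: only then does $(us)^n\in I\subseteq M$ yield $us\in M$ and hence the desired contradiction.
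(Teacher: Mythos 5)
Your proposal is correct and follows essentially the same route as the paper: contract and apply (1) then transport back via Theorem~\ref{loc} for (1)$\Rightarrow$(2), note $k$-maximal implies $k$-prime for (2)$\Rightarrow$(3), and localise at a $k$-maximal ideal for (3)$\Rightarrow$(1). The only difference is cosmetic: where the paper invokes Corollary~\ref{loc4} and the contraction facts of Propositions~\ref{loc6}--\ref{loc7}, you re-prove the descent $T\inv A\subseteq T\inv I\Rightarrow A\subseteq I$ by hand, and you make explicit the choice $I\subseteq M$ that the paper's wording leaves implicit.
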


\begin{proof}
	(1)$\Rightarrow$(2):
Let $I$ be a $k$-primary ideal of $S_P$. Then $I^c$ is a $k$-ideal. Let $xy\in I^c$. This implies $\frac{xy}{1}\in I$, which further implies $\frac{x}{1}\cdot \frac{y}{1} \in I$. It follows that, $\frac{x}{1} \in I$ or $(\frac{y}{1})^n\in I$. If $\frac{x}{1} \in I$, we have $x \in I^c$, and if $\frac{y^n}{1}=(\frac{y}{1})^n \in I$, we get $y^n\in I^c$. Now, $I^c\cap (S\setminus P)=\emptyset$, $I^c\in C$. By our assumption, $I^c$ is a $k$-strongly irreducible ideal of $S_P$. Therefore, by Theorem~\ref{loc}, we obtain $I=T\inv I^c$, where $T=S\setminus P$, implying that $I$ is a $k$-strongly irreducible ideal of $S_P$.

    (2)$\Rightarrow$(3):
Since every $k$-maximal ideal is $k$-prime, the proof is clear.

    (3)$\Rightarrow$(1): 
Suppose, $I$ is a $k$-primary ideal of $S$ and let $M$ be a $k$-maximal ideal of $S$. Then by Corollary~\ref{loc4}, $I_M=T\inv I$, where $T=S\setminus M$, is a $k$-primary ideal of $S_M$. Then by our assumption, $I_M$ is a $k$-strongly irreducible ideal of $S$. As $I$ is a $k$-primary ideal of $S$, we have $(I_M)^c=I$. Thus, $I$ is  $k$-strongly irreducible.
\end{proof}
The next lemma provides a structural blueprint of $k$-ideals under localisation. 

\begin{lemma}\label{loc8}
    Let $T$ be a multiplicative closed subset of a semiring $S$ and $f\colon S\to T\inv S$ be the natural homomorphism defined by  $f(r):=\frac{r}{1}$. If $I$ is a $k$-ideal of $S$, then $I^{ec} = \bigcup\limits_{s\in S}$$(I:s)$. Hence $I^e=\langle1\rangle$ if and only if $I$ meets $T$.
\end{lemma}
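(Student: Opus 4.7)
The plan is to translate between $S$ and $T\inv S$ using the explicit form of the ideal $\langle f(I)\rangle$ inside $T\inv S$ together with the $k$-closure definition of $I^e = \mathcal{C}_k(\langle f(I)\rangle)$. As a preliminary, I would first record that every element of $\langle f(I)\rangle$ can be written as $\frac{a}{t}$ with $a\in I$ and $t\in T$: a general element is a finite sum $\sum_j \frac{r_j}{s_j}\cdot\frac{a_j}{1}$ with $a_j\in I$, and after clearing denominators by $t=\prod_i s_i$, the numerator becomes a sum of terms $r_j a_j\prod_{i\ne j}s_i$, each of which lies in $I$ because $a_j\in I$ and $I$ is an ideal.

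Next I would prove both inclusions. For $\bigcup_{t\in T}(I:t)\subseteq I^{ec}$, if $tx\in I$ for some $t\in T$, then $\frac{tx}{1}\in\langle f(I)\rangle$, and hence $\frac{x}{1}=\frac{1}{t}\cdot\frac{tx}{1}\in\langle f(I)\rangle\subseteq I^e$, so $x\in I^{ec}$. For the reverse inclusion, let $x\in I^{ec}$, so $\frac{x}{1}\in I^e=\mathcal{C}_k(\langle f(I)\rangle)$; by the definition of $k$-closure there exists $y\in\langle f(I)\rangle$ with $\frac{x}{1}+y\in\langle f(I)\rangle$. Writing $y=\frac{b}{s}$ and $\frac{x}{1}+y=\frac{c}{t}$ with $b,c\in I$ and $s,t\in T$, the localization equality $\frac{sx+b}{s}=\frac{c}{t}$ produces $u\in T$ with $ut(sx+b)=usc$, i.e.\ $(uts)\,x + utb = usc$. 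Since $utb,\,usc\in I$ and $I$ is a $k$-ideal, the $k$-property applied to this additive equation yields $(uts)\,x\in I$; thus $x\in(I:uts)$ with $uts\in T$.

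For the final equivalence, I would chain the definitions: $I^e=\langle 1\rangle$ iff $1\in I^e$ iff $f(1)=\frac{1}{1}\in I^e$ iff $1\in I^{ec}$. By the formula just established, $1\in I^{ec}$ is equivalent to the existence of $t\in T$ with $t\cdot 1=t\in I$, that is, $I\cap T\ne\varnothing$.

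The principal subtlety, absent from the ring-theoretic analogue, is that the semiring localization relation $\frac{a}{s}=\frac{b}{t}$ produces a purely \emph{additive} identity rather than one involving subtraction, so one cannot simply move terms across an equality. The crux of the argument is therefore the moment where the equation $(uts)x + utb = usc$ has both $utb$ and $usc$ in $I$ and the $k$-ideal hypothesis is invoked to extract $(uts)x\in I$; this is exactly the place where the subtractive property of $I$ is indispensable. (A minor point: the displayed union should be read over $s\in T$, since for $s\notin T$ the colon $(I:s)$ need not lie in $I^{ec}$, as elementary examples in $\mathds{N}$ show.)
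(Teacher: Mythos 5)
Your argument is correct and is essentially the paper's proof: both identify $\langle f(I)\rangle$ with the fractions $\frac{a}{t}$, $a\in I$, $t\in T$, and unwind the contraction of its $k$-closure to obtain $I^{ec}=\bigcup_{s\in T}(I:s)$, from which the unit criterion follows by testing $1\in I^{ec}$. If anything, yours is tighter on two points the paper glosses over: you retain the extra factor $u\in T$ arising from the localization relation (the paper writes $\frac{x}{1}=\frac{i}{s}\iff xs=i$) and apply the subtractivity of $I$ in $S$ directly to the resulting additive equation, rather than relying on the later fact (Lemma~\ref{loc5}) that $T\inv I$ is a $k$-ideal of $T\inv S$; and you correctly read the union over $s\in T$ rather than $s\in S$ as misprinted in the statement.
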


\begin{proof}
    If $I$ is a $k$-ideal of $S$, then  \[I^e=\mathcal{C}_k(\langle f(I) \rangle)=\mathcal{C}_k(T\inv I)=T\inv I.\] Thus \[x\in I^{ec}=(T\inv I)^{c}\iff\frac{x}{1}\in T\inv I\iff\frac{x}{1}=\frac{i}{s}\iff xs=i\in I\iff x\in \bigcup_{s\in T}(I:s),\]  for some $i\in I$, $s\in T$.
Next, let  $I^{e}=\langle1 \rangle = T\inv S$. This implies $1\in I^{e}$. Then $1=\frac{1}{1} \in I^{ec} = \bigcup_{s\in T}$$(I:s)$. Therefore $1\cdot s'\in I$, for some $s'\in T$. So, $I\cap T\neq \emptyset$. Thus $I$ meets $T$. Conversely, suppose $x\in I\cap T$.Then $\frac{x}{1} \in T\inv I=I^{e}$. So, $1=(\frac{1}{x})\cdot(\frac{x}{1})\in I^{e}$. Therefore we get $I^{e}=\langle 1\rangle$.
\end{proof}

\begin{proposition}\label{contraction of k-prime ideals}
Let $\varphi\colon S\to S'$ be a semiring homomorphism and $\varphi_{!} \colon \mathrm{Spec}_k(S') \to \mathrm{Spec}_k(S)$ be the associated map of $\varphi$ defined by $\varphi_!(Q):=Q^c$. Let $P$ and $Q$ respectively be $k$-prime ideals of $S$ and $S'$. Then 
\begin{enumerate}
\item $P$ is the $k$-contraction of a $k$-prime ideal of $S'$ if and only if $P^{ec}=P$.
		
\item   $P$ is a $k$-contracted ideal if and only if  $\varphi_!$ is surjective.
		
\item $Q$ is a $k$-extended ideal implies that $\varphi_!$ is injective.
\end{enumerate}
\end{proposition}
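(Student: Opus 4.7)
The plan is to treat the three parts in sequence, relying throughout on the elementary universal inclusions $P \subseteq P^{ec}$ (for any $k$-ideal $P$ of $S$) and $Q^{ce} \subseteq Q$ (for any $k$-ideal $Q$ of $S'$), together with the Krull-type existence result established in Proposition~\ref{krull's theorem}. Both universal inclusions follow from unwinding the definition $(\cdot)^e = \mathcal{C}_k(\langle\varphi(\cdot)\rangle)$ and using that $\mathcal{C}_k$ fixes $k$-ideals and is order-preserving.

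For part~(1), the forward direction is routine: if $P = Q^c$ with $Q \in \mathrm{Spec}_k(S')$, then $\varphi(P) = \varphi(\varphi\inv(Q)) \subseteq Q$, so $P^e = \mathcal{C}_k(\langle\varphi(P)\rangle) \subseteq \mathcal{C}_k(Q) = Q$, and contracting gives $P^{ec} \subseteq Q^c = P$; combined with the universal inclusion this yields $P^{ec} = P$. The reverse direction is the main technical step. Assuming $P^{ec} = P$, I would set $T := \varphi(S \setminus P)$, which is multiplicatively closed in $S'$ because $S \setminus P$ is multiplicatively closed by the primality of $P$. The first key check is that $P^e \cap T = \emptyset$: any $\varphi(s) \in P^e \cap T$ with $s \notin P$ would force $s \in \varphi\inv(P^e) = P^{ec} = P$, a contradiction. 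A Zorn-lemma argument then produces a $k$-ideal $Q$ of $S'$ containing $P^e$ and maximal among $k$-ideals disjoint from $T$; the argument of Proposition~\ref{krull's theorem} carries over verbatim (with $P^e$ as an extra floor, since $\mathcal{C}_k(Q+\langle a\rangle)$ still contains $P^e$) to confirm $Q$ is $k$-prime. Finally, $P = P^{ec} \subseteq Q^c$ since $Q \supseteq P^e$, while $Q \cap T = \emptyset$ forces $\varphi\inv(Q) \cap (S \setminus P) = \emptyset$, i.e.\ $Q^c \subseteq P$, so $P = Q^c$ as required.

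For parts~(2) and~(3) very little extra work is required. Part~(2) is a direct unwinding of the definition of $\varphi_!$: its image is by construction the collection of $k$-prime ideals of $S$ of the form $Q^c$ for some $Q \in \mathrm{Spec}_k(S')$, so surjectivity of $\varphi_!$ is equivalent to every $k$-prime of $S$ being $k$-contracted from a $k$-prime of $S'$. For part~(3), assuming every $Q \in \mathrm{Spec}_k(S')$ is $k$-extended, I would first observe that $Q = I^e$ for some $k$-ideal $I$ of $S$ forces $Q^{ce} = Q$: applying the order-preserving map $(\cdot)^e$ to the universal inclusion $I \subseteq I^{ec}$ gives $Q = I^e \subseteq I^{ece} = Q^{ce}$, while the universal inclusion $Q^{ce} \subseteq Q$ supplies the reverse containment. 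Then $\varphi_!(Q_1) = \varphi_!(Q_2)$, i.e.\ $Q_1^c = Q_2^c$, yields $Q_1 = Q_1^{ce} = Q_2^{ce} = Q_2$, proving injectivity.

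I expect the reverse direction of~(1) to be the only genuine obstacle: one must confirm that the $k$-closure interacts correctly with contraction (so that $\varphi(s) \in P^e$ really does force $s \in P^{ec}$ without any additive-inverse manipulation) and that the Zorn/Krull construction can be carried out inside the restricted class of $k$-ideals containing $P^e$. Once Proposition~\ref{krull's theorem} is invoked, the remainder reduces to careful bookkeeping with the extension-contraction pair.
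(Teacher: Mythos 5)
Your argument is correct, and for the substantive converse in part (1) it takes a genuinely different route from the paper. After the common step of checking $P^e\cap T=\emptyset$ for $T=\varphi(S\setminus P)$, the paper passes to the localisation $T\inv S'$: by Lemma~\ref{loc8} the extension of $P^e$ there is a proper $k$-ideal, it is placed inside a $k$-maximal (hence, by Lemma~\ref{loc16}, $k$-prime) ideal $M$ of $T\inv S'$, and $Q:=M^c$ is contracted back to $S'$, where one checks $Q\cap T=\emptyset$ and $Q^c=P$. You instead stay inside $S'$ and run the Zorn/Krull argument directly: take $Q\supseteq P^e$ maximal among $k$-ideals disjoint from $T$, observing that the proof of Proposition~\ref{krull's theorem} survives the restriction to the family of $k$-ideals containing $P^e$, since $\mathcal{C}_k(Q+\langle a\rangle)$ still contains $P^e$ and properly contains $Q$, hence must meet $T$. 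Your route is the more economical one (no localisation machinery, no detour through $k$-maximal ideals of $T\inv S'$), at the modest cost of re-justifying the Krull argument inside the restricted family, which you do. Your forward direction of (1) is a cleaner version of the paper's computation, and your proof of (3) is in substance identical to the paper's ($Q$ extended forces $Q^{ce}=Q$, then contract).

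The one genuine gap is in (2). In the paper, ``$P$ is a $k$-contracted ideal'' means $P=I^c$ for some $k$-ideal $I$ of $S'$ that need not be $k$-prime, so the forward implication is not the definitional unwinding you describe: you must upgrade an arbitrary $k$-ideal preimage to a $k$-prime one. The paper does this by computing $P^{ec}=I^{cec}=I^c=P$ (from the universal inclusions $I^c\subseteq I^{cec}$ and $I^{ce}\subseteq I$) and then invoking part (1) to obtain a $k$-prime ideal $J$ of $S'$ with $J^c=P$, which is exactly surjectivity of $\varphi_!$ at $P$; the converse direction is the trivial one. You already have every ingredient needed for this (both universal inclusions and your part (1)), so the repair is one line, but as written your (2) conflates ``contraction of some $k$-ideal'' with ``contraction of some $k$-prime ideal'' and thereby skips the only nontrivial step of that part.
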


\begin{proof}
(1) Let $P=Q^{c}$, where $Q$ is a $k$-prime ideal of $S'$. Then $P=Q^{c}=\varphi\inv(Q)$. Therefore $\varphi(P)=Q$. Obviously $P\subseteq P^{ec}$. For the reverse inclusion, let $x\in P^{ec}$. This implies $x\in \varphi\inv(P^e)$, which further implies $\varphi(x)\in P^{e}=\mathcal{C}_k(\langle \varphi(P)\rangle)$. Then $\varphi(x)+r\in \langle\varphi(P)\rangle$, for some $r\in\langle\varphi(P)\rangle=\langle Q \rangle$. Therefore $r=\sum\limits_{i=1}^n r_iq_i$, where $r_i\in S'$ and $q_i\in Q$, for all $i=1,2,\ldots,n$. Since $q_i\in Q$, $r_i\in S'$, we have $r=\sum\limits_{i=1}^n r_iq_i\in Q$. So, $\varphi(x)+r\in Q$. This implies $\varphi(x)\in Q$, as $Q$ is a $k$-ideal of $S'$. Therefore $x\in \varphi\inv(Q)=Q^c=P$. Thus $P^{ec}=P$.
Conversely, suppose $P^{ec}=P$. Let $T$ be the image of $S\setminus P$ in $S'$. Now \[S\setminus P=S\setminus P^{ec}=S\setminus \varphi\inv(P^e)=\varphi\inv(S'\setminus P^e).\] Therefore, $P^e$ does not meet $T$. Since $P$ is a $k$-ideal of $S$, $P^e$ is a $k$-ideal of $S'$. Then by Lemma~\ref{loc8}, $(P^e)^e$ is a proper $k$-ideal of $T\inv S'$. So, it is contained in a $k$-maximal ideal $M$ of $T\inv S'$, \textit{i.e.}, $(P^e)^e$ is contained in a $k$-prime ideal $M$ of $T\inv S'$. Let $Q$ be the contraction of $M$ in $S'$. Then $Q=M^c$. Therefore, $Q$ is $k$-prime, as $M$ is $k$-prime. Now, \[P^e\subseteq (P^e)^{ec}\subseteq M^c=Q.\] Let $r\in Q\cap T$, if possible. Then $r\in Q=M^c$, which implies $\frac{r}{1}\in M$. This further implies $1=(\frac{1}{r})\cdot(\frac{r}{1})\in M$, which is impossible. Therefore $Q\cap T=\emptyset$. Since $P^e\subseteq Q$, we have $P^{ec}\subseteq Q^c$, which gives $P\subseteq Q^c$. If there exists some $x\in Q^c$ such that $x\notin P$, then $\varphi(x)\in Q$ and $\varphi(x)\in \varphi(S\setminus P)=T$. This contradicts the fact that $Q\cap T=\emptyset$. Thus, $P=Q^c$.

 (2) Let $P\in \mathrm{Spec}_k(S)$. Suppose $P$ is a $k$-contracted ideal. Therefore $P=I^c$, for some $k$-ideal $I$ of $S'$. Now \[P^{ec}=I^{cec}=I^c=P.\] This implies $P$ is the $k$-contraction of a $k$-prime ideal, say $J$. Then $J\in \mathrm{Spec}_k(S')$ and $J^c=P$. Thus, $\varphi_!(J)=J^c=P$, that is,  $\varphi_!$ is surjective.
Conversely, suppose that $\varphi_!$ is surjective. Let $P$ be a $k$-prime ideal of $S$. Then there exists $Q\in \mathrm{Spec}_k(S')$ such that $\varphi_!(Q)=P$, which implies $Q^c=P$. Hence, $P$ is a $k$-contracted ideal.

 (3) Let every $k$-prime ideal of $S'$ is a $k$-extended ideal. If possible, let there exist $Q_1$, $Q_2\in \mathrm{Spec}_k(S')$ such that $\varphi_!(Q_1)=\varphi_!(Q_2)$. This implies $Q_1^c=Q_2^c$. Now, $Q_1=J_1^e$ and $Q_2=J_2^e$, for some $k$-ideals $J_1$, $J_2$ of $S$. $Q_1^c=Q_2^c$ implies $Q_1^{ce}=Q_2^{ce}$, which further implies $J_1^{ece}=J_2^{ece}$. So, $J_1^e=J_2^e$. Therefore $Q_1=Q_2$. Thus $\varphi_!$ is injective.	
\end{proof}

\begin{lemma}\label{loc5}
    Let $S$ be a semiring and $T$ be a multiplicatively closed subset of $S$. Then 
    \begin{enumerate}
        \item $I^c$ is a $k$-ideal of $S$,
        \item  $T\inv I$ is a $k$-ideal of $T\inv S$.
    \end{enumerate}
    
\end{lemma}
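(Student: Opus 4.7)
The plan is to handle both parts uniformly by thinking of the natural semiring homomorphism $f\colon S\to T\inv S$, $f(r)=r/1$, so that $I^{c}=f\inv(I)$ in part (1) and $T\inv I$ serves as (essentially) the $k$-extension $I^{e}$ in part (2). The first claim then reduces to the general principle that preimages of $k$-ideals under semiring homomorphisms are $k$-ideals, while the second requires an explicit unfolding of the equality relation on $T\inv S$.

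For (1), assuming that $I$ is a $k$-ideal of $T\inv S$, I would first check that $I^c=\{x\in S\mid x/1\in I\}$ is an ideal of $S$: closure under addition and under $S$-multiplication follows from the facts that $f$ is additive and $S$-linear into $T\inv S$ and that $I$ is an ideal there. For the $k$-property, suppose $x+y\in I^c$ and $y\in I^c$; then $f(x)+f(y)=f(x+y)\in I$ and $f(y)\in I$, so that $f(x)\in I$ since $I$ is a $k$-ideal of $T\inv S$. Hence $x\in I^c$, as required.

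For (2), taking $I$ to be a $k$-ideal of $S$, I would first verify that $T\inv I=\{i/t\mid i\in I,\ t\in T\}$ is an ideal of $T\inv S$ via the usual fraction manipulations. For the $k$-property, let $\alpha,\beta\in T\inv S$ with $\beta\in T\inv I$ and $\alpha+\beta\in T\inv I$. Writing $\alpha=a/s$ and $\beta=i/t$, one has
\[
\frac{at+is}{st}=\alpha+\beta=\frac{j}{u}
\]
for some $j\in I$ and $u\in T$. The definition of equality in the localisation produces a cancellator $v\in T$ with $vu(at+is)=vstj$ in $S$. Since $vuis=(vus)i\in I$ and $vstj=(vst)j\in I$, the element on the left lies in $I$, and the $k$-property of $I$ in $S$ applied to $vuat+vuis\in I$ with $vuis\in I$ forces $vuat\in I$. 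Finally, $\alpha=a/s=(vuat)/(vust)\in T\inv I$.

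The main subtlety lies in part (2): because semirings lack additive inverses, equality of fractions in $T\inv S$ is witnessed by an extra multiplier $v\in T$ rather than by a direct equation, and one must make sure to carry this multiplier all the way through so that the expressions landing in $I$ are genuine $S$-level sums on which the $k$-property of $I$ can be invoked. Once this multiplier is correctly extracted, both parts become routine.
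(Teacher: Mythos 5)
Your proof is correct and follows essentially the same route as the paper: part (1) is the routine preimage/verification argument, and part (2) clears denominators and applies the $k$-property of $I$ at the level of $S$ before repackaging the result as a fraction in $T\inv I$. If anything, your explicit tracking of the cancellator $v\in T$ witnessing equality of fractions in the localisation is more careful than the paper's own computation, which cross-multiplies directly.
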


\begin{proof}
    (1) Let $x$, $x+y \in I^c$. Then $\frac{x}{1},\ \frac{x+y}{1} \in I$. This implies $\frac{x}{1}+\frac{y}{1}\in I$, which further implies $\frac{y}{1} \in I$ as $I$ is a $k$-ideal. Therefore $y\in I^c$.
    
    (2) Let $\frac{x}{s_1}$, $\frac{x}{s_1}+\frac{y}{s_2} \in T\inv I$. Then $\frac{xs_2+ys_1}{s_1s_2}=\frac{i}{s}$, for some $i\in I$ and $s\in T$. This implies $xs_2s+ys_1s=is_1s_2\in I$. Now, $\frac{x}{s_1}=\frac{i'}{s'}$, for some $i'\in I$ and $s'\in T$, which implies $xs'\in I$, and so, $xs_2ss'\in I$. Therefore, $xs_2ss'+ys_1ss'\in I$ implying $ys_1ss'\in I$, as $I$ is a $k$-ideal. Then $\frac{y}{s_2}=\frac{ys_1ss'}{s_2s_1ss'}\in T\inv I$, \textit{i.e.},  $\frac{y}{s_2} \in T\inv I$. Hence, $T\inv I$ is a $k$-ideal of $T\inv S$.
\end{proof}

As an extension to the results established by Atani in \cite[Lemma 9]{Atani08} in the study of localisation of semirings, we examine the behaviour of specific classes of subtractive ideals under the operation of taking $k$-radicals. 

\begin{proposition}\label{loc6}
Let $I$ be a $k$-primary ideal of a semiring $S$ with $\mathcal{R}_k(I) = P$. Then the following hold:
\begin{enumerate}
	
\item $IS_T$ is a $k$-primary ideal of $S_T$.

\item If $P \cap T = \emptyset$, then $IS_T \cap S = I$.

\item  If $P \cap T = \emptyset$, then $(IS_T :_{S_T} PS_T)=(I :_S P)S_T$.

\item If $P \cap T = \emptyset$, and $J$ is a $k$-ideal of $S$ such that $JS_T\subseteq IS_T$, then $J \subseteq  I.$
\end{enumerate}
\end{proposition}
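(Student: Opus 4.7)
The plan is to address the four parts in turn, noting that (1), (2), and (4) will follow from routine manipulations of the localisation equivalence together with Lemma~\ref{loc5}(2), while the substance of the argument will concentrate in (3).

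For (1), Lemma~\ref{loc5}(2) already gives that $IS_T$ is a $k$-ideal, so only the primary condition needs checking. Starting from $(x/s_1)(y/s_2) \in IS_T$ with $x/s_1 \notin IS_T$, I would unravel the defining equivalence in $S_T$ to obtain $u, s \in T$ and $i \in I$ with $usxy = us_1 s_2 i \in I$, i.e., $(usx) y \in I$. The hypothesis $x/s_1 \notin IS_T$ is equivalent to $vx \notin I$ for every $v \in T$, so in particular $usx \notin I$; the $k$-primary property of $I$ then forces $y^n \in I$ for some $n$, whence $(y/s_2)^n \in IS_T$. For (2), only the inclusion $IS_T \cap S \subseteq I$ needs attention: given $x \in IS_T \cap S$, I would pick $v \in T$ with $vx \in I$, and if $x \notin I$, $k$-primariness yields $v^n \in I$, placing $v \in \mathcal{R}_k(I) = P$, which contradicts $v \in T$ under $P \cap T = \emptyset$. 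Part (4) is then immediate from (2): for $j \in J$, $j/1 \in JS_T \subseteq IS_T$, hence $j \in IS_T \cap S = I$.

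The heart of the argument is (3). The inclusion $(I :_S P) S_T \subseteq (IS_T :_{S_T} PS_T)$ is direct from $sp \in I$ whenever $s \in (I :_S P)$ and $p \in P$. For the reverse inclusion, given $s/t \in (IS_T :_{S_T} PS_T)$ and any $p \in P$, the relation $(s/t)(p/1) \in IS_T$ yields some $w_p \in T$ with $w_p s p \in I$. I expect the main obstacle to be obtaining uniform denominators across all $p \in P$ — the obvious maneuver would require $P$ to be finitely generated, which is not in hypothesis. The way past this is to read $w_p (sp) \in I$ with $sp$ in the role of ``$a$'' and $w_p$ in the role of ``$b$'' in the $k$-primary condition: if $sp \notin I$, then $w_p^n \in I$ for some $n$, so $w_p \in \mathcal{R}_k(I) = P$, contradicting $w_p \in T$. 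Hence $sp \in I$ for every $p \in P$, so $s \in (I :_S P)$ and $s/t \in (I :_S P) S_T$, completing the proof.
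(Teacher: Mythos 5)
Your proposal is correct and follows essentially the same route as the paper's proof: unwind the localisation equivalence to get a relation $wa\in I$ with $w\in T$, then invoke the $k$-primary property together with $\mathcal{R}_k(I)=P$ and $P\cap T=\emptyset$ to rule out the ``$w^n\in I$'' alternative, in each of (1)--(4) (including the key pointwise argument for the reverse inclusion in (3), which is exactly how the paper avoids any finite-generation issue). The only cosmetic difference is that you deduce (4) directly from (2), whereas the paper repeats the short argument.
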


\begin{proof}
(1) If $I$ is a $k$-primary ideal of $S$, then $IS_T$ is a $k$-ideal of $S$, by Lemma~\ref{loc5}. Let $(\frac{x}{s_1})\cdot(\frac{y}{s_2})\in IS_T$, where $\frac{x}{s_1}$, $\frac{y}{s_2}\in S_T$. Then $\frac{xy}{s_1s_2}=\frac{i}{s}$, for some $i\in I$, $s\in T$. So, $xys\in I$. Since $I$ is a $k$-primary ideal, either $xs\in I$ or $y^n\in I$, for some natural number $n$. If $xs\in I$, then $\frac{x}{s_1}=\frac{xs}{ss_1}\in IS_T$. If $y^n\in I$, then $(\frac{y}{s_2})^n=\frac{y^n}{s_2^n}\in IS_T$.
    
(2) Clearly, we have $I\subseteq IS_T\cap S$. For the reverse inclusion, let $x\in IS_T\cap S$. Then $\frac{x}{1}\in IS_T$. Therefore, there exists some $i\in I$ and $s\in T$ such that $\frac{x}{1}=\frac{i}{s}$. This implies $xs=i\in I$. Since $I$ is $k$-primary, either $x\in I$ or $s^n\in I$ for some natural number $n$. Now, $s^n\in I$ implies $s\in \mathcal{R}_k(I)=P$, which is a contradiction. So, $x\in I$.
    
(3) Let $\frac{x}{s}\in (I:_SP)$. Then $xP\subseteq I$. Therefore, for all $\frac{p}{s_1}\in PS_T$, we have $\frac{xp}{s_1s}\in IS_T$. This implies that $(\frac{x}{s}).(\frac{p}{s_1})\in IS_T$, which further implies, $\frac{x}{s}P_{S_T}\subseteq IS_T$. So, $\frac{x}{s}\in (IS_T:_{S_T} PS_T)$. We have to show $xy\in I$, for all $y\in P$. Now, for all $y\in P$, clearly $(\frac{x}{s})\cdot(\frac{y}{1})=\frac{xy}{s} \in IS_T$. Then, there exist $a\in I$, $b\in T$ such that $\frac{xy}{s}=\frac{a}{b}$. Therefore $(xy)b\in I$. As $I$ is a $k$-primary ideal of $S$, either $xy\in I$ or $b^n\in I$, for some natural number $n$. If $b^n\in I$, we have $b\in \mathcal{R}_k(I)=P$, a contradiction. Thus $xy\in I$. Hence the proof.
    
(4) Let $x\in J$. Since $\frac{x}{1}\in JS_T\subseteq IS_T$, there exist $i\in I$ and $s\in T$ such that $\frac{x}{1}=\frac{i}{s}$, which implies $xs\in I$. Therefore, $I$ is a $k$-primary ideal implies $x\in I$ or $s^n\in I$. If $s^n\in I$, then we have $s\in \mathcal{R}_k(I)=P$, a contradiction to the fact that $P\cap T=\emptyset$. Therefore, $x\in I$. Hence $J\subseteq I$.
\end{proof}

\begin{proposition}\label{loc7}
Let $I$ be a $k$-ideal of a semiring $S$. If $IS_T$ is a $k$-strongly irreducible
ideal of $S_T$, then $IS_T\cap S$ is a $k$-strongly irreducible ideal of $S$.
\end{proposition}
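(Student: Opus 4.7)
The plan is to transplant $k$-strong irreducibility from the localisation back to $S$ by a contraction-extension argument, much in the spirit of the forward direction of Theorem~\ref{loc}. First, I would observe that $IS_T \cap S$ is a $k$-ideal of $S$ by Lemma~\ref{loc5}(1) (taking the $k$-ideal $IS_T$ of $S_T$), so only the strong irreducibility needs to be checked.

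Next, I would take two $k$-ideals $A$ and $B$ of $S$ with $A \cap B \subseteq IS_T \cap S$, and extend: by Lemma~\ref{loc5}(2), $AS_T$ and $BS_T$ are $k$-ideals of $S_T$, and the standard identity $(A \cap B)S_T = AS_T \cap BS_T$ for fractional ideals gives
\[
AS_T \cap BS_T = (A \cap B)S_T \subseteq (IS_T \cap S)S_T \subseteq IS_T,
\]
where the last inclusion is the easy observation that any $j \in IS_T \cap S$ satisfies $j/s = (1/s)(j/1) \in IS_T$ for every $s \in T$. Since $IS_T$ is $k$-strongly irreducible in $S_T$, this forces $AS_T \subseteq IS_T$ or $BS_T \subseteq IS_T$.

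Finally, I would contract: if $AS_T \subseteq IS_T$, then for any $a \in A$ we have $a/1 \in IS_T$, so $a \in IS_T \cap S$, hence $A \subseteq IS_T \cap S$, and symmetrically for $B$. This yields the desired $k$-strong irreducibility.

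The argument is essentially routine once the two supporting facts (the extension of $k$-ideals is a $k$-ideal, and $T^{-1}$ commutes with binary intersections) are in place; the only mild subtlety worth flagging is the equality $(A \cap B)S_T = AS_T \cap BS_T$, which in the semiring setting rests on the localisation equivalence relation and has already been used implicitly in the proof of Theorem~\ref{loc}. No arithmetic or Noetherian hypothesis is needed, so the proof proceeds in full generality.
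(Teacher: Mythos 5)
Your argument is correct and coincides with the paper's proof: both extend the inclusion $A\cap B\subseteq IS_T\cap S$ to $AS_T\cap BS_T=(A\cap B)S_T\subseteq (IS_T\cap S)S_T\subseteq IS_T$, invoke $k$-strong irreducibility of $IS_T$, and contract back to $S$. The only cosmetic difference is that the paper cites Atani's lemma for $(IS_T\cap S)S_T=IS_T$, while you verify the needed inclusion directly, which is equally fine.
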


\begin{proof}
Let $IS_T$ be a $k$-strongly irreducible ideal of $S_T$. Suppose $A\cap B\subseteq IS_T\cap S$, where $A$, $B$ are two $k$-ideals of $S$. Then \[(A\cap B)S_T\subseteq (IS_T\cap S)S_T=IS_T,\] by \cite[Lemma 4]{Atani08}. This implies $AS_T\cap BS_T\subseteq IS_T$. So, $AS_T\subseteq IS_T$ or $BS_T\subseteq IS_T$, as $IS_T$ is $k$-strongly irreducible. Therefore, $AS_T\cap S\subseteq IS_T\cap S$ or $BS_T\cap S\subseteq IS_T\cap S$, which further implies, $A\subseteq AS_T\cap S\subseteq IS_T\cap S$ or $B\subseteq BS_T\cap S\subseteq IS_T\cap S$,  \textit{i.e.}, $A\subseteq IS_T\cap S$ or $B\subseteq IS_T\cap S$. 	
\end{proof}

\begin{theorem}
Let $I$ be a $k$-strongly irreducible primary ideal of a semiring $S$ such that
$\mathcal{R}_k(I) \cap T = \emptyset$. Then $IS_T$ is $k$-strongly irreducible.
\end{theorem}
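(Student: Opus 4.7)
The plan is to establish the $k$-strong irreducibility of $IS_T$ by contracting any pair of witness ideals down to $S$, invoking the $k$-strong irreducibility of $I$ there, and then re-extending. Lemma~\ref{loc5}(2) already gives that $IS_T$ is a $k$-ideal of $S_T$, so only the defining containment condition needs verification. Suppose that $A$ and $B$ are $k$-ideals of $S_T$ with $A\cap B\subseteq IS_T$. Contracting along the canonical map $f\colon S\to S_T$, $f(s)=s/1$, yields $A^c\cap B^c=(A\cap B)^c\subseteq (IS_T)^c$, and Lemma~\ref{loc5}(1) ensures that $A^c$ and $B^c$ are $k$-ideals of $S$.

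The pivotal step is to identify $(IS_T)^c$ with $I$. This is precisely where both hypotheses on $I$ are used: Proposition~\ref{loc6}(2), applied to the $k$-primary ideal $I$ under the assumption $\mathcal{R}_k(I)\cap T=\emptyset$, gives $IS_T\cap S=I$, hence $(IS_T)^c=I$. Combining yields $A^c\cap B^c\subseteq I$, and the assumed $k$-strong irreducibility of $I$ forces $A^c\subseteq I$ or $B^c\subseteq I$.

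Finally I would re-extend. The containment $A^c\subseteq I$ gives $T\inv(A^c)\subseteq T\inv I=IS_T$, and the standard fact $A=T\inv(A^c)$ for any ideal $A$ of $S_T$, which I would verify in one line (if $a/s\in A$ then $a/1=(s/1)(a/s)\in A$, so $a\in A^c$, whence $a/s=(1/s)(a/1)\in T\inv(A^c)$), completes the argument with $A\subseteq IS_T$ or $B\subseteq IS_T$. The only non-formal input is Proposition~\ref{loc6}(2); the rest is routine contraction-extension bookkeeping, so I do not anticipate any real obstacle here.
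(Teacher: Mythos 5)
Your proposal is correct and follows essentially the same route as the paper: contract the witness ideals $A$, $B$ to $S$ (the paper writes this as $A\cap S$, $B\cap S$ and verifies the $k$-ideal property in-line rather than citing Lemma~\ref{loc5}), identify $IS_T\cap S=I$ via Proposition~\ref{loc6}(2) using the $k$-primary hypothesis and $\mathcal{R}_k(I)\cap T=\emptyset$, apply the $k$-strong irreducibility of $I$, and re-extend via $A=(A\cap S)S_T\subseteq IS_T$. Your explicit one-line verification of $A=T\inv(A^c)$ is a small addition the paper leaves implicit; otherwise the arguments coincide.
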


\begin{proof}
Let $I$ be a $k$-strongly irreducible primary ideal of $S$. Suppose $A$, $B$ be two $k$-ideals of $S_T$ such that $A\cap B \subseteq IS_T$. Then \[(A\cap S)\cap (B\cap S)\subseteq IS_T\cap S=I,\] by Proposition~\ref{loc6}. Let $x$, $x+y\in A\cap S$. Then $\frac{x}{1}$, $\frac{x+y}{1}\in A$. This implies $\frac{x}{1}+\frac{y}{1} \in A$. Therefore $\frac{y}{1}\in A$ as $A$ is a $k$-ideal of $S_T$, which implies $y\in A\cap S$. So, $A\cap S$ and $B\cap S$ are $k$-ideals of $S$. Since $I$ is $k$-strongly irreducible $A\cap S\subseteq I$ or $B\cap S\subseteq I$. Thus $A=(A\cap S)S_T\subseteq IS_T$ or $B=(B\cap S)S_T\subseteq IS_T$.		
\end{proof}

\begin{theorem}
Assume that $I$ is a $k$-primary ideal of a semiring $S$ with $\mathcal{R}_k(I) \cap T = \emptyset$
and let $IS_T$ be $k$-strongly irreducible ideal of $S_T$. Then $I$ is $k$-strongly irreducible.
\end{theorem}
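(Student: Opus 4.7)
The plan is to combine Proposition~\ref{loc7} with Proposition~\ref{loc6}(2), since together they give the result essentially for free. First I would apply Proposition~\ref{loc7} to the hypothesis that $IS_T$ is a $k$-strongly irreducible ideal of $S_T$; this immediately yields that the contraction $IS_T \cap S$ is a $k$-strongly irreducible ideal of $S$. The remaining task is then to identify this contraction with $I$ itself.

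Next I would invoke Proposition~\ref{loc6}(2): because $I$ is $k$-primary with $\mathcal{R}_k(I) \cap T = \emptyset$, the contraction formula gives $IS_T \cap S = I$. Substituting this identity into the conclusion of the first step yields that $I$ is $k$-strongly irreducible in $S$, completing the proof.

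There is effectively no obstacle here. Both ingredients have already been established in this section, and neither the $k$-primary assumption nor the disjointness condition $\mathcal{R}_k(I) \cap T = \emptyset$ needs to be unpacked further, since they enter only through the contraction formula $IS_T \cap S = I$. The only subtlety worth flagging in the write-up is to note explicitly that Proposition~\ref{loc7} requires no hypothesis beyond $I$ being a $k$-ideal (which is implicit in $I$ being $k$-primary), so the application is legitimate and the proof reduces to a one-line chain of equalities and inclusions.
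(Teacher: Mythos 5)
Your proposal is correct and coincides with the paper's own argument: the paper likewise obtains $IS_T\cap S=I$ from Proposition~\ref{loc6}(2) using the $k$-primary hypothesis and $\mathcal{R}_k(I)\cap T=\emptyset$, and then applies Proposition~\ref{loc7} to conclude that $I$ is $k$-strongly irreducible. The only difference is the order in which the two ingredients are invoked, which is immaterial.
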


\begin{proof}
Let $IS_T$ be a $k$-strongly irreducible ideal of $S_T$. Since $I$ is a $k$-primary ideal such that $\mathcal{R}_k(I)\cap T=\emptyset$, by Proposition~\ref{loc6} we have $IS_T\cap S=I$. Now, by Theorem~\ref{loc7}, it follows that $I=IS_T\cap S$ is a $k$-strongly irreducible ideal of $S$.		
\end{proof}









\smallskip

\section{In special semirings}\label{Section 6: k-ideals and its variants in special semirings}
This section is bifurcated in scope. First, we investigate the behaviour of 
$k$-ideals across several distinguished  classes of semirings, including Laskerian semirings, von Neumann regular semirings, unique factorisation semidomains, principal ideal semidomains, and weakly Noetherian semirings. The subsequent subsection is devoted to a focused analysis of 
$k$-ideals in the specialized setting of additively idempotent semirings.

We begin by recalling two important classes of semirings, obtained as natural generalizations of the corresponding ring-theoretic notions.
\begin{definition}
Let $S$ be a semiring. Then
    \begin{enumerate}
        \item $S$ is said to be \emph{Laskerian} if every $k$-ideal has a $k$-primary decomposition.
        \item $S$ is said to be \emph{von Neumann regular} if for all $a\in S$ there exists $x\in S$ such that $a=a^2x$.
    \end{enumerate}
    
\end{definition}
In the results that follow, we analyse the interplay between $k$-strongly irreducible ideals and other distinguished subclasses of $k$-ideals, notably $k$-prime and $k$-primary ideals, across a range of semiring classes.

\begin{proposition}\label{loc3}
Suppose $S$ is a semiring. 
\begin{enumerate}
\item If $I$ is a $k$-strongly irreducible ideal of $S$, then $I$ is a $k$-prime ideal of $S$ if and only if $I=\mathcal{R}_k(I)$.
\item If $S$ is a Laskerian semiring, then every $k$-strongly irreducible ideal is a $k$-primary ideal.

\item If $S$ is a von Neumann regular semiring, then an ideal is $k$-strongly irreducible if and only if
it is a $k$-primary ideal.
\end{enumerate}
\end{proposition}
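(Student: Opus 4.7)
The plan is to dispatch each of the three parts separately, with the first two being short structural arguments and the third reducing, via Theorem \ref{loc14}, to showing $k$-strongly irreducible implies $k$-prime in the von Neumann regular setting.

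For part (1), the forward direction is immediate from the definition $\mathcal{R}_k(I)=\bigcap\{P\mid I\subseteq P,\ P\in\mathrm{Spec}_k(S)\}$: if $I$ is itself $k$-prime, then $I$ appears in the intersection, giving $\mathcal{R}_k(I)\subseteq I$, and combined with the always-valid inclusion $I\subseteq\mathcal{R}_k(I)$, equality follows. For the converse, first observe that $\mathcal{R}_k(I)$, being an intersection of $k$-primes (each of which is $k$-semiprime by taking $B=A$ in the $k$-prime condition), is itself $k$-semiprime; hence $I=\mathcal{R}_k(I)$ inherits $k$-semiprimality. Now suppose $AB\subseteq I$ for $k$-ideals $A,B$. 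Since $(A\cap B)(A\cap B)\subseteq AB\subseteq I$ and $A\cap B$ is a $k$-ideal, $k$-semiprimality yields $A\cap B\subseteq I$, and then the $k$-strongly irreducible hypothesis forces $A\subseteq I$ or $B\subseteq I$, showing $I$ is $k$-prime.

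For part (2), the Laskerian hypothesis supplies a finite $k$-primary decomposition $I=Q_1\cap\cdots\cap Q_n$. The plan is to induct on $n$. For $n=1$ there is nothing to prove. For $n>1$, set $A=Q_1$ and $B=Q_2\cap\cdots\cap Q_n$, both $k$-ideals (the intersection of $k$-ideals being a $k$-ideal). Since $A\cap B=I$, the $k$-strongly irreducible hypothesis gives $A\subseteq I$ or $B\subseteq I$; combined with the trivial reverse inclusions, $I=Q_1$ (so $I$ is $k$-primary) or $I=Q_2\cap\cdots\cap Q_n$, in which case the inductive hypothesis applies.

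For part (3), the $(\Leftarrow)$ direction is immediate: by Theorem \ref{loc14}, $k$-primary coincides with $k$-prime in a von Neumann regular semiring, and every $k$-prime ideal is $k$-strongly irreducible by Remark \ref{Remark: ideal classes are different}(1). For $(\Rightarrow)$, the plan is to prove that $k$-strongly irreducible implies $k$-prime, and then invoke Theorem \ref{loc14} again. The crucial observation is that in a von Neumann regular semiring \emph{every} ideal $J$ is idempotent: given $a\in J$, the regularity relation $a=a^2x$ exhibits $a\in J^2$, so $J\subseteq J^2\subseteq J$. Consequently, for any ideals $A,B$ of $S$, the idempotence of $A\cap B$ yields $A\cap B=(A\cap B)^2\subseteq AB$, and the reverse containment is automatic, giving $A\cap B=AB$. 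In particular $\langle a\rangle\cap\langle b\rangle=\langle a\rangle\langle b\rangle=\langle ab\rangle$. Now given $ab\in I$, we have $\langle a\rangle\cap\langle b\rangle=\langle ab\rangle\subseteq I$; applying the $k$-closure and using that $\mathcal{C}_k$ distributes over intersections together with $\mathcal{C}_k(I)=I$, we obtain
\[
\mathcal{C}_k(\langle a\rangle)\cap\mathcal{C}_k(\langle b\rangle)=\mathcal{C}_k(\langle a\rangle\cap\langle b\rangle)\subseteq I.
\]
The $k$-strongly irreducible hypothesis then forces $\mathcal{C}_k(\langle a\rangle)\subseteq I$ or $\mathcal{C}_k(\langle b\rangle)\subseteq I$, i.e.\ $a\in I$ or $b\in I$, proving $I$ is $k$-prime as desired.

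The main obstacle is the forward direction of part (3): isolating the right structural feature of von Neumann regular semirings that bridges $k$-strong irreducibility and $k$-primeness. The idempotence-of-ideals identity $J=J^2$ is what makes the intersection $\langle a\rangle\cap\langle b\rangle$ collapse to the product $\langle ab\rangle$, and this collapse is exactly the ingredient needed to convert a $k$-strongly irreducible hypothesis (phrased in terms of intersections) into the $k$-prime conclusion (phrased in terms of products). Everything else in parts (1) and (2) is essentially bookkeeping using the properties of $\mathcal{R}_k$ and induction.
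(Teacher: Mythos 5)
Your proposal is correct and follows essentially the same route as the paper: part (1) reduces $AB\subseteq I$ to $A\cap B\subseteq I$ via properties of $\mathcal{R}_k$ (the paper uses $\mathcal{R}_k(AB)=\mathcal{R}_k(A)\cap\mathcal{R}_k(B)$ where you use semiprimality of the radical, which is the same fact in different clothing), part (2) picks off a component of the primary decomposition exactly as the paper does, and part (3) reduces to $k$-primeness via Theorem~\ref{loc14} and then shows $\mathcal{C}_k(\langle a\rangle)\cap\mathcal{C}_k(\langle b\rangle)\subseteq I$ whenever $ab\in I$. The only cosmetic difference is in this last step: you derive $\langle a\rangle\cap\langle b\rangle\subseteq\langle ab\rangle$ from the idempotence of all ideals in a von Neumann regular semiring, whereas the paper performs the equivalent element-level computation $am_1=a^2r_1m_1=ab\,m_2r_1$ directly from the regularity relation.
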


\begin{proof}
	(1) If $I$ is $k$-prime, then clearly $I=\mathcal{R}_k(I)$. Conversely, suppose $I=\mathcal{R}_k(I)$. Let $AB \subseteq I$, where $A$, $B$ are two $k$-ideals of $S$. Then \[A\cap B \subseteq \mathcal{R}_k(A\cap B)=\mathcal{R}_k(A)\cap \mathcal{R}_k(B)=\mathcal{R}_k(AB)\subseteq \mathcal{R}_k(I)=I.\] Therefore, $A \subseteq I$ or $B \subseteq I$, as $I$ is $k$-strongly irreducible. Hence $I$ is $k$-prime.
    
        (2) Let $I$ be a $k$-strongly irreducible ideal of $S$ and suppose $\bigcap\limits_{i=1}^n A_i$ be a $k$-primary decomposition  of $I$. Then $\bigcap\limits_{i=1}^n A_i \subseteq I$. As $I$ is $k$-strongly irreducible, we have $A
        _j \subseteq I$, for some $j\in \{1,2,...,n\}$. This implies $A_j \subseteq I \subseteq A_j$, which further implies $I=A_j$. Thus $I$ is $k$-primary.
        
        (3) In a von Neumann regular semiring, $k$-prime and $k$-primary ideals are the same, by Theorem~\ref{loc14}. Let $I$ be a $k$-strongly irreducible ideal of $S$. To show that $I$ is a $k$-primary ideal, it is enough to show that $I$ is $k$-prime. Let $ab \in I$. Since $S$ is a von Neumann regular semiring, there exist $r_1$, $r_2 \in S$ such that $a=a^2r_1$, $b =b^2r_2$. Suppose, $c\in \mathcal{C}_k(\langle a \rangle) \cap \mathcal{C}_k(\langle b \rangle) = \mathcal{C}_K(\langle a\cap \langle b \rangle)$. Then $c+x_1=x_2$, for some $x_1$, $x_2 \in \langle a \rangle\cap \langle b\rangle$. Now, $x_1=am_1=bm_2$, for $m_1$, $m_2\in S$ and $x_2=an_1=bn_2$, for $n_1$, $n_2 \in S$. Then \[x_1=am_1=a^2r_1m_1=am_1ar_1=bm_2ar_1=abm_2r_1.\] Similarly, $x_2=an_1=a^2r_1n_1=an_1ar_1=bn_2ar_1=abn_2r_1$. Therefore, $c+x_1=x_2$ implies $c+abm_2r_1=abn_2r_1$. So, $c\in \mathcal{C}_k(\langle ab \rangle)\subseteq \mathcal{C}_k(I)=I$. Since $I$ is $k$-strongly irreducible, we must have $a\in I$ or $b\in I$.
\end{proof}

\begin{remark}
    In a bounded distributive lattice, each $k$-ideal is a $k$-radical ideal. Therefore, by Proposition~\ref{loc3}, every $k$-strongly irreducible ideal is $k$-prime. This confirms the result proved in \cite[Theorem 5]{Iseki56}.
\end{remark}

For rings, Azizi examined various properties of strongly irreducible ideals in unique factorisation domains\cite[Theorem 2.2]{Azizi08}. We now present subtractive analogous of these properties, extending the discussion to the present framework. The proof of the following lemma is identical to the corresponding result (dropping the `$k$-part') for rings.

\begin{lemma}\label{loc15}
    Let $S$ is a unique factorisation semidomain and $I$ be a $k$-proper ideal of $S$. Then
    \begin{enumerate}
        \item $I$ is $k$-strongly irreducible if and only if $p_1^{n_1} p_2^{n_2}\cdots p_k^{n_k}\in I$, where $p_i$ are distinct
        prime elements of $S$ and $n_i$ are natural numbers, implies that $p_j^{n_j} \in I$, for some $j$, with $1 \leqslant j \leqslant k.$

    \item If $I$ is a non-zero principal $k$-ideal, then $I$ is $k$-strongly irreducible if and only if the generator of $I$ is a power of a prime element of $S$.
    \end{enumerate}
\end{lemma}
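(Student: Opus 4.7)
The plan is to reduce both parts of the lemma to the equivalent characterization provided by Theorem~\ref{loc11}, namely that a $k$-ideal $I$ is $k$-strongly irreducible if and only if $\mathcal{C}_k(\langle a\rangle)\cap\mathcal{C}_k(\langle b\rangle)\subseteq I$ forces $a\in I$ or $b\in I$. Combined with the standard UFS divisibility identity $\langle a\rangle\cap\langle b\rangle=\langle\mathrm{lcm}(a,b)\rangle$ and the fact (from the $k$-closure lemma in Section~\ref{prim}) that $\mathcal{C}_k$ commutes with finite intersections, this simplifies the characterization to the convenient form: $\mathrm{lcm}(a,b)\in I$ implies $a\in I$ or $b\in I$.

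For the forward direction of (1), I would induct on the number of prime factors appearing in the product $p_1^{n_1}\cdots p_k^{n_k}\in I$. The case $k=1$ is immediate. For the inductive step, set $a:=p_1^{n_1}$ and $b:=p_2^{n_2}\cdots p_k^{n_k}$; since the primes $p_i$ are distinct, $a$ and $b$ are coprime, so $\mathrm{lcm}(a,b)=ab\in I$. The reformulated criterion yields $a\in I$, which is the desired conclusion, or $b\in I$, which feeds the induction hypothesis. For the converse, assume the prime-power condition and take $a,b\in S$ with $\mathrm{lcm}(a,b)\in I$. Factoring $\mathrm{lcm}(a,b)=q_1^{m_1}\cdots q_r^{m_r}$ into distinct primes, the hypothesis produces some $q_j^{m_j}\in I$. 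Since the exponent $m_j$ equals $\max(v_{q_j}(a),v_{q_j}(b))$, the factor $q_j^{m_j}$ divides $a$ or $b$, placing one of them inside $\langle q_j^{m_j}\rangle\subseteq I$; applying Theorem~\ref{loc11} closes the argument.

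For part (2), write $I=\langle s\rangle$ with factorisation $s=p_1^{n_1}\cdots p_k^{n_k}$. Assuming $I$ is $k$-strongly irreducible, part (1) supplies some $p_j^{n_j}\in\langle s\rangle$, so that $s\mid p_j^{n_j}$; unique factorisation then forces all other prime factors of $s$ to be absent, so $s$ must be a prime power. Conversely, if $s=p^n$ and $q_1^{m_1}\cdots q_r^{m_r}\in\langle p^n\rangle$, divisibility gives $p=q_j$ with $n\leqslant m_j$, hence $q_j^{m_j}\in\langle p^n\rangle=I$; invoking (1) again yields $k$-strong irreducibility.

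The main obstacle to turning this sketch into a rigorous proof is confirming that the classical UFD identity $\langle a\rangle\cap\langle b\rangle=\langle\mathrm{lcm}(a,b)\rangle$, together with its interaction with $\mathcal{C}_k$, transfers cleanly to a unique factorisation semidomain, where subtraction is unavailable but divisibility and prime valuations behave classically. Once this identification is in place, the remainder of the argument is routine bookkeeping with prime valuations, which is precisely why the authors remark that the proof mirrors the ring-theoretic one after dropping the $k$-prefix.
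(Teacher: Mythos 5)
Your proposal is correct, and it is essentially the argument the paper has in mind: the paper omits the proof, deferring to Azizi's ring-theoretic result, and your subtractive adaptation via Theorem~\ref{loc11}, the identity $\langle a\rangle\cap\langle b\rangle=\langle\mathrm{l.c.m.}\{a,b\}\rangle$ in a UFSD, and the fact that $\mathcal{C}_k$ commutes with intersections is exactly the same machinery the paper itself deploys immediately afterwards in the proof of Theorem~\ref{loc2}. The only point you flag as an obstacle --- transferring the l.c.m.\ identity to semidomains --- is indeed routine, since divisibility in a UFSD is governed by prime valuations just as in a UFD.
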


\begin{theorem}\label{loc2}
Let $S$ be a unique factorisation semidomain and let $I$ be a $k$-proper ideal of $S$.
\begin{enumerate}
\item $I$ is $k$-strongly irreducible if and only if for each $x$, $y \in S,$ $\mathrm{l.c.m.}\{x,y\} \in I$ implies that
$x \in I$ or $y \in I.$

\item Every $k$-strongly irreducible ideal is a $k$-primary ideal.
\end{enumerate}
\end{theorem}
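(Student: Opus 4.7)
The plan is to exploit Lemma \ref{loc15} as the main characterization of $k$-strongly irreducible ideals in a unique factorisation semidomain, which reduces everything to reasoning about prime-power decompositions and divisibility. The whole argument bypasses $\mathcal{C}_k$-style manipulations and operates directly on factorisations.

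For the forward direction of (1), I would write $x=u\prod_{i=1}^{n} p_i^{a_i}$ and $y=v\prod_{i=1}^{n} p_i^{b_i}$ with units $u,v$, distinct primes $p_i$, and exponents $a_i,b_i\geqslant 0$ (using a common list of primes so the indices match). Then $\mathrm{l.c.m.}\{x,y\}=\prod_{i} p_i^{\max(a_i,b_i)}$ lies in $I$, and Lemma \ref{loc15}(1) supplies some index $j$ with $p_j^{\max(a_j,b_j)}\in I$. Assuming without loss of generality that $\max(a_j,b_j)=a_j$, the containment $\langle p_j^{a_j}\rangle\subseteq I$ combined with $p_j^{a_j}\mid x$ yields $x\in I$. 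For the reverse direction I would reprove the criterion of Lemma \ref{loc15}(1) directly: induct on the number $k$ of distinct prime powers in $p_1^{n_1}\cdots p_k^{n_k}\in I$. Since $p_1^{n_1}$ and $q:=p_2^{n_2}\cdots p_k^{n_k}$ are coprime, $\mathrm{l.c.m.}\{p_1^{n_1},q\}=p_1^{n_1}q\in I$; the hypothesis forces $p_1^{n_1}\in I$ or $q\in I$, and the induction hypothesis finishes the latter case.

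For part (2), let $ab\in I$ with $a\notin I$. Write $a=u\prod p_i^{a_i}$ and $b=v\prod p_i^{b_i}$ so that $ab=uv\prod p_i^{a_i+b_i}\in I$, and apply Lemma \ref{loc15}(1) to obtain an index $j$ with $p_j^{a_j+b_j}\in I$. A short case analysis on $b_j$ is the crux: if $b_j=0$ then $p_j^{a_j}\in I$ and $p_j^{a_j}\mid a$ force $a\in I$, contradicting the assumption. Hence $b_j\geqslant 1$, and I would take $n:=a_j+1$. The inequality $(a_j+1)b_j\geqslant a_j+b_j$ reduces to $a_j(b_j-1)\geqslant 0$, which holds, so $p_j^{a_j+b_j}\mid p_j^{nb_j}\mid b^n$, giving $b^n\in\langle p_j^{a_j+b_j}\rangle\subseteq I$.

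The main obstacle is purely bookkeeping: aligning prime factorisations of $x$ and $y$ (or of $a$ and $b$) over a common prime list so that $\mathrm{l.c.m.}$ and multiplication read off cleanly from exponents, and keeping careful track of the units $u,v$, which do not obstruct the divisibility step since $p_j^{a_j}\mid x$ is invariant under multiplication by units. The only genuine arithmetic input is the inequality $(a_j+1)b_j\geqslant a_j+b_j$ in part (2); everything else is formal once Lemma \ref{loc15}(1) is in hand. Note that part (2) also follows \emph{a posteriori} from part (1), since $\mathrm{l.c.m.}\{a,b^n\}\mid ab^n\in I$ can be arranged, but the direct route above is shorter and avoids re-deriving l.c.m.\ identities.
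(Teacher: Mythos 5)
Your proposal is correct, but it takes a genuinely different route from the paper on part (1). The paper never touches prime factorisations there: it uses the identity $\langle x\rangle\cap\langle y\rangle=\langle \mathrm{l.c.m.}\{x,y\}\rangle$ in a UFSD, the fact that $\mathcal{C}_k$ commutes with finite intersections, and the element-wise criterion of Theorem~\ref{loc11} ($\mathcal{C}_k(\langle x\rangle)\cap\mathcal{C}_k(\langle y\rangle)\subseteq I$ forces $x\in I$ or $y\in I$), which makes both directions of (1) two-line arguments with no unit or exponent bookkeeping. You instead funnel both directions through Lemma~\ref{loc15}(1), in effect re-deriving the l.c.m.\ criterion combinatorially (your coprimality induction for the converse is exactly the kind of argument hidden inside Lemma~\ref{loc15}); this is more elementary and self-contained given that lemma, at the cost of handling factorisations up to associates and the trivial edge cases ($x$ or $y$ a unit or zero), which you should at least flag since l.c.m.\ is only defined up to units --- harmless because ideals absorb unit multiples. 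For part (2) your argument is essentially the paper's (Lemma~\ref{loc15}(1) applied to the product, then an exponent inequality), but carried out directly against the definition of $k$-primary in $S$ rather than via nilpotency in the Bourne quotient $S/I$; your explicit choice $n=a_j+1$ with $(a_j+1)b_j\geqslant a_j+b_j$ is cleaner than the paper's version, whose choice of $n$ contains a typographical slip in the exponents. Your closing remark that (2) follows from (1) by taking $n$ large enough that $\mathrm{l.c.m.}\{a,b^n\}$ is a multiple of $ab$ is also correct, though, as you say, unnecessary.
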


\begin{proof}
(1) Let $I$ be a $k$-strongly irreducible ideal of $S$ and let $z:=\mathrm{l.c.m.}\{x,y\} \in I$, for some $x$, $y \in S$. Now $z \in \langle x \rangle \cap \langle y \rangle$. In particular, as $z$ is the $\mathrm{l.c.m.}$, we have $\langle x \rangle \cap \langle y \rangle= \langle z \rangle \subseteq I$. Therefore, $\mathcal{C}_k(\langle x \rangle) \cap \mathcal{C}_k(\langle y \rangle) = \mathcal{C}_k(\langle z \rangle )\subseteq \mathcal{C}_k(I)=I.$ This implies $x \in I$ or $y \in I$, by Theorem~\ref{loc11}. Conversely, suppose that for each $x$, $y \in S $, $\mathrm{l.c.m.}\{x,y\}$ implies $x \in I$ or $y \in I$. Let $\mathcal{C}_k(\langle x \rangle) \cap \mathcal{C}_k(\langle y \rangle) \subseteq I$, which implies \[\langle x \rangle \cap \langle y \rangle \subseteq \mathcal{C}_k(\langle x \rangle \cap \langle y \rangle)= \mathcal{C}_k(\langle x \rangle) \cap \mathcal{C}_k(\langle y \rangle) \subseteq I.\] Therefore $\mathrm{l.c.m.}\{x,y\} \in \langle x \rangle \cap \langle y \rangle \subseteq I$. So, we get $x \in I$ or $y \in I$, by hypothesis. Hence, $I$ is $k$-strongly irreducible.

     (2) Suppose $I$ is a $k$-strongly irreducible ideal of $S$ and $x+I$ be a non-zero element of $S/I$ such that $(x+I)(y+I)=0+I$, where $y+I \neq I$. This implies $xy+I=0+I$. Therefore, $xy\in I$. Let
     \begin{align*}
     	x&:=p_1^{n_1}p_2^{n_2}\cdots p_k^{n_k}q_1^{l_1}q_2^{l_2}\cdots q_r^{l_r},\\
     	y&:=p_1^{s_1}p_2^{s_2}\cdots p_k^{s_k}t_1^{m_1}t_2^{m_2}\cdots t_u^{m_u}
     \end{align*}  be prime decomposition of $x$ and $y$ respectively. Since $xy \in I$, by Lemma~\ref{loc15} we have one of the following:
    \begin{enumerate}
        \item [(i)] for some $1\leqslant j \leqslant k$, $p_j^{n_j+s_j}\in I$,
        \item[(ii)] for some $1\leqslant j \leqslant r$, $q_j^{l_j}\in I$,
        \item[(iii)] for some $1\leqslant j \leqslant u$, $t_j^{m_j}\in I$. 
    \end{enumerate}
    Let (i) hold and $n$ be a natural number such that $n\geqslant l_j/n_j $. This implies $(n+1)n_j \geqslant n_j+s_j$. Therefore, $p_j^{n_j+s_j} | p_j^{(n+1)n_j}$, and clearly, $p_j^{(n+1)n_j} | x^{(n+1)}$. Then $p_j^{n_j+l_j}| x^{(n+1)}$. As $p_j^{n_j+l_j} \in I$, we have $x^{n+1} \in I$, which implies $x^{n+1}+I=I$. So $(x+I)^{n+1}=x^{n+1}+I=I$. Thus $x+I$ is nilpotent. 

    If (ii) holds, $q_j^{l_j}\in I$ implies $x \in I$. Then $x+I=I$, not possible. Similarly, (iii) cannot hold. Hence, $I$ is a primary $k$-ideal of $S$.
\end{proof}
As a consequence of the above results, we have the following corollaries.

\begin{corollary}
Let $S$ be a unique factorisation semidomain.
\begin{enumerate}

\item Every $k$-principal ideal of $S$ is a $k$-strongly irreducible ideal if and only if it is a
$k$-primary ideal.
	
\item Every $k$-strongly irreducible ideal of $S$ can be generated by a set of prime powers.
\end{enumerate}
\end{corollary}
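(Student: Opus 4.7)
The plan is to deduce both parts from Theorem~\ref{loc2} and Lemma~\ref{loc15}, the two key statements on prime-power characterisations in unique factorisation semidomains, together with unique factorisation of elements.

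For assertion (1), the forward implication is immediate from Theorem~\ref{loc2}(2), which asserts that every $k$-strongly irreducible ideal of a unique factorisation semidomain is $k$-primary; in particular this holds for $k$-principal ones. For the converse, suppose $I=\langle a\rangle$ is a non-zero proper $k$-principal ideal that is $k$-primary. Writing $a=p_1^{n_1}p_2^{n_2}\cdots p_k^{n_k}$ in its essentially unique prime factorisation, with the $p_i$ pairwise non-associate, the heart of the argument is to show $k=1$. Suppose, for contradiction, $k\geqslant 2$, and set $x:=p_1^{n_1}$ and $y:=p_2^{n_2}\cdots p_k^{n_k}$, so that $xy=a\in I$. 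Since $p_2\mid a$ while $p_2\nmid x$, we have $a\nmid x$, and hence $x\notin\langle a\rangle=I$. The $k$-primarity of $I$ then forces $y^m\in I$ for some $m\in\mathds{N}$, i.e., $a\mid y^m$, contradicting the fact that $p_1\mid a$ but $p_1\nmid y^m$. So $a$ is a prime power, and by Lemma~\ref{loc15}(2), $I$ is $k$-strongly irreducible. The degenerate case $I=\{0\}$ is handled separately by observing that $\{0\}$ is $k$-prime in any semidomain, and therefore $k$-strongly irreducible.

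For assertion (2), let $I$ be a proper $k$-strongly irreducible ideal (the improper case being vacuous), and define
\[
T:=\{p^n\in I\mid p\text{ is a prime element of }S,\ n\in\mathds{N}\}.
\]
The inclusion $\langle T\rangle\subseteq I$ is immediate, since $T\subseteq I$. For the reverse inclusion, pick any non-zero $a\in I$ and factor $a=p_1^{n_1}\cdots p_k^{n_k}$. By Lemma~\ref{loc15}(1), there exists $j$ with $p_j^{n_j}\in I$, so $p_j^{n_j}\in T$; since $p_j^{n_j}\mid a$, we obtain $a\in\langle p_j^{n_j}\rangle\subseteq\langle T\rangle$. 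Together with $0\in\langle T\rangle$, this yields $I=\langle T\rangle$.

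The main obstacle lies in the reverse implication of (1), where the combinatorial interplay between unique factorisation and the $k$-primary hypothesis must be exploited to force the generator to have a single prime factor. Once this reduction is accomplished, the remainder of the argument, including part (2), follows by direct appeal to Lemma~\ref{loc15}.
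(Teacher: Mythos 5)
Your proposal is correct and follows exactly the route the paper intends: the corollary is stated without proof as an immediate consequence of Lemma~\ref{loc15} and Theorem~\ref{loc2}, and your argument (forward direction of (1) from Theorem~\ref{loc2}(2), the converse by forcing the generator of a $k$-primary principal ideal to be a prime power via unique factorisation and then invoking Lemma~\ref{loc15}(2), and part (2) from Lemma~\ref{loc15}(1)) supplies precisely the omitted details, handling the zero ideal correctly along the way.
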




\begin{corollary}\label{loc4} 
Let $S$  be a semiring and let $T$ be a multiplicatively closed subset of $S$.

\begin{enumerate}
\item If $I$ is a $k$-strongly irreducible and a $k$-primary ideal of $S$ which does not meet $T$,
then $T\inv I$ is a $k$-strongly irreducible (and a $k$-primary) ideal of $T\inv S$.

\item If $S$ is a unique factorisation semidomain or a von Neumann regular semiring, then there is a one-to-one correspondence between the $k$-strongly irreducible ideals of $T\inv S$ and the
$k$-strongly irreducible ideals of $S$ which do not meet $T$.
\end{enumerate}
\end{corollary}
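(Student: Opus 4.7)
The plan is to deduce both parts of the corollary directly from the two immediately preceding localisation theorems for $k$-strongly irreducible $k$-primary ideals, after a small compatibility check: if $I$ is $k$-primary then the hypothesis $I\cap T=\emptyset$ automatically upgrades to $\mathcal{R}_k(I)\cap T=\emptyset$. Indeed, for a $k$-primary ideal $I$, one has $x\in\mathcal{R}_k(I)$ if and only if $x^n\in I$ for some $n\in\mathds{N}$: the non-obvious direction follows from Proposition~\ref{krull's theorem} applied to the multiplicatively closed set $\{1,x,x^2,\dots\}$, which is disjoint from $I$ whenever no power of $x$ lies in $I$, producing a $k$-prime ideal containing $I$ that misses this set and hence misses $x$. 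Consequently any $x\in\mathcal{R}_k(I)\cap T$ would yield $x^n\in I\cap T$, so $I\cap T=\emptyset$ forces $\mathcal{R}_k(I)\cap T=\emptyset$.

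For part (1), Proposition~\ref{loc6}(1) immediately delivers that $T\inv I$ is $k$-primary; combining the compatibility observation above with the first of the two immediately preceding theorems, the one that ascends $k$-strong irreducibility from $S$ to $S_T$ under the radical disjointness hypothesis, then shows that $T\inv I$ is also $k$-strongly irreducible.

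For part (2), in a unique factorisation semidomain every $k$-strongly irreducible ideal is $k$-primary by Theorem~\ref{loc2}(2), and in a von Neumann regular semiring the same implication holds by Proposition~\ref{loc3}(3); hence in either setting part~(1) supplies a well-defined forward map $\Phi(I):=T\inv I$. For the inverse, I send a $k$-strongly irreducible ideal $J$ of $T\inv S$ to $\Psi(J):=J^c$. The contraction $J^c$ is a $k$-ideal of $S$ by Lemma~\ref{loc5}(1), and since every ideal $J$ of $T\inv S$ satisfies $T\inv(J^c)=J$, Proposition~\ref{loc7} (applied with $I:=J^c$) shows that $J^c$ itself is $k$-strongly irreducible; moreover $J^c\cap T=\emptyset$, for otherwise Lemma~\ref{loc8} would force $J=(J^c)^e=\langle 1\rangle$, contradicting the properness of $J$.

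It remains to verify that $\Phi$ and $\Psi$ are mutual inverses. The identity $\Phi\circ\Psi=\mathrm{id}$ is the standard fact $J=T\inv(J^c)$, while $\Psi\circ\Phi=\mathrm{id}$, namely $T\inv I\cap S=I$, is precisely Proposition~\ref{loc6}(2), which applies because $I$ is $k$-primary and the compatibility check secures $\mathcal{R}_k(I)\cap T=\emptyset$. The main obstacle I anticipate is the compatibility step, since once it is in hand, the rest assembles cleanly from results already proven.
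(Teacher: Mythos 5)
Your argument is correct, but it reaches the conclusion by a route that differs from the paper's, mainly in part (1). The paper does not pass through the radical-disjointness hypothesis at all: it verifies directly, by an element computation using only $k$-primariness and $I\cap T=\emptyset$, that $(T\inv I)^c=I$, so that $I$ lies in the class $C$ of contracted ideals, and then quotes the correspondence of Theorem~\ref{loc} to get $k$-strong irreducibility of $T\inv I$; $k$-primariness of $T\inv I$ is then checked by hand rather than by Proposition~\ref{loc6}(1). You instead invoke the two unlabelled localisation theorems at the end of Section~\ref{Section 5: k-ideals under localisation}, whose hypothesis is $\mathcal{R}_k(I)\cap T=\emptyset$, and you bridge the gap with the observation that for a $k$-ideal the subtractive Krull theorem (Proposition~\ref{krull's theorem}, applied to the multiplicative set of powers of $x$) identifies $\mathcal{R}_k(I)$ with $\{x\in S\mid x^n\in I\ \text{for some}\ n\}$, so that $I\cap T=\emptyset$ already forces $\mathcal{R}_k(I)\cap T=\emptyset$; note this identification needs no $k$-primariness, and it is a worthwhile remark in its own right, since it explains why the corollary can be stated with the weaker hypothesis $I\cap T=\emptyset$ while the Section~5 theorems demand radical disjointness. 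In part (2) your treatment is slightly more scrupulous than the paper's: the paper obtains the contraction direction from Theorem~\ref{loc} and leaves the mutual-inverse verification implicit, whereas you build the inverse via Proposition~\ref{loc7} together with $T\inv(J^c)=J$, rule out $J^c\cap T\neq\emptyset$ by Lemma~\ref{loc8}, and check $\Phi\circ\Psi=\mathrm{id}$ and $\Psi\circ\Phi=\mathrm{id}$ explicitly through Proposition~\ref{loc6}(2). Two cosmetic caveats: the theorems you call ``immediately preceding'' are in fact the closing results of Section~\ref{Section 5: k-ideals under localisation}, not of the subsection containing the corollary; and, as in Theorem~\ref{loc}, the correspondence in (2) should be read as one between \emph{proper} $k$-strongly irreducible ideals, which you use implicitly when you appeal to the properness of $J$.
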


\begin{proof}
(1) We first show that if $I$ is a $k$-primary ideal of $S$ and $I\cap T=\emptyset$, then $(T\inv I)^c=I$. Clearly $I\subseteq (T\inv I)^c$. Let $x\in (T\inv I)^c$. Then $\frac{x}{1}\in T\inv I$. This implies $\frac{x}{1}=\frac{i}{s}$, for some $i\in I$ and $s\in T$, which further implies $xs=i\in I$. Let $x\notin I$. Then $s^n\in I$, for some natural number $n$. This implies $s^n\in I\cap T$, a contradiction. Therefore $x\in I$. Thus $(T\inv I)^c=I$, which implies $I$ is contained in $C$. Then it follows by Theorem~\ref{loc} that $T\inv I$ is a $k$-strongly irreducible ideal of $T\inv S$. It remains to show that $T\inv I$ is a $k$-primary ideal. Suppose $xy \in T\inv I$. There must exist $u_1$, $u_2 \in S$ and $s_1$, $s_2 \in T$ such that $x=\frac{u_1}{s_1} $ and $y=\frac{u_2}{s_2}$. Then $u_1 u_2 \in I$, and hence,  $u_1 \in I$ or $u_2^n \in I$. Thus $x\in T\inv I$ or $y^n \in I$.
    
(2) It follows from Theorem~\ref{loc2} and Proposition~\ref{loc3} that in a unique factorisation semidomain and in a von Neumann regular semiring, every $k$-strongly irreducible ideal is a $k$-primary ideal. Then, by (1), $T\inv I$ is a $k$-strongly irreducible ideal of $T\inv S$. Also, we have, by Theorem~\ref{loc}, for each $k$-strongly irreducible ideal $I$ of $T\inv S,\ I^c$ is a $k$-strongly irreducible ideal of $S$ that does not meet $T$.
\end{proof}

The converse of Lemma~\ref{loc16} fails to hold in general semirings. The following theorem shows that this converse is valid in the more restrictive setting of a \emph{principal ideal semidomain}.

\begin{theorem}
Each non-zero $k$-prime ideal of a principal ideal semidomain is $k$-maximal.
\end{theorem}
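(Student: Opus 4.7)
The plan is to mimic the classical principal ideal domain argument, with two small twists coming from the subtractive framework. Given a non-zero $k$-prime ideal $P$ of a principal ideal semidomain $S$, I will fix an arbitrary proper $k$-ideal $M \supseteq P$ and aim to show $M = P$, which forces $P$ to be $k$-maximal. Using the principal ideal hypothesis, both ideals acquire generators: $P = \langle p \rangle$ with $p \neq 0$ (since $P$ is non-zero), and $M = \langle m \rangle$ with $m$ a non-unit (since $M$ is proper).

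From $p \in M$ I obtain a factorisation $p = ms$ for some $s \in S$. The key step is then to invoke the exchange principle recalled in the introduction, which says that a $k$-prime ideal is simultaneously a $k$-ideal and a prime ideal. Since prime ideals behave element-wise in the usual way, the relation $ms \in P$ splits into two cases: $m \in P$ or $s \in P$. The first case immediately yields $M = \langle m \rangle \subseteq P$, hence $M = P$. In the second case I will write $s = pt$ using $s \in \langle p \rangle$, substitute into $p = ms$ to get $p = mpt$, and cancel the non-zero element $p$ using the semidomain property; this forces $1 = mt$, so $m$ becomes a unit, contradicting the properness of $M$. Consequently only the first case is feasible and $M = P$.

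The hard part, if any, is ensuring that the element-wise formulation of primeness is legitimately available in the subtractive setting; the exchange principle supplies precisely this, so the classical argument transfers almost verbatim. The semidomain hypothesis, which ensures cancellation by non-zero elements, is the other indispensable ingredient, since it is exactly what rules out the otherwise-problematic branch $s \in P$. No appeal to Zorn's lemma or to $k$-closure is needed, because $M$ is taken to be an arbitrary proper $k$-ideal above $P$, not specifically a $k$-maximal one.
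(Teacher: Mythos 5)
Your proposal is correct and follows essentially the same route as the paper: write $P=\langle p\rangle$ and the larger $k$-ideal as a principal ideal, factor $p$ through its generator, apply element-wise primeness of $P$, and use semidomain cancellation to dispose of the bad case. The only cosmetic differences are that you cancel $p$ rather than the cofactor and phrase the conclusion as a contradiction with properness instead of deducing that the larger ideal equals $S$; your explicit appeal to the exchange principle to justify element-wise primeness is a welcome touch the paper leaves implicit.
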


\begin{proof}
Let $S$ be a principal ideal semidomain and $P$ be a $k$-prime ideal of $S$. Then $P=\langle p \rangle$, for some element $p$ of $S$. Suppose $Q$ is a $k$-ideal of $S$ containing $P$. Then $Q=\langle q \rangle$, for some $q\in S$. Since $p\in Q$, we have $p=rq$, for some non-zero $r\in S$. As $P$ is $k$-prime, $rq\in P$ implies $r\in P$ or $q\in P$. If $q\in P=\langle p \rangle$, we have $Q=P$. If $r\in P=\langle p \rangle$, then $r=pr'=rqr'$, for some $r'\in S$. As $S$ is a semidomain and $r\neq 0$, $r=rqr'$ implies $qr'=1$, which further implies $Q=S$.	
\end{proof}

Recall that a partially ordered ring $(R,+,\cdot,\leqslant)$ is ring with a partial order such that for all $a$, $b$, $c\in R$ and $a\leqslant b$ implies $a+c\leqslant b+c$, and $0\leqslant a$, $0\leqslant b$ implies $0\leqslant ab$. The positive cone of $(R,+,\cdot,\leqslant)$ is the set $R^+=\{x\in R\mid 0\leqslant x\}$. The positive cone of a partially ordered semiring is always a \emph{cancellative} and \emph{conic} (or, \emph{zero-sum free}) semiring, that is, $a+b=a+c$ implies $b=c$ and $a+b=0$ implies $a=0$ and $b=0$.

\begin{lemma}\label{loc9}
    Let $S$ be a positive cone of a partially ordered ring $R$ such that $D(S)=R$, where $D(S):=\{a-b \mid a,b\in S\}$. Then any principal ideal of $S$ is a $k$-ideal.
\end{lemma}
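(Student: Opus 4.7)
The plan is to unfold the $k$-ideal condition and reduce the question to a saturation property of the principal ideal with respect to the ambient ring $R$. I fix $s\in S$ and assume $y\in Ss$ and $x+y\in Ss$ for some $x$, $y\in S$. Writing $y=bs$ and $x+y=as$ with $a$, $b\in S$, and subtracting $y$ from $x+y$ inside the ring $R$, yields the equality $x=(a-b)s$ in $R$, so that $x$ is witnessed as an element of the $R$-ideal $Rs$.

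Next, I would reduce the claim to the identity $Rs\cap S=Ss$. This reduction is sound because every ring ideal is trivially a $k$-ideal and the intersection of any $k$-ideal of $R$ with $S$ is a $k$-ideal of $S$: if $u$, $u+v\in Rs\cap S$, then $v=(u+v)-u\in Rs$ while $v\in S$ by assumption. The containment $Ss\subseteq Rs\cap S$ is immediate, so the substance of the lemma is the reverse inclusion.

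To establish $Rs\cap S\subseteq Ss$, I take $r\in R$ with $rs\in S$ and invoke the hypothesis $D(S)=R$ to write $r=r_{1}-r_{2}$ with $r_{1}$, $r_{2}\in S$. This gives the equality $rs+r_{2}s=r_{1}s$ in $S$, and the plan is to read this as an order comparison $r_{2}s\leqslant r_{1}s$ in the positive cone. Exploiting cancellativity and conicness of $S$, together with the compatibility of $\leqslant$ with multiplication, one aims to deduce $r_{1}\geqslant r_{2}$ in $R$, and hence $r\in S$, which delivers $rs\in Ss$ as desired.

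The hardest step will be this last order-theoretic extraction, namely passing from $(a-b)s\geqslant 0$ together with $s\geqslant 0$ to $a-b\geqslant 0$. Such an implication is not automatic in a general partially ordered ring, so the cancellative-conic structure of $S$ together with the generating condition $D(S)=R$ must be used essentially; identifying the precise way these hypotheses interlock to secure the positivity of the ``quotient'' $a-b$ is where the argument truly rests.
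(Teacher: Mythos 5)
Your reduction is sound and is essentially the paper's own first move in disguise: subtracting inside $R$ shows that, when $D(S)=R$, the principal ideal $\langle a\rangle=Sa$ is a $k$-ideal precisely when $Ra\cap S\subseteq Sa$. The problem is the step you yourself flag as the crux and leave open: from $(r_1-r_2)a\geqslant 0$ and $a\geqslant 0$ you would need $r_1-r_2\geqslant 0$, and no interlocking of cancellativity, conicness and $D(S)=R$ can deliver this, because the implication (and in fact the lemma as stated) fails for general partially ordered rings. Take $R=\mathds{Z}[X]$ with the coefficientwise order, so that $S=R^+$ is the semiring of polynomials with nonnegative coefficients and $D(S)=R$. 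For $a=1+X+X^2$, put $x=1+X^2+X^4$ and $y=Xa=X+X^2+X^3$. Then $y\in\langle a\rangle$ and $x+y=(1+X^2)a\in\langle a\rangle$, while $x=(1-X+X^2)a$; since $\mathds{Z}[X]$ is a domain this is the only possible cofactor, and $1-X+X^2\notin S$, so $x\notin\langle a\rangle$. Hence $\langle a\rangle$ is not a $k$-ideal although every hypothesis of the lemma holds, so the gap in your proposal is not a technical detail to be smoothed over but an unbridgeable one.

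For comparison, the paper's printed proof performs exactly your subtraction, writes $y=a(t-s)$, and then simply asserts $0\leqslant t-s$; that assertion is precisely the order-theoretic extraction you could not justify, and the example above shows it is false in this generality. So your diagnosis of where ``the argument truly rests'' is correct, but neither your proposal nor the printed argument closes it; the conclusion does hold under stronger hypotheses, for instance when $R$ is a totally ordered domain (or, more generally, when the generator $a$ is order-cancellable, i.e.\ $ra\geqslant 0$ with $a\neq 0$ forces $r\geqslant 0$), and some such assumption would have to be added for the lemma to stand.
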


\begin{proof}
    For an element $a\in S$, let $x+y\in \langle a\rangle$ and $x\in \langle a\rangle$. Therefore $x+y=at$ and $x=as$, for some $s$, $t\in S$. 
    Since $S$ is the positive cone of $R$, we have $0\leqslant x$, $y$ and $y\leqslant x+y$, which further implies that $y=a(t-s)\in D(S)$ and $0\leqslant t-s$. Therefore $t-s\in S$ and hence $y\in \langle a\rangle$. 
\end{proof}

The next theorem states an elementary property of a $k$-prime ideal in a \emph{unique factorisation semidomain}.

\begin{theorem}
In a \emph{UFSD} every non-zero $k$-prime
ideal of $S$ contains a prime element.
\end{theorem}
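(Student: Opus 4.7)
The plan is to imitate the classical argument in a UFD, with the observation that in the commutative semiring setting a $k$-prime ideal still satisfies the element-wise primality condition, thanks to the exchange principle (it is simultaneously a $k$-ideal and a prime ideal in Golan's sense). So the core strategy is: pick any non-zero element of $P$, factor it into primes via the UFSD hypothesis, and use element-wise primality to trap one of the prime factors inside $P$.

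In detail, let $P$ be a non-zero (and, without loss of generality, proper) $k$-prime ideal of the UFSD $S$. Choose any $0 \neq a \in P$. If $a$ is a unit we would have $P = S$, a contradiction, so $a$ is a non-zero non-unit. By the defining property of a unique factorisation semidomain, $a$ admits a factorisation
\[
a \;=\; p_1 p_2 \cdots p_n
\]
into irreducible elements of $S$; since $S$ is a UFSD, each $p_i$ is in fact a prime element, that is, each principal ideal $\langle p_i\rangle$ is prime.

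Now I would iterate the element-wise prime condition for $P$. Writing $a = p_1 \cdot (p_2 \cdots p_n) \in P$, primality of $P$ forces either $p_1 \in P$ or $p_2 \cdots p_n \in P$; in the latter case I repeat the argument on $p_2 \cdots p_n$. After at most $n$ steps one obtains some index $j$ with $p_j \in P$, and this $p_j$ is a prime element of $S$ lying in $P$, as required.

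The only delicate point is the transition from the ideal-theoretic definition of $k$-prime (phrased in terms of products of two $k$-ideals) to the element-wise condition $xy \in P \Rightarrow x \in P$ or $y \in P$; this is however a standard consequence of the exchange principle cited in the introduction together with the fact that in a commutative semiring, prime ideals in the sense of Golan satisfy the element-wise criterion. Once this is granted, the proof reduces to a straightforward finite induction on the length of the prime factorisation, and no additional obstacle arises.
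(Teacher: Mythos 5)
Your proposal is correct and follows essentially the same route as the paper: choose a non-zero non-unit element of the $k$-prime ideal, factor it into prime elements using the UFSD hypothesis, and apply the element-wise primality of the (prime) ideal to conclude that some prime factor lies in it. The reduction of the ideal-theoretic $k$-prime condition to the element-wise one is also exactly what the paper implicitly uses, so no gap remains.
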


\begin{proof}
Let $S$ be a UFSD and $I$ be a non-zero $k$-prime ideal of $S$. Let $x\in P$ be a non-zero and non-unit element. Then $x=p_1p_2\cdots p_n$,where $p_i's$ are distinct prime elements of $S$ for each $i$. Since $I$ is a prime ideal of $S$, there exists an $i$ such that $p_i\in I$. 
\end{proof}

The converse of the above theorem is true if $S$ is a cancellative, partially ordered, conic semiring.

\begin{theorem}
    If $S$ is a cancellative, partially ordered, conic semidomain and if every non-zero $k$-prime ideal of $S$ contains a prime element, then $S$ is a \emph{UFSD}.
\end{theorem}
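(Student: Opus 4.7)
The plan is to adapt Kaplansky's classical characterisation of UFDs to the subtractive semiring setting, with Proposition~\ref{krull's theorem} playing the role of the Zorn--Krull machinery from commutative algebra.

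First, I would introduce the set
\[
T := \{u \cdot p_1 p_2 \cdots p_n \mid n \geqslant 0,\ u \text{ a unit of } S,\ \text{each } p_i \text{ a prime element of } S\},
\]
where $n = 0$ recovers the group of units. The key preparatory claim is that $T$ is a multiplicatively closed, saturated subset of $S$ with $0 \notin T$; here \emph{saturated} means $ab \in T \Longrightarrow a,b \in T$. Saturation is proved by induction on the number of prime factors: primeness of $p_1$ forces $p_1 \mid a$ or $p_1 \mid b$, and cancellativity of the semidomain permits us to strip $p_1$ and iterate. Note $0 \notin T$ because a semidomain is cancellative and $1 \neq 0$.

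The heart of the argument is to show that every nonzero non-unit of $S$ lies in $T$. Suppose for contradiction that some nonzero non-unit $x$ lies outside $T$. Since $S$ is cancellative, conic and partially ordered, we may realise $S$ as the positive cone of its ring of differences, so Lemma~\ref{loc9} tells us that $\langle x \rangle$ is already a $k$-ideal. Saturation of $T$ forces $\langle x \rangle \cap T = \emptyset$, since any $xs \in T$ would pull $x$ into $T$. A routine Zorn's lemma argument — using that the union of a chain of $k$-ideals disjoint from $T$ is again a $k$-ideal disjoint from $T$ — produces a $k$-ideal $P$ containing $\langle x \rangle$ that is maximal among $k$-ideals disjoint from $T$. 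By Proposition~\ref{krull's theorem}, $P$ is $k$-prime, and it is nonzero because $x \neq 0$. The hypothesis of the theorem then supplies a prime element $p \in P$; but $p \in T$, contradicting $P \cap T = \emptyset$.

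Finally I would verify the UFSD axioms. Every nonzero non-unit admits a prime factorisation by the previous step. For the implication \emph{irreducible $\Rightarrow$ prime}, write an irreducible $r = u p_1 \cdots p_n$; since prime elements are non-units, irreducibility forces $n = 1$, so $r$ is an associate of a prime and hence prime. Uniqueness of the factorisation up to reordering and units is then a standard induction on length, using the defining property of primes together with cancellativity. The principal obstacle I anticipate lies in setting up the saturated set $T$ correctly and verifying that Proposition~\ref{krull's theorem} applies: both steps hinge crucially on Lemma~\ref{loc9} to promote $\langle x \rangle$ to a $k$-ideal and on the chain-closure of $k$-ideals disjoint from $T$. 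Once these subtractive technicalities are dispatched, the remainder of the proof proceeds along classical lines.
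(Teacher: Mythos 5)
Your proposal is correct and follows essentially the same route as the paper: the Kaplansky-style argument with the saturated multiplicative set $T$ of units and products of primes, Lemma~\ref{loc9} to see that $\langle x\rangle$ is a $k$-ideal, and a Zorn--Krull expansion via Proposition~\ref{krull's theorem} to a $k$-prime ideal disjoint from $T$, contradicting the hypothesis. You merely spell out details the paper leaves implicit (the saturation check, the Zorn argument, and the equivalence of ``$T=S\setminus\{0\}$'' with the UFSD axioms), which is fine.
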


\begin{proof}
Suppose $S$ is a cancellative, partially ordered, conic semidomain and every non-zero $k$-prime ideal of $S$ contains a prime element. We assume that $S$ is not a UFSD. Let \[T=\{u\in S \mid u \;\text{is a unit} \} \bigcup \{p_1p_2\cdots p_n\in S\}.\] Then $T$ is a multiplicatively closed saturated subset of $S$. Now, $S$ is a UFSD if and only if $T=S\setminus \{0\}$. As $S$ is not a UFSD, there exists a non-zero $x\in S$ such that $x\notin T$. By Lemma~\ref{loc9}, it is clear that $\langle x\rangle$ is a $k$-ideal. We claim that $\langle x\rangle \cap T=\emptyset$. As if $rx\in T$, for some $r\in S$, then $r\in T$. Therefore, $\langle x \rangle$ can be expanded to a $k$-prime ideal $I$ of $S$ which is disjoint from $T$. Then, by hypothesis, $I$ contains a prime element, a contradiction. This completes the proof.
\end{proof}

Let us recall that in a \emph{weakly Noetherian semiring} every set of $k$-ideals satisfies the ascending chain condition. The following proposition reveals an interesting structural property of $k$-irreducible ideals in \emph{weakly Noetherian semirings}. 

\begin{proposition}\label{weakly Noetherian semiring, every k-irreducible ideal is k-primary}
    Let $S$ be a weakly Noetherian semiring. Then every $k$-irreducible ideal is $k$-primary.
\end{proposition}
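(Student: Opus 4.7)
The plan is to mimic the classical Lasker--Noether argument that every irreducible ideal in a Noetherian ring is primary, with the subtraction step of the ring proof replaced by the defining closure property of a $k$-ideal. Let $I$ be a $k$-irreducible $k$-ideal of the weakly Noetherian semiring $S$, suppose $ab\in I$ with $a\notin I$, and assume towards a contradiction that $b^n\notin I$ for every $n\geqslant 1$; the goal is to produce $k$-ideals $A,B\supsetneq I$ with $A\cap B=I$, in direct conflict with $k$-irreducibility.

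For each $k\geqslant 1$ the colon $(I:b^k):=\{s\in S\mid sb^k\in I\}$ is a $k$-ideal (an immediate consequence of $I$ being one), and the chain $(I:b)\subseteq(I:b^2)\subseteq\cdots$ is ascending, so the weakly Noetherian hypothesis furnishes an integer $n$ with $(I:b^n)=(I:b^{n+1})$. With this $n$ fixed, I set
\[
A:=\mathcal{C}_k(I+\langle a\rangle)\qquad\text{and}\qquad B:=\mathcal{C}_k(I+\langle b^n\rangle),
\]
both of which are $k$-ideals strictly containing $I$ because $a\in A\setminus I$ and, by the contradiction hypothesis, $b^n\in B\setminus I$.

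The heart of the argument is the equality $A\cap B=I$. Using the identity $\mathcal{C}_k(J_1)\cap\mathcal{C}_k(J_2)=\mathcal{C}_k(J_1\cap J_2)$ from the $k$-closure lemma, I would reduce to showing $(I+\langle a\rangle)\cap(I+\langle b^n\rangle)\subseteq I$. Take $x=i_1+sa=i_2+rb^n$ with $i_1,i_2\in I$ and $s,r\in S$. Multiplying the first expression by $b$ yields $xb=i_1b+s(ab)\in I$, since $ab\in I$; multiplying the second by $b$ yields $xb=i_2b+rb^{n+1}$. Because $xb,\,i_2b\in I$ and $I$ is a $k$-ideal, it follows that $rb^{n+1}\in I$, so by stabilisation $r\in(I:b^{n+1})=(I:b^n)$, whence $rb^n\in I$ and $x=i_2+rb^n\in I$. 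Taking $\mathcal{C}_k$ now gives $A\cap B\subseteq\mathcal{C}_k(I)=I$, with the reverse inclusion trivial; $k$-irreducibility then forces $A=I$ or $B=I$, contradicting the strict inclusions.

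The principal obstacle is that the ring-theoretic rewriting $rb^{n+1}=xb-i_2b\in I$ is unavailable in the semiring setting. The $k$-ideal axiom ``$u+v\in I$ and $u\in I$ imply $v\in I$'' provides the precise surrogate, applied here to $u=i_2b$ and $v=rb^{n+1}$; once this substitution is recognised, the remaining ingredients (the colon stabilisation and the commutation of $\mathcal{C}_k$ with finite intersections) are standard.
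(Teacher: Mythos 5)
Your proof is correct and takes essentially the same route as the paper's: the classical Noether argument via stabilisation of the colon chain $(I:b^k)$ (the paper works with $(I:a^k)$, i.e.\ with the roles of $a$ and $b$ interchanged) together with two ideals containing $I$ whose intersection is shown to equal $I$, the $k$-ideal axiom serving as the surrogate for subtraction. Your only deviations are cosmetic: you argue by contradiction rather than directly, and you pass to the $k$-closures $\mathcal{C}_k(I+\langle a\rangle)$ and $\mathcal{C}_k(I+\langle b^n\rangle)$, which in fact tidies a point the paper leaves implicit, since $k$-irreducibility is formulated only for $k$-ideals whereas the paper intersects $\langle a^m\rangle+I$ and $\langle b\rangle+I$ directly.
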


\begin{proof}
    Let $I$ be a $k$-irreducible ideal of $S$ and $ab\in I$ with $b\notin I$. For each $n\geqslant 1$, let $I_n:=\{r\in S \mid a^nr\in I\}$. Then each $I_n$ is a $k$-ideal of $S$ such that $I_n\subseteq I_{n+1}$, for all $n$. Therefore, there exists $m\in \mathds{N}$ such that $I_n=I_{n+1}$, for all $n\geqslant m$ as $S$ is weakly Noetherian. Let $A=\langle a^m\rangle+I$, $B=\langle b \rangle+I$. We show that $A\cap B=I$. Let $x\in A\cap B$. Then $x=a^mr+i_1$, for some $r\in S$ and $i_1\in I$. Also $x=bs+i_2$, for some $s\in S$ and $i_2\in I$. So, $ax=abs+ai_2\in \langle ab \rangle+I$. Since $ab\in I$ $ax\in I$. Now $ax=a^{m+1}r+ai_1\in I$. Then $a^{m+1}r\in I$, as $I$ is a $k$-ideal. This implies $r\in I_{m+1}=I_m$, \textit{i.e.}, $a^mr\in I$, which further implies $x\in I$. Since $I$ is $k$-irreducible and $b\notin I$, we get $A=I$. Therefore, it follows that $a^m\in i$. Hence $I$ is $k$-primary.
\end{proof}

\smallskip
\subsection{Additively idempotent semirings}\label{Subsection: additively idempotent}

In this final segment, we investigate variants of $k$-ideals and $k$-congruences in the milieu of additively idempotent semirings. It is worth emphasizing that additively idempotent semirings may deviate substantially from the class of subtractive semirings (\textit{cf.} \cite[Example 6.39]{Golan}), thereby rendering the study of $k$-ideals particularly compelling.
This perspective enables us to strengthen several results of \cite{Lescot11, Lescot15}, where additively idempotent semirings are studied through the lens of \emph{$B_1$-algebras}. Here $B_1$ denotes the two-element Boolean algebra $\{0,1\}$, commonly referred to as the \emph{Boolean semifield}. We recall that, in additively idempotent semirings, the product of two $k$-ideals is again a $k$-ideal; see \cite[Lemma 3.27]{JunRayTolliver22}. This property fails in the broader class of general semirings; see \cite[Example 6.43]{Golan}.

The following definition is a subtractive version of the definition of cancellation ideals, first introduced by LaGrassa in her thesis \cite{LaGrassa95}.

\begin{definition}
A non-zero $k$-ideal $I$ of a semiring $S$ is called a \emph{$k$-cancellation} ideal,
if $IJ = IK$ implies $J = K$, for all $k$-ideals $J$ and $K$ of $S$.
\end{definition}
The following proposition characterizes $k$-cancellation ideals in terms of colon ideals and ideal inclusions in an additively idempotent setup. 
\begin{proposition}
Let $S$ be an additively idempotent semiring and $I$ be a non-zero $k$-ideal of $S$. Then the following statements are equivalent:
\begin{enumerate}
\item $I$ is a $k$-cancellation ideal of $S$.

\item $(IJ : I)=J$, for any $k$-ideal $J$ of $S$.

\item $IJ\subseteq IK$ implies $J\subseteq K$, for all $k$-ideals $J$, $K$ of $S$.
\end{enumerate}
\end{proposition}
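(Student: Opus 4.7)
The plan is to prove the cyclic chain of implications $(1) \Rightarrow (2) \Rightarrow (3) \Rightarrow (1)$, after first establishing a preparatory fact about the colon ideal $(IJ:I) = \{x \in S \mid xI \subseteq IJ\}$.

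I would first verify that $(IJ:I)$ is itself a $k$-ideal of $S$. The key input here is that $IJ$ is a $k$-ideal, which uses precisely the additive idempotence hypothesis via \cite[Lemma~3.27]{JunRayTolliver22} (as recalled in the preamble to this subsection). Given this, if $x$ and $x+y$ both lie in $(IJ:I)$, then for every $a \in I$ we have $xa$ and $xa + ya$ in $IJ$; the $k$-property of $IJ$ yields $ya \in IJ$, so $yI \subseteq IJ$ and hence $y \in (IJ:I)$.

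For $(1) \Rightarrow (2)$, I would set $K := (IJ:I)$. The inclusion $J \subseteq K$ is immediate from the definition of the colon, and this gives $IJ \subseteq IK$; the reverse inclusion $IK \subseteq IJ$ is the defining property of $K$. Thus $IJ = IK$, and since $K$ is a $k$-ideal by the preparatory step, the cancellation hypothesis yields $J = K$. For $(2) \Rightarrow (3)$, suppose $IJ \subseteq IK$; for any $x \in J$ one has $xI \subseteq IJ \subseteq IK$, so $x \in (IK:I) = K$ upon applying (2) with $K$ in place of $J$. Finally, $(3) \Rightarrow (1)$ is immediate: if $IJ = IK$, the two inclusions $IJ \subseteq IK$ and $IK \subseteq IJ$ yield $J \subseteq K$ and $K \subseteq J$ respectively.

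The main subtlety, and the point where additive idempotence genuinely matters, is the preparatory step. Without this hypothesis the product of two $k$-ideals need not be a $k$-ideal (cf.\ \cite[Example~6.43]{Golan}), so $(IJ:I)$ could fail to be a $k$-ideal and the cancellation condition in (1) would not be directly applicable to the equality $IJ = IK$; everything else in the argument is a formal manipulation of ideal inclusions.
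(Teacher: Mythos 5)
Your proof is correct and follows essentially the same route as the paper: establish that $IJ$ (via additive idempotence) and hence $(IJ:I)$ are $k$-ideals, show $IJ = I(IJ:I)$ to get (1)$\Rightarrow$(2) by cancellation, then deduce (2)$\Rightarrow$(3) by comparing colon ideals and note (3)$\Rightarrow$(1) is formal. The only cosmetic difference is that you verify the subtractivity of the colon ideal directly, where the paper cites \cite[Proposition 2.4]{GoswamiDube24}.
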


\begin{proof}
Let $I,J,K$ be $k$-ideals of $S$.

(1)$\Rightarrow$(2): 
Obviously, $IJ\subseteq (IJ:I)$. Let $i_1x_1+i_2x_2+\cdots + i_nx_n\in I(IJ:I)$, $i_k\in I$, and $x_k\in I$, for all $k$. $x_kI\subseteq IJ$, for all $k$. Therefore, $\sum_{k=1}^n i_kx_k\in IJ$. So, $I(IJ:I)\subseteq IJ$. Now, for all $j\in J$, we have $jI\subseteq IJ$. Therefore $IJ\subseteq I(IJ:I)$. Thus $IJ=I(IJ:I)$. Since $S$ is an additively idempotent semiring, then by \cite[Lemma 3.27]{JunRayTolliver22}, $IJ$ is a $k$-ideal, furthermore by \cite[Proposition 2.4]{GoswamiDube24}, $(IJ:I)$ is a $k$-ideal.  Since $I$ is a $k$-cancellation ideal, we get $J=(IJ:I)$.

(2)$\Rightarrow$(3): 
Suppose, $IJ\subseteq IK$. Let $r\in (IJ:I)$. Then $rI\subseteq IJ\subseteq IK$. This implies $r\in (IK:I)$. So, $(IJ:I)\subseteq (IK:I)$, which further implies $J\subseteq K$.

(3)$\Rightarrow$(1):
Let $IJ=IK$. Then $IJ\subseteq IK, IK\subseteq IJ$, which implies $J\subseteq K $ and $K\subseteq J$. Therefore $J=K$.
\end{proof}
The next theorem provides a new characterization of $k$-prime ideals with respect to the natural partial order in any additively idempotent semiring.
\begin{theorem}\label{k-prime via natural partial order}
    Let $S$ be an additively idempotent semiring. Then a $k$-ideal is $k$-prime if and only if for $x$, $y \in S$ and $z \in I$, $xy \leqslant z$ implies that $x \in I$ or $y \in I$.
\end{theorem}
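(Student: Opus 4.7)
The plan is to leverage two facts already available in the excerpt: in an additively idempotent semiring, the characterisation of $k$-ideals as those ideals that are \emph{down-sets} with respect to the natural order $\leqslant$ (the proposition cited just after Definition~\ref{Def: k-congruence}), and the elementwise form of the $k$-prime condition (the same form tacitly used, for instance, in the proof of Lemma~\ref{loc16}), namely that a $k$-ideal $I$ is $k$-prime if and only if, for $a,b\in S$, $ab\in I$ implies $a\in I$ or $b\in I$.

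For the forward direction, assume $I$ is $k$-prime, and suppose $x,y\in S$ and $z\in I$ satisfy $xy\leqslant z$. Because $I$ is a $k$-ideal in an additively idempotent semiring, it is closed downward under $\leqslant$, so $xy\in I$. The elementwise $k$-prime property then forces $x\in I$ or $y\in I$.

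For the converse, assume the stated order-theoretic condition, and let $a,b\in S$ with $ab\in I$. Additive idempotency gives $ab+ab=ab$, i.e.\ $ab\leqslant ab$, so taking $z:=ab\in I$ we have $ab\leqslant z$ with $z\in I$. The hypothesis then yields $a\in I$ or $b\in I$, and since $I$ is already a $k$-ideal by assumption, this means $I$ is $k$-prime.

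No genuine obstacle is anticipated; the argument is essentially a single line in each direction. The only point requiring a small amount of care is to invoke the elementwise reformulation of $k$-primeness rather than to argue directly with pairs of $k$-ideals $A,B$; this is legitimate because in a commutative semiring the ideal-theoretic primeness of $I$ coincides with the elementwise condition, and the $k$-ideal hypothesis is preserved throughout. The proof is therefore short, and the theorem is best viewed as an order-theoretic repackaging of the usual definition, made possible precisely by the additive idempotency of $S$.
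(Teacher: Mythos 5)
Your proof is correct and follows essentially the same route as the paper: both directions reduce to the elementwise prime property of a $k$-prime ideal, using additive idempotency to get $xy\leqslant xy$ in one direction and the downward-closedness (subtractivity) of $I$ to get $xy\in I$ from $xy\leqslant z$ in the other. The paper's only cosmetic difference is that it unfolds the $k$-ideal definition ($xy+z=z$ gives $xy\in I$) instead of citing the down-set characterization, and it redundantly re-derives subtractivity in the converse; your appeal to the elementwise reformulation of $k$-primeness is exactly the step the paper itself uses tacitly.
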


\begin{proof}
    Suppose, for all $x$, $y \in S$ and $z \in I$, $xy \leqslant z$ implies $x \in I$ or $y \in I$. We show that $I$ is $k$-prime. For that, let $y \in I$, $x+y \in I$. Then $x \leqslant x+y$, which implies $x\cdot1_S \leqslant x+y$. Therefore,  by assumption, $x\in I $ or $1_S \in I $. This implies $x \in I$ as $I$ is an ideal. Now, let $xy \in I$. Then $xy \leqslant xy$ implies $x \in I$ or $y \in I$. Conversely, suppose that $I$ is $k$-prime and for $x$, $y \in S$ and $z \in I$, $xy \leqslant z$. This implies $xy + z = z$, which further implies that $xy \in I$ as $I$ is a $k$-ideal. Therefore, $x \in I$ or $y \in I$.
\end{proof}

In similar spirit, the following theorem characterizes $k$-semiprime ideals with respect to the natural partial order of an additively idempotent semiring. 

\begin{theorem}\label{k-semiprime via natural partial order}
    Let $S$ be an additively idempotent semiring. A $k$-ideal $I$ is $k$-semiprime if and only if for $z \in I$ and $x^2 \leqslant z$ implies that $ x \in I$.
\end{theorem}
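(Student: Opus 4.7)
The plan is to prove both directions using the order-theoretic characterization of $k$-ideals from \cite[Proposition 3.11]{JunRayTolliver22}, namely that for any $k$-ideal $I$ in an additively idempotent semiring, $y \in I$ whenever $y \leqslant x$ for some $x \in I$.

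The converse is immediate: if the stated condition holds, then $I$ is already a $k$-ideal by hypothesis. For any $k$-ideal $A$ with $A^2 \subseteq I$ and any $a \in A$, we have $a^2 \in A^2 \subseteq I$ and $a^2 \leqslant a^2$ trivially (additive idempotency gives $a^2 + a^2 = a^2$), so taking $z := a^2$ in the hypothesis yields $a \in I$. Hence $A \subseteq I$ and $I$ is $k$-semiprime.

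For the forward direction, I would first assume $I$ is $k$-semiprime and $x^2 \leqslant z \in I$. The reduction to $x^2 \in I$ is formal: since $x^2 + z = z \in I$ and $I$ is a $k$-ideal, $x^2 \in I$. I would then take $A := \mathcal{C}_k(\langle x \rangle)$, which contains $x$, and show $A^2 \subseteq I$; $k$-semiprimeness then forces $A \subseteq I$ and in particular $x \in I$.

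The main step is verifying $A^2 \subseteq I$, and this is where additive idempotency is decisive. First I would observe that $a \in \mathcal{C}_k(\langle x \rangle)$ is equivalent to $a \leqslant cx$ for some $c \in S$: if $a + bx = cx$, then adding $a$ to both sides and using $a+a=a$ gives $a + cx = a+a+bx = a+bx = cx$, so $a \leqslant cx$. Multiplication respects the natural partial order, so for $a \leqslant c_1 x$ and $b \leqslant c_2 x$ one obtains $ab \leqslant c_1 x b \leqslant c_1 c_2 x^2$; since $x^2 \in I$, we get $c_1 c_2 x^2 \in I$, and the order-theoretic characterization of $k$-ideals gives $ab \in I$. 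Closure of $I$ under addition then handles arbitrary finite sums from $A^2$. The main obstacle is exactly this passage from the element-wise fact $x^2 \in I$ to the ideal-level inclusion $A^2 \subseteq I$, which would fail in general semirings but is salvaged here by the order-theoretic machinery available in the additively idempotent setting.
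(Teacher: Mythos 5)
Your proof is correct, and the converse direction coincides with the paper's argument verbatim (take $a\in A$, note $a^2\in A^2\subseteq I$ and $a^2\leqslant a^2$, conclude $a\in I$). The forward direction, however, is genuinely different. The paper argues via the $k$-radical: from $x^2+z=z$ it gets $x^2\in I$, then invokes the fact that a $k$-semiprime ideal satisfies $I=\mathcal{R}_k(I)$ and that every $k$-prime ideal $P\supseteq I$ absorbs $x$ once $x^2\in P$, so $x\in\mathcal{R}_k(I)=I$. You instead work straight from the definition: you set $A:=\mathcal{C}_k(\langle x\rangle)$, observe that in the additively idempotent setting $a\in\mathcal{C}_k(\langle x\rangle)$ is equivalent to $a\leqslant cx$ for some $c\in S$ (your computation $a+cx=a+a+bx=a+bx=cx$ is exactly right), use compatibility and transitivity of the natural order to get $ab\leqslant c_1c_2x^2\in I$, and then the order-theoretic characterization of $k$-ideals plus closure under addition gives $A^2\subseteq I$, whence $A\subseteq I$ and $x\in I$. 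Your route is more self-contained: it does not rely on the externally cited identity $I=\mathcal{R}_k(I)$ for $k$-semiprime ideals, nor on the elementwise (exchange-principle) description of $k$-prime ideals, and it stays entirely within the order-theoretic machinery the section is built on; the price is that it leans on additive idempotency throughout (via the description of $\mathcal{C}_k(\langle x\rangle)$), whereas the paper's forward direction uses idempotency only to unwind $x^2\leqslant z$ and would otherwise transfer to more general semirings. Both are valid; the paper's is shorter given the radical facts, yours is the more elementary and arguably more in the spirit of Theorem~\ref{k-prime via natural partial order}.
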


 \begin{proof}
     Suppose $I$ is a $k$-semiprime ideal of $S$. Let $x \in S$ and $z \in I$ such that $x^2 \leqslant z$. Then $x^2 + z = z$. We get $x^2 \in I$ as $I$ is a $k$-ideal. Since $I$ is $k$-semiprime, we have $I=\mathcal{R}_k(I)$. Let $P$ be a $k$-prime ideal containing $I$. Then $x^2 \in P$ implies $x \in P$, which further implies $x \in \mathcal{R}_k(I) = I$. Conversely, suppose that for $x \in S$ and $z \in I$, $x^2 \leqslant z$ implies $x \in I$. Let $A$ be a $k$-ideal of $S$ such that $A^2 \subseteq I$. Let $x \in A$. Then $x^2 \in I$. Therefore, $x^2 \leqslant x^2$ implies $x \in I$, by our assumption. So, $x \in I$. We get $A \subseteq I$. Thus $I$ is $k$-semiprime.
 \end{proof}
 
The  following theorem characterizes $k$-strongly irreducible ideals in an additively idempotent UFSD with respect to the natural partial order and the $\mathrm{l.c.m}$-operation. 
 
\begin{theorem}\label{k-strongly via order and lcm}
Let $S$ be an additively idempotent unique factorisation semidomain. Then $I$ is a $k$-strongly irreducible ideal in $S$ if and only if for $x$, $y \in S$ and $z \in I$, $\mathrm{l.c.m}\{x,y\} \leqslant z$ implies $x \in I$ or $y \in I$.
\end{theorem}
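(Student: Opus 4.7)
The plan is to reduce the statement directly to Theorem~\ref{loc2}(1), which already characterizes $k$-strongly irreducible ideals in a unique factorisation semidomain via the $\mathrm{l.c.m}$-operation, namely: $I$ is $k$-strongly irreducible if and only if $\mathrm{l.c.m}\{x,y\}\in I$ implies $x\in I$ or $y\in I$. The task is then only to translate between ``$\mathrm{l.c.m}\{x,y\}\in I$'' and ``$\mathrm{l.c.m}\{x,y\}\leqslant z$ for some $z\in I$'', using the additively idempotent structure.

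For the forward implication, I would assume $I$ is $k$-strongly irreducible and let $x,y\in S$, $z\in I$ with $\mathrm{l.c.m}\{x,y\}\leqslant z$. Because $S$ is additively idempotent and $I$ is a $k$-ideal, the quoted Proposition from \cite[Proposition 3.11]{JunRayTolliver22} (stated just before Section~\ref{Section 3: Some results on k-ideals on semirings}) gives $\mathrm{l.c.m}\{x,y\}\in I$. Then Theorem~\ref{loc2}(1) immediately yields $x\in I$ or $y\in I$.

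For the reverse implication, I would verify the hypothesis of Theorem~\ref{loc2}(1). Suppose $\mathrm{l.c.m}\{x,y\}\in I$. Setting $z:=\mathrm{l.c.m}\{x,y\}$, additive idempotency gives $z+z=z$, i.e.\ $z\leqslant z$ in the natural partial order, so $\mathrm{l.c.m}\{x,y\}\leqslant z$ with $z\in I$. The hypothesis then produces $x\in I$ or $y\in I$, and Theorem~\ref{loc2}(1) concludes that $I$ is $k$-strongly irreducible.

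There is no genuine obstacle here; the only subtle point is that both directions rely on two distinct features of the additively idempotent UFSD framework working in tandem: the order-theoretic characterization of $k$-ideals (to move between $\mathrm{l.c.m}\{x,y\}\leqslant z\in I$ and $\mathrm{l.c.m}\{x,y\}\in I$), and the UFSD-specific Theorem~\ref{loc2}(1) (to convert the $\mathrm{l.c.m}$-condition into $k$-strong irreducibility). The proof will therefore be short and essentially a two-line chase through these two previously established results.
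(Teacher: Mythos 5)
Your proposal is correct and follows essentially the same route as the paper: both directions come down to translating between $\mathrm{l.c.m.}\{x,y\}\in I$ and $\mathrm{l.c.m.}\{x,y\}\leqslant z$ for some $z\in I$ via additive idempotency and the $k$-ideal (order-down-closed) property, and then invoking the UFSD characterization of $k$-strong irreducibility by least common multiples. The only cosmetic difference is that in the converse the paper re-verifies the criterion of Theorem~\ref{loc11} through $\mathcal{C}_k(\langle x\rangle)\cap\mathcal{C}_k(\langle y\rangle)\subseteq I$ instead of citing Theorem~\ref{loc2}(1) outright, which is the same content.
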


\begin{proof}
    Suppose, for all $x$, $y \in S$ and $z \in I$, $\mathrm{l.c.m}\{x,y\} \leqslant z$ implies $x \in I$ or $y \in I$. Let $\mathcal{C}_k(\langle x \rangle) \cap \mathcal{C}_k(\langle y \rangle) \subseteq I$. Then $\mathrm{l.c.m}\{x,y\} \in \langle x \rangle \cap \langle y \rangle \subseteq I$. This implies $\mathrm{l.c.m}\{x,y\} \in I$. Therefore $\mathrm{l.c.m}\{x,y\} \leqslant \mathrm{l.c.m}\{x,y\}$. So, we get $x \in I$ or $y \in I$, by our assumption. 
    Conversely, let $I$ be a $k$-strongly irreducible ideal of $S$. Suppose $x$, $y \in S$ and $z \in I$ such that $\mathrm{l.c.m}\{x,y\} \leqslant z$. Then $\mathrm{l.c.m}\{x,y\} + z = z$. We get $\mathrm{l.c.m}\{x,y\} \in I$, as $I$ is a $k$-ideal. Hence $x \in I$ or $y \in I$, $S$ being a UFSD.
\end{proof}

The rest of paper is devoted to study various congruences and associated ideals in the additively idempotent setting. Let us now recall the definition of prime congruences from  \cite[Definition 3.4]{Lescot11}.

 \begin{definition}
     Suppose $S$ is an additively idempotent semiring. A congruence $\rho$ on $S$ is said to be \emph{prime} if $(ab) \rho 0$ implies $a \rho 0$ or $b \rho 0$. 
 \end{definition}
 
 The next definition is the counterpart of the Bourne congruence associated with any ideal in an additively idempotent semiring. 
 
 \begin{definition}[Section 3, \cite{Lescot11}]\label{Def: excellent congruences and saturated ideals}
     Let $S$ be a semiring and $J$ be an ideal of $S$. Then $\rho_J$ is a congruence on $S$ defined as follows: $x\rho_Jy$ if and only if there exists $z\in J$ such that $x+z=y+z$. this $\rho_J$ is called \emph{excellent congruence} on $S$. Moreover, the ideal $J$ is called \emph{saturated} if $J=\bar{J}\coloneqq \{x\in S \mid x+z=z$, for some $z\in J\}$
 \end{definition}

 \begin{remark}
 By a $k$-prime congruence, we mean a congruence that is a $k$-congruence as well as prime.    
 \end{remark}

     In \cite[Theorem 3.7]{Lescot11}, Lescot showed that the map $J\to \bar{J}$ is a closure operator. We shall prove that $\bar{J}$ coincides with the $\mathcal{C}_k(J)$. For that, we need the following lemma. 
 
 \begin{lemma}\label{loc10}
     Let $S$ be an additively idempotent semiring and $I$ be an ideal of $S$. Then $\bar{I}$ is a $k$-ideal of $S$.
 \end{lemma}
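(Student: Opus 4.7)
The plan is to verify directly that $\bar{I}$ satisfies the ideal axioms and then the subtractive ($k$-ideal) condition, without invoking any later identification of $\bar{I}$ with $\mathcal{C}_k(I)$ (which would be circular, since that identification is what the next theorem is setting up). The argument is a bookkeeping exercise in choosing the right witnesses from $I$.

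First I would check that $\bar{I}$ is an ideal of $S$. Since $0 + z = z$ for any fixed $z \in I$, we have $0 \in \bar{I}$. For closure under addition, if $x_1, x_2 \in \bar{I}$ with $x_i + z_i = z_i$ for some $z_i \in I$, then taking $z := z_1 + z_2 \in I$ gives
\[
(x_1 + x_2) + z \;=\; (x_1 + z_1) + (x_2 + z_2) \;=\; z_1 + z_2 \;=\; z,
\]
so $x_1 + x_2 \in \bar{I}$. For absorption, multiplying the relation $x + z = z$ by any $s \in S$ yields $sx + sz = sz$ with $sz \in I$. I would also note in passing that $I \subseteq \bar{I}$ by taking $z = x$ and using $x + x = x$ (this is where the additive idempotence of $S$ enters).

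The key step is the subtractive condition. Suppose $x + y \in \bar{I}$ and $y \in \bar{I}$, with witnesses $z_1, z_2 \in I$ satisfying
\[
(x+y) + z_1 \;=\; z_1 \qquad \text{and} \qquad y + z_2 \;=\; z_2.
\]
Adding $z_2$ to the first relation gives $x + y + z_1 + z_2 = z_1 + z_2$, while adding $z_1$ to the second gives $y + z_1 + z_2 = z_1 + z_2$. Substituting the latter into the former produces $x + (z_1 + z_2) = z_1 + z_2$. Since $z_1 + z_2 \in I$, this exhibits $x \in \bar{I}$, and hence $\bar{I}$ is a $k$-ideal.

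I do not anticipate a serious obstacle here; the proof is essentially a witness-combination argument. The only delicate point is the third paragraph, where one has to resist the temptation to manipulate $x + (z_1 + z_2)$ directly and instead route the argument through $x + y + z_1 + z_2$ so that both hypotheses can be applied in sequence. Once that observation is made, the verification is automatic.
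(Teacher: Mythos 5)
Your proof is correct and follows essentially the same witness-combination argument as the paper's proof of the subtractive condition (the paper combines the two witness equations $x+z_1=z_1$ and $x+y+z_2=z_2$ in a nearly identical way, using additive idempotence to collapse a repeated term, whereas your routing avoids even that). The extra verification that $\bar{I}$ is an ideal is a harmless addition; the paper leaves it implicit, relying on Lescot's observation that $J\mapsto\bar{J}$ is a closure operator.
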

 
 \begin{proof}
     Let $x$, $x+y\in \bar{I}$. Then $x+z_1=z_1$ and $x+y+z_2=z_2$, for some $z_1$, $z_2\in I$. This implies $x+z_1+x+y+z_2=x+z_1+z_2$, which further implies $y+x+z_1+z_2=x+z_1+z_2$. Therefore $y+z_1+z_2=z_1+z_2$. So $y\in \bar{I}$. 
 \end{proof}
 
 \begin{theorem}\label{saturated iff k-ideal}
     Let $S$ be an additively idempotent semiring. An ideal $I$ is saturated if and only if it is a $k$-ideal.
 \end{theorem}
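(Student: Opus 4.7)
The plan is to prove the two inclusions $I \subseteq \bar{I}$ and $\bar{I} \subseteq I$ separately, with the key feature being that additive idempotency trivializes one inclusion, while the defining property of a $k$-ideal trivializes the other. Lemma~\ref{loc10} already does most of the work for the forward direction.

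For the implication ``saturated $\Rightarrow$ $k$-ideal,'' I would simply invoke Lemma~\ref{loc10}: since $I = \bar{I}$ and $\bar{I}$ is always a $k$-ideal in an additively idempotent semiring, $I$ is a $k$-ideal. This is a one-line argument.

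For the converse, suppose $I$ is a $k$-ideal. I would first observe that $I \subseteq \bar{I}$ holds because $S$ is additively idempotent: for any $x \in I$, we have $x + x = x$ with $x \in I$, which exhibits $x$ as an element of $\bar{I}$. For the reverse inclusion $\bar{I} \subseteq I$, let $x \in \bar{I}$, so that $x + z = z$ for some $z \in I$. Then both $x + z$ and $z$ lie in $I$, and the defining property of a $k$-ideal (namely, $x + y \in I$ and $y \in I$ forces $x \in I$) immediately yields $x \in I$. Hence $I = \bar{I}$, and $I$ is saturated.

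There is no substantial obstacle here; the argument is essentially a bookkeeping exercise that rests on Lemma~\ref{loc10} (already proved) and on the additive idempotency hypothesis, which collapses the witness condition $x + z = z$ to exactly the $k$-ideal condition once $z$ is chosen to be $x$ itself in the easy inclusion. The only subtlety worth flagging is that the forward direction crucially uses additive idempotency via Lemma~\ref{loc10}, so the equivalence genuinely belongs to the $B_1$-algebra setting and should not be expected in arbitrary semirings.
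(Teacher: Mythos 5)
Your argument is correct and follows essentially the same route as the paper's own proof: Lemma~\ref{loc10} for the forward direction, and for the converse the inclusion $I\subseteq\bar{I}$ via $x+x=x$ together with $\bar{I}\subseteq I$ from the $k$-ideal property applied to $x+z=z$ with $z\in I$. Your write-up is merely slightly more explicit at the final step, where the paper condenses the $k$-ideal argument to ``this implies $x\in I$.''
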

 
 \begin{proof}
     If $I$ is saturated, then $I=\bar{I}$. By Lemma~\ref{loc10}, $I$ is a $k$-ideal. Conversely, suppose $I$ is a $k$-ideal. As for $x\in I$, $x+x=x$, we have $x\in \bar{I}$. Therefore $I\subseteq \bar{I}$. Let $x\in \bar{I}$. Then $x+z=z$, for some $z\in I$. This implies $x\in I$. Thus $I=\bar{I}$. 
 \end{proof}
The correspondence between $k$-ideals and $k$-congruences extends to some of their distinguished subclasses. In particular, we show that in an additively idempotent semiring there is a one-to-one correspondence between $k$-maximal congruences and $k$-maximal ideals. We begin by recalling the definition of a $k$-maximal congruence.
 \begin{definition}
     Let $S$ be a semiring. A $k$-congruence $\theta$ is said  to be \emph{$k$-maximal} if $\theta\subseteq \theta'$ implies either $\theta=\theta'$ or $\theta'=S\times S$, for any $k$-congruence $\theta'$ on $S$.
 \end{definition}
 The following theorem ensures the inclusion of the class of $k$-maximal congruences into the class of $k$-prime congruences, for any semiring $S$. 
  \begin{theorem}
    Let $S$ be a semiring. Then every $k$-maximal congruence is a $k$-prime congruence.
 \end{theorem}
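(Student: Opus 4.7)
My plan is to pull the statement back to the ideal side via the correspondence between $k$-ideals and $k$-congruences recalled in the preliminaries (\cite[Theorem 3.8]{Han21}), and then invoke the Lemma~\ref{loc16} that every $k$-maximal ideal is $k$-prime.

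Concretely, let $\theta$ be a $k$-maximal $k$-congruence on $S$. By definition of $k$-congruence, $\theta = K_I$ for some $k$-ideal $I$ of $S$, and because $I$ is a $k$-ideal the zero class is exactly $[0]_\theta = \mathcal{C}_k(I) = I$. First I would verify that $I$ is itself $k$-maximal: if $I \subsetneq J$ with $J$ a proper $k$-ideal of $S$, then the induced Bourne congruences satisfy $K_I \subseteq K_J$, with $K_J \neq S \times S$ (since $1 \notin J$ means $1$ is not $K_J$-related to $0$), and the inclusion is strict because if $K_I = K_J$ then their zero classes would coincide, giving $I = J$. This contradicts the $k$-maximality of $\theta$, so $I$ is $k$-maximal.

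Next, by Lemma~\ref{loc16}, $I$ is $k$-prime, and in particular the underlying ideal satisfies the element-wise primality condition used in the proof of that lemma: $ab \in I$ forces $a \in I$ or $b \in I$. To finish, suppose $(ab)\,\theta\,0$. Translating via $[0]_\theta = I$ gives $ab \in I$, hence $a \in I$ or $b \in I$, which in congruence language reads $a\,\theta\,0$ or $b\,\theta\,0$. This is exactly the defining property of a $k$-prime congruence, so $\theta$ is $k$-prime.

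The only mildly delicate point is the step verifying the correspondence between $k$-maximality on the two sides; once one confirms both that $[0]_{K_I} = I$ for $k$-ideals (so the zero class recovers the ideal uniquely) and that the bijection $I \mapsto K_I$ is inclusion-preserving and sends $S$ to $S \times S$, the rest is an immediate transport along the bijection followed by Lemma~\ref{loc16}. No additional hypothesis on $S$ (such as additive idempotence) is actually required, since the whole argument rests only on the Han correspondence and on the element-wise consequence of $k$-primality established in Lemma~\ref{loc16}.
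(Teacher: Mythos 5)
Your argument is correct, but it takes a genuinely different route from the paper's. The paper works entirely at the congruence level: assuming $(uv,0)\in K_A$ and $(u,0)\notin K_A$, it forms $B=\langle u\rangle+A$, notes $K_A\subseteq K_B$ with $(u,0)\in K_B\setminus K_A$, uses $k$-maximality of $K_A$ to force $K_B=S\times S$, and then extracts from $0\,K_B\,1$ an explicit relation which, after multiplying by $v$, gives $v\,K_A\,0$ --- in effect it repeats the computation in the proof of Lemma~\ref{loc16} in congruence language and never needs to know that the zero class is a $k$-maximal ideal. You instead transport the problem across the $k$-ideal/$k$-congruence correspondence: you verify that the zero class $I=[0]_\theta=\mathcal{C}_k(I)$ of a $k$-maximal $k$-congruence is a $k$-maximal $k$-ideal (via $[0]_{K_J}=\mathcal{C}_k(J)$, monotonicity of $J\mapsto K_J$, and $K_J=S\times S$ iff $1\in\mathcal{C}_k(J)$), then invoke Lemma~\ref{loc16} as a black box and translate the element-wise primality back to $\theta$. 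Both proofs are valid; yours is more modular and, as a by-product, establishes for arbitrary semirings the implication that a $k$-maximal congruence $K_I$ has $k$-maximal zero-class ideal $I$, a direction the paper only records later (inside an equivalence) under the extra hypothesis of additive idempotence, whereas the paper's computation is self-contained and avoids the bookkeeping your reduction requires. If you write yours up, do make explicit the two small points you gesture at: that one may take $\theta=K_I$ with $I$ a $k$-ideal because $K_A=K_{\mathcal{C}_k(A)}$ for any ideal $A$, and that a $k$-maximal congruence is proper (so $I\neq S$), which is what makes $I$ eligible to be $k$-maximal and the degenerate case $\theta=S\times S$ vacuous.
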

 
 \begin{proof}
     Let $\rho=K_A$ be a $k$-maximal congruence for some ideal $A$ of $S$. Let $(uv,0)\in K_A$ but $(u,0)\notin K_A$. Then $u\notin A$, as $u+0=0+u$ will imply $uK_A0$. Consider the ideal $B=\langle u \rangle +A$. If $xK_Ay$, then we have $x+z_1=x+z_2$, for some $z_1$, $z_2\in A$, which implies $xK_By$. Therefore $K_A\subseteq K_B$. By maximality of $K_A$, $K_B=S\times S$. In particular, $0K_B1$. Then $0+us_1+a=1+us_2+b$, for some $s_1$, $s_2\in S$ and $a$, $b\in A$. So, $us_1 K_A(1+us_2)$. Now \[(uv)s_1=v(us_1)K_Av(1+us_2)=v+uvs_2.\] Since $uvK_A0$ we have \[0=0s_1K_A(uv)s_1K_Av+(uv)s_2K_Av+0\cdot s_2=v.\] Therefore, $vK_A0$.
 \end{proof}

 Han \cite[Theorem 5.4]{Han21} established a correspondence between $k$-prime ideals and $k$-prime congruences, and used this correspondence to show that the resulting spectra are homeomorphic. In the following theorem, we present a purely algebraic proof of the correspondence between $k$-maximal ideals and $k$-maximal congruences. 
 \begin{theorem}
 	
     Let $S$ be an additively idempotent semiring and $I$ be a $k$-ideal of $S$. Then $K_I$ is a $k$-maximal congruence if and only if $I$ is a $k$-maximal ideal.
 \end{theorem}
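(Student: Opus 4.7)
The plan is to reduce the theorem to the bijective correspondence between $k$-ideals and $k$-congruences, already recorded in the preliminaries via \cite[Theorem~3.8]{Han21}. The core technical observation I will verify first is a monotonicity statement: for two $k$-ideals $I$ and $J$ of $S$, one has $K_I \subseteq K_J$ if and only if $I \subseteq J$. The forward implication uses that $x \in I$ gives $xK_I 0$, hence $xK_J 0$, meaning $x + j_1 = j_2$ for some $j_1, j_2 \in J$; since $J$ is a $k$-ideal, this forces $x \in J$. The reverse implication is immediate: if $x K_I y$, then $x + i_1 = y + i_2$ with $i_1, i_2 \in I \subseteq J$, so $x K_J y$.

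Next I would record the boundary case: $K_J = S\times S$ if and only if $J = S$. Indeed, if $K_J = S\times S$, then $1 K_J 0$ yields $1 + j_1 = j_2$ for some $j_1, j_2 \in J$, and the $k$-ideal property of $J$ forces $1 \in J$, so $J = S$. The converse is trivial.

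With these two facts in hand, the equivalence is a formal translation. Assume $K_I$ is $k$-maximal. If $I \subseteq J$ for some $k$-ideal $J$ of $S$, then $K_I \subseteq K_J$ by monotonicity, so either $K_J = K_I$ or $K_J = S\times S$; the correspondence yields $J = I$ or $J = S$, proving $I$ is $k$-maximal. Conversely, assume $I$ is $k$-maximal, and let $\theta$ be a $k$-congruence with $K_I \subseteq \theta$. Writing $\theta = K_J$ for the unique $k$-ideal $J$ provided by the correspondence, monotonicity gives $I \subseteq J$, so $J = I$ or $J = S$, whence $\theta = K_I$ or $\theta = S\times S$.

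The only subtle point, which I would expect to be the main (though mild) obstacle, is the monotonicity direction $K_I \subseteq K_J \Rightarrow I \subseteq J$, where the hypothesis that $J$ is a $k$-ideal is genuinely needed to absorb the auxiliary element $j_1$. Everything else is a transcription through the Han correspondence, and additive idempotence is used only insofar as it provides the ambient framework of the subsection; the argument itself is purely ideal-theoretic.
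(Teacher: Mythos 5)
Your proof is correct and follows essentially the same route as the paper's: the published argument is exactly your monotonicity-plus-boundary-case computation carried out inline, with the zero-class $I_\rho=\{y\in S\mid y\,\rho\,0\}$ (which is the $k$-closure of the underlying ideal) playing the role of the $k$-ideal you extract from the correspondence of \cite[Theorem 3.8]{Han21}. The only cosmetic difference is that you cite that bijection where the paper verifies the needed instance by hand, and your remark that additive idempotence is not genuinely used matches the paper's proof as well, since its appeal to $x+x=x$ can be replaced by $x+0=0+x$.
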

 
 \begin{proof}
     Suppose $I$ is a $k$-maximal ideal. Let $K_I\subseteq \rho$, for some $k$-congruence $\rho$ on $S$. Let $x\in I$. Then $x+x=x$. This implies $xK_I0$, which further implies $x\rho0$. So $x\in I_\rho :=\{y\in S\mid y\rho0\}$. As $I$ is $k$-maximal either $I=I_\rho$ or $I_\rho=S$. Let $I=I_\rho$. Now as $\rho$ is a $k$-congruence $\rho=K_J$, for some ideal $J$ of $S$. Then $x\rho y$ implies $x+z_1=y+z_2$, for some $z_1$, $z_2\in J$. Since $z_1$, $z_2\in J$, we have $z_1\rho0$ and $z_2\rho0$. So, $z_1$, $z_2\in I_\rho=I$. Therefore $xK_Iy$, \textit{i.e.}, $\rho=K_I$. If $I_\rho=S$, we have for all $x\in S$, $x\rho0$. This implies $x\rho y$, for all $x$, $y\in S$. Thus $\rho=S\times S$.
Conversely, suppose $K_I$ is a $k$-maximal congruence and $I\subseteq J$, where $J$ is a $k$-ideal of $S$. Then $K_I\subseteq K_J$. This implies $K_I=K_J$ or $K_J=S\times S$. Let $K_I=K_J$. For $x\in J$, $x+x=x$ implies $xK_J0$, which further implies $xK_I0$. Then $x+z_1=z_2$, for some $z_1$, $z_2\in I$. So $x\in I$, that is, $I=J$. If $K_J=S\times S$ for all $x\in S$, $(x,0)\in K_J$. This implies $x+a=b$, for some $a$, $b\in J$. Therefore $x\in J$. Hence $J=S$.
 \end{proof}
 The next theorem concludes that the class of $k$-prime congruences is contained in the class of $k$-irreducible congruences in an additively idempotent semiring $S$. 
 
 \begin{theorem}
    Let $S$ be an additively idempotent semiring. Every $k$-prime congruence is $k$-irreducible.
 \end{theorem}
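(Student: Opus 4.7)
The plan is to transfer the statement from congruences to ideals via Han's bijection \cite[Theorem~3.8]{Han21} between $k$-ideals and $k$-congruences, and then to exploit the elementary fact that every $k$-prime ideal is $k$-strongly irreducible.

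Let $\rho$ be a $k$-prime congruence on $S$. Since $\rho$ is a $k$-congruence, write $\rho = K_I$ for the unique $k$-ideal $I$ of $S$; from the Bourne description one checks immediately that $[0]_{K_I} = I$, because $x\,K_I\,0$ means $x + i_1 = i_2$ for some $i_1,i_2 \in I$, and the $k$-property of $I$ then forces $x \in I$. Consequently the primeness hypothesis on $\rho$ translates into $ab \in I \Longrightarrow a \in I$ or $b \in I$, the elementwise form of $I$ being $k$-prime; its equivalence with the ideal-theoretic definition used in this paper follows at once, since if $AB \subseteq I$ with $A \not\subseteq I$, any $a \in A \setminus I$ forces every $b \in B$ into $I$. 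The auxiliary remark we need is that every such $k$-prime ideal is $k$-strongly irreducible: if $A \cap B \subseteq I$ for $k$-ideals $A,B$, then $AB \subseteq A \cap B \subseteq I$, and $k$-primeness yields $A \subseteq I$ or $B \subseteq I$.

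Now suppose $\rho_1 \cap \rho_2 = \rho$ for $k$-congruences $\rho_j = K_{I_j}$. Comparing zero classes on both sides of this equality gives
\[
I_1 \cap I_2 \;=\; [0]_{\rho_1} \cap [0]_{\rho_2} \;=\; [0]_{\rho} \;=\; I.
\]
Applying the $k$-strong irreducibility of $I$ to the inclusion $I_1 \cap I_2 \subseteq I$ forces $I_1 \subseteq I$ or $I_2 \subseteq I$, while the reverse inclusions $I \subseteq I_1$ and $I \subseteq I_2$ are automatic from $I = I_1 \cap I_2$. Hence $I_1 = I$ or $I_2 = I$, and invoking the bijection once more delivers $\rho_1 = \rho$ or $\rho_2 = \rho$, which is precisely the $k$-irreducibility of $\rho$.

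There is no serious obstacle here. The only delicate point is the compatibility of intersections, namely the identity $[0]_{K_J} = J$ for every $k$-ideal $J$, which is what allows one to pull an equality of $k$-congruences back to an equality of the associated $k$-ideals and thereby reduce the congruence-theoretic claim to the already established ideal-theoretic implication.
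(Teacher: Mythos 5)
Your argument is correct, and at bottom it follows the same strategy as the paper's proof: reduce the congruence-level statement to the ideal level by looking at zero classes (equivalently, saturations), apply primeness there, and transfer back. The difference lies in the bookkeeping. You normalize each $k$-congruence to the Bourne congruence of a $k$-ideal via the correspondence of \cite[Theorem~3.8]{Han21} together with your (correctly verified) identity $[0]_{K_I}=I$ for $k$-ideals $I$, so that the ideal-level conclusion $I_1=I$ or $I_2=I$ transfers back to $\rho_1=\rho$ or $\rho_2=\rho$ instantly, and you invoke the known implication ``$k$-prime $\Rightarrow$ $k$-strongly irreducible'' as the ideal-level engine. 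The paper instead keeps arbitrary ideal representatives $K_A\cap K_B=K_J$, proves $\bar{A}\cap\bar{B}=\bar{J}$ directly, runs the elementwise prime argument on $\bar{J}$, and then must still check by hand that $\bar{A}=\bar{J}$ forces $K_A=K_J$ --- precisely the piece of the $k$-ideal/$k$-congruence correspondence that your appeal to Han's bijection absorbs, and a legitimate appeal since the paper records that correspondence in its preliminaries. Both routes are sound; yours is a bit more modular, while the paper's is self-contained in that it re-derives the needed fragment of the correspondence rather than citing it.
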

 
 \begin{proof}
     Let $\rho=K_J$ be a $k$-prime congruence for some ideal $J$ of $S$. Let $K_A\cap K_B=K_J$, for some $k$-congruences $K_A$ and $K_B$, where $A$ and $B$ are ideals of $S$. We claim that $\bar{A}\cap\bar{B}=\bar{J}$. Let $x\in \bar{A}\cap \bar{B}$. Since saturated closure and $k$-closure coincide, $\bar{A}\cap \bar{B}=\overline{A\cap B}$ (\textit{cf.} Theorem \ref{saturated iff k-ideal}). This implies $x+z=z$, for some $z\in A\cap B$. So $xK_A\cap K_B0$. Then $xK_J0$, that is, $x\in \bar{J}$. Again, if $y\in \bar{J}$, we have $yK_A\cap K_B0$, which implies $y\in \bar{A}\cap \bar{B}$. Therefore $ab\in \bar{J}$ implies $abK_J0$, where $a\in \bar{A}$, $b\in \bar{B}$. $K_J$ being $k$-prime, $aK_J0$ or $bK_J0$. Thus $\bar{A}=\bar{J}$ or $\bar{B}=\bar{J}$. Let $\bar{A}=\bar{J}$. Now, since for $x\in A$, $x+x=x$ holds, we have $xK_A0$. Therefore, $x\in \bar{A}$. So $A\subseteq \bar{A}=\bar{J}$. Let $xK_Ay$. Then $x+z_1=y+z_2$, where $z_1$, $z_2\in A\subseteq\bar{J}$. This implies $z_1+a=a,\ z_2+b=b$, for some $a$, $b\in J$. Whence \[x+z_1+a+b=y+z_2+a+b,\] that is, $x+a+b=y+a+b$. This gives $xK_Jy$. Therefore $K_A=K_J$.
 \end{proof}

 The accompanying result establishes that, in an additively idempotent semiring, the congruence associated with any $k$-irreducible ideal is itself $k$-irreducible. The validity of the converse implication, however, remains an open problem.
 \begin{theorem}\label{k-irreducible ideal to k-irreducible congruence}
     
     Let $S$ be an additively idempotent semiring. If $I$ is a $k$-irreducible ideal, $K_I$ is a $k$-irreducible congruence.
 \end{theorem}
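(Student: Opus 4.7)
The plan is to exploit the bijective correspondence between $k$-ideals and $k$-congruences (\cite[Theorem~3.8]{Han21}), together with the additive idempotence of $S$, to transfer irreducibility from the ideal level to the congruence level. Assume $K_I=\rho_1\cap\rho_2$ for some $k$-congruences $\rho_1,\rho_2$ on $S$. Since each $\rho_i$ is a $k$-congruence, there exists a (unique) $k$-ideal $J_i$ of $S$ such that $\rho_i=K_{J_i}$. The aim is to establish that $I=J_1\cap J_2$; once this is verified, the $k$-irreducibility of $I$ gives $I=J_1$ or $I=J_2$, and the correspondence then translates this back to $K_I=\rho_1$ or $K_I=\rho_2$.

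The verification of $I=J_1\cap J_2$ proceeds by mutual inclusion, both halves being short calculations trading on additive idempotence. For $I\subseteq J_1\cap J_2$, take $x\in I$; then $x+x=x$ witnesses $x\,K_I\,0$, hence $x\,\rho_i\,0$ for $i=1,2$. Unwinding $\rho_i=K_{J_i}$ yields $x+z_i=z_i'$ for some $z_i,z_i'\in J_i$, and since $J_i$ is a $k$-ideal we conclude $x\in J_i$. Conversely, if $x\in J_1\cap J_2$, then the identity $x+x=x$ already witnesses $x\,K_{J_i}\,0$ for each $i$, so $x\,\rho_i\,0$ for each $i$ and therefore $(x,0)\in\rho_1\cap\rho_2=K_I$. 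As $I$ is a $k$-ideal, unwinding once more forces $x\in I$.

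I do not expect any genuine obstacle, as the argument essentially mirrors the opening portion of the proof of the preceding theorem on $k$-prime congruences, and all the required closure identities (in particular $\overline{J}=\mathcal{C}_k(J)$ in the additively idempotent setting, Theorem~\ref{saturated iff k-ideal} and Lemma~\ref{loc10}) have already been established. The only delicate point is the canonical extraction of the representing $k$-ideals $J_1,J_2$ from the $k$-congruences $\rho_1,\rho_2$; this is immediate via saturated closure, so that one may without loss of generality replace any ideal $J$ with its $k$-closure $\mathcal{C}_k(J)=\bar{J}$, leaving the induced congruence $K_J$ unchanged.
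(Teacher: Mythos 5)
Your proposal is correct and follows essentially the same route as the paper: pass from the $k$-congruences to representing ideals, replace them by their $k$-closures (the paper does this inline by showing $\mathcal{C}_k(A_1)\cap\mathcal{C}_k(A_2)=I$, you do it up front via the correspondence and the observation $K_J=K_{\bar J}$), apply $k$-irreducibility of $I$, and transfer back to the congruences. The only step you label as immediate, namely that saturating $J$ leaves $K_J$ unchanged, is indeed a one-line computation in the additively idempotent setting, so there is no gap.
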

 
 \begin{proof}
     $I$ is a $k$-irreducible ideal. Let $A_1$, $A_2$ be two $k$-congruences such that $K_{A_1}\cap K_{A_2}=K_I$. We claim that $\mathcal{C}_K(A_1)\cap \mathcal{C}_K(A_2)=I$. Let $x\in \mathcal{C}_K(A_1)\cap \mathcal{C}_K(A_2)$. Then, there exist $z_1$, $z_2\in A_1$ and $y_1$, $y_2\in A_2$ such that $x+z_1=z_2$ and $x+y_1=x+y_2$. So $x\in K_{A_1}\cap K_{A_2}=K_I$. Therefore, $x\in I$ as $I$ is a $k$-ideal. Again, for $a\in I$, $(a,0)\in K_I=K_{A_1}\cap K_{A_2}$. Then $a+a_1\in A_1$ and $a+a_2\in A_2$, for some $a_1\in A_1$ and $a_2\in A_2$. It follows that $a\in \mathcal{C}_K(A_1)\cap \mathcal{C}_K(A_2)$. Thus $\mathcal{C}_K(A_1)\cap \mathcal{C}_K(A_2)=I$. So $\mathcal{C}_K(A_1)=I$ or $\mathcal{C}_K(A_2)=I$. Suppose $\mathcal{C}_K(A_1)=I$ and let $xK_{A_1}y$, which implies $x+c_1=x+c_2$, for some $c_1$, $c_2\in A_1$. Since $A_1\subseteq I$, $c_1$, $c_2\in I$, we have $(x,y)\in K_I$. Hence $K_{A_1}=K_I$.
 \end{proof}
 
 

 The following theorem relaxes Theorem~\ref{k-irreducible ideal to k-irreducible congruence} by replacing $k$-irreducibility with the weaker notion of excellent irreducibility, arising from ideals whose saturated closures are $k$-irreducible.
 \begin{theorem}
    Let $S$ be an additively idempotent semiring and let $\bar{J}$ be a $k$-irreducible ideal of $S$. then $\rho_J$ is an excellent-irreducible congruence on $S$.
 \end{theorem}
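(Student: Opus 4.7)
The plan is to reduce the excellent-irreducibility of $\rho_J$ to the $k$-irreducibility of $\bar{J}$, exploiting the tight correspondence between excellent congruences and their saturated zero-classes in the additively idempotent setting. The strategy mirrors the proof of Theorem~\ref{k-irreducible ideal to k-irreducible congruence}, but with the Bourne congruence replaced by the excellent congruence and the $k$-closure replaced by the saturated closure.

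First I would record two preparatory observations. For any ideal $A$ of $S$, an element $x$ lies in $\bar{A}$ if and only if $x\,\rho_A\,0$: indeed $x\,\rho_A\,0$ means $x+z=0+z=z$ for some $z\in A$, which is precisely the defining condition of $\bar{A}$. Second, the excellent congruence attached to an ideal depends only on its saturated closure, i.e., $\rho_A=\rho_{\bar{A}}$. The inclusion $\rho_A\subseteq\rho_{\bar{A}}$ is immediate from $A\subseteq\bar{A}$; conversely, if $x+\bar{z}=y+\bar{z}$ with $\bar{z}\in\bar{A}$, pick $z_0\in A$ with $\bar{z}+z_0=z_0$ and add $z_0$ to both sides to obtain $x+z_0=y+z_0$, hence $x\,\rho_A\,y$. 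With these in hand, assume $\rho_J=\rho_A\cap\rho_B$ for excellent congruences $\rho_A,\rho_B$. Combining the first observation with the definition of intersection yields
\[
\bar{A}\cap\bar{B}=\{x\in S\mid x\,\rho_A\,0\ \text{and}\ x\,\rho_B\,0\}=\{x\in S\mid x\,(\rho_A\cap\rho_B)\,0\}=\bar{J}.
\]

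By Lemma~\ref{loc10}, both $\bar{A}$ and $\bar{B}$ are $k$-ideals of $S$, and $\bar{J}$ is $k$-irreducible by hypothesis; therefore $\bar{A}=\bar{J}$ or $\bar{B}=\bar{J}$. Applying the second observation promotes this set-level equality to the congruence level: $\rho_A=\rho_{\bar{A}}=\rho_{\bar{J}}=\rho_J$, or symmetrically $\rho_B=\rho_J$. This establishes that $\rho_J$ is irreducible within the lattice of excellent congruences, i.e., excellent-irreducible. I do not foresee any serious obstacle here; the only delicate point is the upgrade from $\bar{A}=\bar{J}$ to $\rho_A=\rho_J$, which is handled by the second preparatory observation and does rely essentially on additive idempotency (through Theorem~\ref{saturated iff k-ideal} and Lemma~\ref{loc10}).
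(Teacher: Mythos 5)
Your proof is correct and follows essentially the same route as the paper: identify the zero-classes of the excellent congruences with the saturations $\bar{A},\bar{B}$, show $\bar{A}\cap\bar{B}=\bar{J}$, invoke Lemma~\ref{loc10} and the $k$-irreducibility of $\bar{J}$, and then lift the equality of ideals back to equality of congruences. Your preparatory lemma $\rho_A=\rho_{\bar{A}}$ is just a cleaner packaging of the absorption argument ($a\in\bar{J}$ absorbed by some $z\in J$) that the paper carries out inline, so the two arguments coincide in substance.
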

 
 \begin{proof}
     Let $\rho_{J_1}\cap \rho_{J_2}=\rho_J$, where $\rho_{J_1}, \rho_{J_2}$ are two excellent congruences on $S$. Define $I_{J_i}:=\{x\in S \mid x\rho_{J_1}0\}$, for $i=1$, $2$. Both $I_{J_1}, I_{J_2}$ are $k$-ideals of $S$. Let $x\in I_{J_1}\cap I_{J_2}$. Then $x\rho_{J_1}\cap \rho_{J_2}0$. This implies $x\rho_J0$. Therefore $x\in \bar{J}$. For the reverse inclusion, let $y\in \bar{J}$. Then there exists $z\in J$ such that $y+z=z$, which implies $y\rho_J0$, that is, $y\rho_{J_1}\cap \rho_{J_2}0$. Thus $y\in I_{J_1}\cap I_{J_2}$. Now, as $\bar{J}$ is $k$-irreducible, $I_{J_1}=\bar{J}$ or $I_{J_2}=\bar{J}$. Suppose $I_{J_1}=\bar{J}$. Let $x\in J_1$. Then $x+x=x$ implies $x\rho_{J_1}0$. Therefore $x\in I_{J_1}=\bar{J}$. If $x\rho_{J_1}y$, then there exists $a\in J_1$ such that $x+a=y+a$. Now $a\in \bar{J}$ implies $a+z=z$, for some $z\in J$. So, $x+a+z=y+a+z$, \textit{i.e.}, $x+z=y+z$, whence $x\rho_Jy$. Hence $\rho_{J_1}=\rho_J$.
 \end{proof}

Next, we recall from  \cite[Definition 3.10]{Lescot11}, a proper subclass of prime ideals and define its subtractive counterpart. 

 \begin{definition}
 An ideal $I$ of a semiring $S$ is said to be \emph{absolutely prime} if $I\neq S$ and $(ab)\rho_I(ac)$ implies $a\in \bar{I}$ or $b\rho_Ic$.
    An ideal $I$ is said to be \emph{absolutely $k$-prime} if it is a $k$-ideal and absolutely prime.   
 \end{definition}
 
 \begin{theorem}
     Let $S$ be an additively idempotent semiring. Then every absolutely $k$-prime ideal is $k$-prime.
 \end{theorem}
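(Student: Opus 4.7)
The plan is to reduce $k$-primeness to the element-wise condition (exchange principle) and then exploit the specific form of the Bourne-type excellent congruence $\rho_I$ in the additively idempotent setting. Recall that for a $k$-ideal $I$, by Theorem~\ref{saturated iff k-ideal} we have $\bar I = I$; and by the exchange principle alluded to in the introduction, a $k$-ideal $I$ is $k$-prime as soon as $ab\in I$ implies $a\in I$ or $b\in I$. So it suffices to verify this element-wise prime condition for any absolutely $k$-prime ideal.

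First I would fix $a, b \in S$ with $ab \in I$ and observe the crucial identity obtained from additive idempotence: $ab + ab = ab$. By the definition of $\rho_I$, this means $(ab)\,\rho_I\,0$, with witness $z = ab \in I$. Since $a\cdot 0 = 0$ by the absorbing property of $0$, this can be rewritten as $(ab)\,\rho_I\,(a\cdot 0)$.

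Now I would invoke the hypothesis that $I$ is absolutely $k$-prime. Absolute primeness applied to the pair $(ab, a\cdot 0)$ yields two cases: either $a \in \bar I$, or $b\,\rho_I\,0$. In the first case, since $I$ is a $k$-ideal, Theorem~\ref{saturated iff k-ideal} gives $\bar I = I$, so $a \in I$. In the second case, $b + z = z$ for some $z \in I$, which by definition places $b$ in $\bar I = I$. In both cases we obtain $a \in I$ or $b \in I$, finishing the argument.

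The only conceptual subtlety, and what I would flag as the main point to verify carefully, is the bridge step $ab \in I \Rightarrow (ab)\,\rho_I\,(a\cdot 0)$: it relies jointly on additive idempotence (to produce the witness $z = ab$ satisfying $ab + z = z$) and on the absorbing property of $0$ (to rewrite $0$ as $a\cdot 0$). These two ingredients are precisely what makes the conclusion fail for general semirings and hold in the $B_1$-algebra setting under consideration. Beyond this, the argument is a direct application of the definitions together with Theorem~\ref{saturated iff k-ideal}.
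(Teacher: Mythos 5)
Your argument is correct and follows essentially the same route as the paper: both deduce $(ab)\,\rho_I\,0=(a\cdot 0)$ from additive idempotence with witness $ab\in I$, apply absolute $k$-primeness to get $a\in\bar I=I$ or $b\,\rho_I\,0$, and conclude $b\in I$ in the latter case using that $I$ is saturated (Theorem~\ref{saturated iff k-ideal}). Your explicit appeal to the exchange principle to pass from the element-wise prime condition to $k$-primeness is a minor presentational addition, not a different method.
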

 
 \begin{proof}
     Let $I$ be an absolutely $k$-prime ideal and $ab\in I$. Then $ab+ab=ab$ implies $(ab)\rho_I0$, that is, $(ab)\rho_I(a\cdot 0)$, which further implies $b\rho_I0$ or $a\in \bar{I}=I$. If $b\rho_I0$ we have $b+z=z$, for some $z\in I$, whence $b\in I$. Hence $I$ is $k$-prime.
 \end{proof}
 
 \begin{theorem}
     Let $S$ be an additively idempotent semiring. Then every $k$-maximal ideal is absolutely $k$-prime.
 \end{theorem}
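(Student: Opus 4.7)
The plan is to verify absolute $k$-primeness for a $k$-maximal ideal $M$. Note $M$ is a $k$-ideal, so by Theorem \ref{saturated iff k-ideal} we have $\bar{M}=M$, and $M\neq S$ by maximality. The substantive content is thus to show: if $(ab)\rho_M(ac)$ and $a\notin M$, then $b\rho_M c$. I would first dispose of the degenerate cases: if $b\in M$, then $ab\in M$, and the hypothesis $ab+z_0=ac+z_0$ (for some $z_0\in M$) together with the $k$-ideal property of $M$ forces $ac\in M$; the primeness of $M$ (Lemma \ref{loc16}) and $a\notin M$ give $c\in M$, so $b\rho_M 0\rho_M c$; the case $c\in M$ is symmetric.

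In the principal case $a,b,c\notin M$, the strategy is to combine witnesses extracted from $k$-maximality with the hypothesis $(ab)\rho_M(ac)$. Invoking $\mathcal{C}_k(M+\langle a\rangle)=S$, together with the identification of $k$-closure and saturated closure in the additively idempotent setting (Theorem \ref{saturated iff k-ideal}), $1\in\overline{M+\langle a\rangle}$ yields $m\in M$ and $s\in S$ with $1+m+as=m+as$. Multiplying this equation by $b$ and by $c$ produces $b+bm+abs=bm+abs$ and $c+cm+acs=cm+acs$. From $(ab)\rho_M(ac)$ I would extract $z_0\in M$ with $ab+z_0=ac+z_0$, hence $abs+z_0s=acs+z_0s$ after multiplication by $s$. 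A direct combinatorial manipulation --- adding $cm+z_0s$ (respectively, $bm+z_0s$) to the two preceding displayed equations and then substituting $abs+z_0s=acs+z_0s$ --- leads to
\begin{equation*}
    b+(z+acs)=z+acs=c+(z+acs),\qquad z:=bm+cm+z_0s\in M.
\end{equation*}

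The main obstacle is upgrading the resulting equality $b+(z+acs)=c+(z+acs)$ --- whose witness $z+acs$ generally escapes $M$, because the term $acs$ need not lie in $M$ --- to an honest $\rho_M$-witness, that is, an element of $M$. I would attack this by passing to the quotient $\bar S:=S/\rho_M$, which is $k$-simple by the $k$-maximality of $M$: the images satisfy $\bar b+\overline{acs}=\bar c+\overline{acs}$ with $\bar 1\leqslant \bar a\,\bar s$ in $\bar S$. Using the order-theoretic characterization of $k$-prime ideals (Theorem \ref{k-prime via natural partial order}) together with the structural constraints imposed by $k$-simplicity, the goal is to force the collapse $\bar b=\bar c$ in $\bar S$, equivalently $b\rho_M c$ in $S$. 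This last reduction --- compelling cancellation beneath a common upper bound in a $k$-simple additively idempotent semiring --- is the most delicate step, and it is where I expect the heart of the argument to lie.
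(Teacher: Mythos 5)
Your reduction stalls exactly where you say it does, and the gap is not a finishing detail but the whole theorem. Note first that the identity you arrive at is vacuous: as your own display records, both sides equal $z+acs$, so the equation only says that $b\leqslant z+acs$ and $c\leqslant z+acs$ --- and this is automatic, since $\bar 1\leqslant \overline{as}$ and $\overline{ab}=\overline{ac}$ already force $\bar b\leqslant \overline{abs}=\overline{acs}$ and $\bar c\leqslant\overline{acs}$ in the quotient. Hence no cancellation information has been produced. Worse, the deferred step cannot be completed from the data you have assembled ($k$-simplicity of the quotient, the order-theoretic characterizations, and a common upper bound): take the additively idempotent semiring of finite subsets of $\mathds{Z}$, with union as addition and elementwise sum as multiplication (the group semiring $B_1[\mathds{Z}]$). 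It is $k$-ideal simple, since any nonzero $k$-ideal contains some nonempty $A$, hence $\{-g\}A\ni 0$ for $g\in A$, hence $1=\{0\}$ by downward closedness. Yet with $a=\{0,1\}$, $b=\{0,1,2\}$, $c=\{0,2\}$ one has $ab=ac=\{0,1,2,3\}$, $\bar 1\leqslant \bar a$, and $b+ac=c+ac$, while $b\neq c$. So in a $k$-simple additively idempotent semiring, ``cancellation beneath a common upper bound'' simply fails, and no argument from $k$-simplicity and Theorems \ref{k-prime via natural partial order}--\ref{saturated iff k-ideal} can force $\bar b=\bar c$. (Your treatment of the degenerate cases $b\in M$ or $c\in M$ via Lemma \ref{loc16} is fine, but that is the routine part.)

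The paper's proof avoids elementwise manipulation altogether: from $M$ $k$-maximal it deduces that the Bourne congruence $K_M$ is a $k$-maximal congruence, hence a prime congruence (both proved earlier in the same subsection), and then imports the statement that such congruences are \emph{cancellative} from \cite[Theorem 2.12]{JooMincheva17}; cancellativity of $K_M$ is precisely the implication $abK_Mac\Rightarrow aK_M0$ or $bK_Mc$ that you are missing, after which translating $K_M$-witnesses into $\rho_M$-witnesses (adding $z_1+z_2$ to both sides) is immediate. In other words, the heart of the theorem is an external structural result about prime congruences of additively idempotent semirings, not something recoverable from $k$-maximality by the kind of witness-juggling in your main case; your proposal reproduces only the routine reductions and leaves that substantive input unproven.
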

 
 \begin{proof}
     Let $I$ be a $k$-maximal ideal. Then $I$ is a $k$-ideal. We need to show that $I$ is absolutely prime. Let $(ab)\rho_I(ac)$. Then $(ab)K_I(ac)$. Since $I$ is a $k$-maximal ideal, $K_I$ is a $k$-maximal congruence. This implies $K_I$ is a $k$-prime congruence, that is, a prime congruence. Hence, by \cite[Theorem 2.12]{JooMincheva17}, $K_I$ is cancellative, which further implies $bK_Ic$ or $aK_I0$. If $bK_Ic$, then $b+z_1=c+z_2$, for some $z_1$, $z_2\in I$ implies $b+z_1+z_1+z_2=c+z_2+z_1+z_2$, which gives $b+z_1+z_2=c+z_1+z_2$. Therefore, $b\rho_Ic$. If $aK_I0$, then $a\in I=\bar{I}$. 
 \end{proof}

\section*{Acknowledgement}
The first author expresses sincere thanks to the Council of Scientific and Industrial Research (CSIR), Government of India, for the financial support (File No. 09/0096(17468)/2024-EMR-I). The third author is grateful to the University Grants Commission (India) for providing a Senior Research Fellowship (ID:
211610013222/ Joint CSIR-UGC NET JUNE 2021).

\smallskip 

\end{document}